\numberwithin{equation}{section}
\newtheorem{Thm}{Theorem}[section]
\newtheorem{Prop}[Thm]{Proposition}
\newtheorem{Lem}[Thm]{Lemma}
\newtheorem{Cor}[Thm]{Corollary}
\theoremstyle{remark}
\newtheorem{Rem}[Thm]{Remark}
\theoremstyle{definition}
\newtheorem{Def}[Thm]{Definition}
\newtheorem{Exa}[Thm]{Example}
\newcommand{\mysection}[2]{%
\vspace{2mm}\section{\bf #1}\label{#2}
}
\newcommand{\fig}[1]
        {\raisebox{-0.5\height}
                 {\includegraphics{#1}}
        }
\def\Z{{\mathbb Z}}
\def\R{{\mathbb R}}
\def\Q{{\mathbb Q}}
\def\calA{\mathscr{A}}
\def\calD{\mathscr{D}}
\def\calM{\mathscr{M}}
\def\deg{\mathrm{deg}}
\newcommand{\mapright}[1]{
	\smash{\mathop{
		\hbox to 1cm{\rightarrowfill}}\limits^{#1}}}
\newcommand{\mapleft}[1]{
	\smash{\mathop{
		\hbox to 1cm{\leftarrowfill}}\limits^{#1}}}
\def\bcalM{\overline{\calM}}
\def\wgamma{\widetilde{\gamma}}
\def\ve{\varepsilon}
\def\bConf{\overline{C}}
\def\acalM{\calM^{\mathrm{AL}}}
\def\bacalM{\bcalM^{\mathrm{AL}}}
\def\wW{\widetilde{W}}
\def\wL{\widetilde{L}}
\def\wM{\widetilde{M}}
\def\wf{\widetilde{f}}
\def\wGamma{\widetilde{\Gamma}}
\def\bS{\overline{S}}
\def\bT{\overline{T}}
\def\bG{\overline{G}}
\def\bH{\overline{H}}
\def\wcalD{\widetilde{\calD}}
\def\wcalA{\widetilde{\calA}}
\def\twedge{\textstyle\bigwedge}
\def\irr{\mathrm{irr}}
\begin{document}

\title[Morse theory and Lescop's equivariant propagator]{Morse theory and Lescop's equivariant propagator for 3-manifolds with $b_1=1$ fibered over $S^1$}
\author{Tadayuki Watanabe}
\address{Department of Mathematics, Shimane University,
1060 Nishikawatsu-cho, Matsue-shi, Shimane 690-8504, Japan}
\email{tadayuki@riko.shimane-u.ac.jp}
\date{\today}
\subjclass[2000]{57M27, 57R57, 58D29, 58E05}

{\noindent\footnotesize {\rm Preprint} (2014)}\par\vspace{15mm}
\maketitle
\vspace{-6mm}
\setcounter{tocdepth}{1}
\begin{abstract}
For a 3-manifold $M$ with $b_1(M)=1$ fibered over $S^1$ and the fiberwise gradient $\xi$ of a fiberwise Morse function on $M$, we introduce the notion of amidakuji-like path (AL-path) in $M$. An AL-path is a piecewise smooth path on $M$ consisting of edges each of which is either a part of a critical locus of $\xi$ or a flow line of $-\xi$. Counting closed AL-paths with signs gives the Lefschetz zeta function of $M$. The ``moduli space" of AL-paths on $M$ gives explicitly Lescop's equivariant propagator, which can be used to define $\mathbb{Z}$-equivariant version of Chern--Simons perturbation theory for $M$.
\end{abstract}
\par\vspace{3mm}

\def\baselinestretch{1.07}\small\normalsize


\mysection{Introduction}{s:intro}

Chern--Simons perturbation theory for 3-manifolds was developed independently by Axelrod--Singer (\cite{AS}) and by Kontsevich (\cite{Ko}). It is defined by integrations over suitably compactified configuration spaces $\bConf_{2n,\infty}(M)$ of a closed 3-manifold $M$ and gives a strong invariant $Z(M)$ of $M$ whose universal formula is a formal series of Feynman diagrams (e.g. \cite{KT, Les1}). In the definition of $Z$, propagator plays an important role. Here, a propagator is a certain closed 2-form on $\bConf_{2,\infty}(M)$, which corresponds to an edge in a Feynman diagram (see \cite{AS, Ko} for the definition of propagator, and \cite{Les1} for a detailed exposition). The Poincar\'{e}--Lefschetz dual to a propagator is given by a relative 4-cycle in $(\bConf_{2,\infty}(M),\partial \bConf_{2,\infty}(M))$. In a dual perspective, given three parallels $P_1,P_2,P_3$ of such a 4-cycle, the algebraic triple intersection number $\#P_1\cap P_2\cap P_3$ in the 6-manifold $\bConf_{2,\infty}(M)$ gives rise to the 2-loop part of $Z$, which corresponds to the $\Theta$-shaped Feynman diagram. For general 3-valent graphs with $2n$ vertices, the intersections of codimension 2 cycles in $\bConf_{2n,\infty}(M)$ give rise to invariants of $M$.

Propagator may not exist depending on the topology of $M$. For a propagator with $\Q$ coefficients to exist, $M$ must be a $\Q$ homology 3-sphere. If $M$ is a closed 3-manifold with $b_1(M)>0$, one must improve the method to find universal perturbative invariant for $M$ whose value is a formal series of Feynman diagrams, which is not classical. After Ohtsuki's pioneering work that refines the LMO invariant significantly (\cite{Oh1, Oh2}), Lescop gave a topological construction of an invariant of $M$ with $b_1(M)=1$ for the 2-loop graph using configuration spaces. More precisely, she defined in \cite{Les2} a topological invariant of $M$ by using the equivariant triple intersection of ``equivariant propagators'' in the ``equivariant configuration space'' $\bConf_2(\wM)_\Z$ of $M$. The equivariant configuration space $\bConf_2(\wM)_\Z$ is an infinite cyclic covering of the compactified configuration space $\bConf_2(M)$. An equivariant propagator is defined as a relative 4-cycle in $(\bConf_2(\wM)_\Z,\partial\bConf_2(\wM)_\Z)$ with coefficients in $\Q(t)$ satisfying a certain boundary condition, which is described by a rational function including the Alexander polynomial of $M$ (\cite[Theorem~4.8]{Les2} or Theorem~\ref{thm:les2} below). She proved that some equivalence class of the equivariant triple intersection of three equivariant propagators gives rise to an invariant of $M$. Note that the existence of an equivariant propagator satisfying an explicit boundary condition is proved in \cite{Les2}, whereas globally explicit cycle is not referred to except for the case $M=S^2\times S^1$. 

In this paper, we introduce the notion of amidakuji-like path (AL-path for short) in a 3-manifold $M$ with $b_1(M)=1$ fibered over $S^1$ (Definition~\ref{def:al-path}) and we construct Lescop's equivariant propagator explicitly as the chain given by the moduli space of AL-paths in $M$ (Theorem~\ref{thm:main}). In proving the main Theorem~\ref{thm:main}, we show that the counts of closed AL-paths in $M$ give the Lefschetz zeta function of the fibration $M$ (Proposition~\ref{prop:gamma_zeta}). In a sense, our construction gives a geometric derivation of the formula for Lescop's boundary condition.

AL-path is in a sense a piecewise smooth approximation of integral curve of a nonsingular vector field on $M$ (see Figure~\ref{fig:converge_curve}). Let $\xi$ be the gradient along the fibers of a fiberwise Morse function (\S\ref{ss:fMF}) of $M$. Roughly speaking, an AL-path in $M$ is a piecewise smooth path in $M$ that is an alternating concatenation of horizontal segments and vertical segments, where a horizontal segment is a part of a flow line of $-\xi$ and a vertical segment is a part of a critical locus of $\xi$ both descending. 

Explicit propagator is good for finding explicitly computable invariant of 3-manifolds. Inspired by the ideas of \cite{Fu, Wa1} for construction of graph-counting invariants for homology 3-spheres, we obtain a candidate for equivariant version of the Chern--Simons perturbation theory for 3-manifolds $M$ with $b_1(M)=1$ fibered over $S^1$, by counting graphs in $M$ each of whose edges is an AL-path for a fiberwise gradient. We will write about it in \cite{Wa2}. We believe that our construction can be extended to 3-manifolds with arbitrary first Betti numbers and to generic closed 1-forms, generic in the sense of \cite{Hu}, by using a method similar to that of Pajitnov in \cite{Pa1, Pa2}. 

\subsection{Conventions}
In this paper, manifolds and maps between them are assumed to be smooth. By an $n$-dimensional {\it chain} in a manifold $X$, we mean a finite linear combination of smooth maps from oriented compact $n$-manifolds with corners to $X$. We understand a chain as a chain of smooth simplices by taking triangulations of manifolds. We follow \cite[Appendix]{BT} for the conventions for manifolds with corners and fiber products of manifolds with corners. Some definitions needed are summarized in Appendix~\ref{s:mfd_corners}. We represent an orientation $o(X)$ of a manifold $X$ by a non-vanishing section of $\twedge^{\dim{X}} T^*X$. We consider a coorientation $o^*(V)$ of a submanifold $V$ of a manifold $X$ as an orientation of the normal bundle of $V$ and represent it by a differential form in $\Gamma^\infty(\twedge^\bullet T^*X|_{V})$. We identify the normal bundle $N_V$ with the orthogonal complement $TV^\perp$ in $TX$, by taking a Riemannian metric on $X$. We fix orientation or coorientation of $V$ so that the identity
\[ o(V)\wedge o^*(V)\sim o(X) \]
holds, where we say that two orientations $o$ and $o'$ are equivalent ($o\sim o'$) if they are related by multiple of a positive function. $o(V)$ determines $o^*(V)$ up to equivalence and vice versa. We orient boundaries of an oriented manifold by the inward normal first convention.

\subsection{Lefschetz zeta function}

We shall recall a few definitions and notations before stating the main result. Let $\Sigma$ be a closed manifold. For a diffeomorphism $\varphi:\Sigma\to \Sigma$, its {\it Lefschetz zeta function} $\zeta_\varphi(t)$ is defined by the formula
\[ \zeta_\varphi(t)=\exp\left(\sum_{k=0}^\infty \frac{L(\varphi^k)}{k}t^k\right)\in\Q[[t]], \]
where $L(\varphi^k)$ is the Lefschetz number of the iteration $\varphi^k$, or the count of fixed points of $\varphi^k$ counted with appropriate signs. The following product formula is a consequence of the Lefschetz trace formula.
\begin{equation}\label{eq:zeta_prod}
 \zeta_\varphi(t)=\prod_{i=0}^{\dim{\Sigma}}\det(1-t\varphi_{*i})^{(-1)^{i+1}}, 
\end{equation}
where $\varphi_{*i}:H_i(\Sigma;\Q)\to H_i(\Sigma;\Q)$ is the induced map from $\varphi$. See e.g. \cite[9.2.1]{Pa2}. In this paper, we will often consider the logarithmic derivative of $\zeta_\varphi(t)$. One has
\begin{equation}\label{eq:dlog_zeta}
 \frac{d}{dt}\log{\zeta_\varphi(t)}=\frac{\zeta_\varphi'(t)}{\zeta_\varphi(t)}
=\sum_{i=0}^{\dim{\Sigma}}(-1)^i\mathrm{Tr}\frac{\varphi_{*i}}{1-t\varphi_{*i}}.
\end{equation}

\subsection{Equivariant configuration spaces}

We recall some definitions from \cite{Les2}. Let $M$ be a closed oriented Riemannian 3-manifold with $b_1(M)=1$, let $\kappa:M\to S^1$ be a map that induces an isomorphism $H_1(M)/\mathrm{Torsion}\to H_1(S^1)$ and let $\wM$ be its standard infinite cyclic covering. Let $\pi:\wM\to M$ be the covering projection. Let $\widetilde{\kappa}:\wM\to \R$ be the lift of $\kappa$ and let $t:\wM\to \wM$ be the diffeomorphism that generate the group of covering transformations and that satisfies for every $x\in\wM$,
\[ \widetilde{\kappa}(tx)=\widetilde{\kappa}(x)-1. \]
Let $\wM\times_\Z\wM$ be the quotient of $\wM\times \wM$ by the equivalence relation that identifies $x\times y$ with $tx\times ty$. We denote the equivalence class of $x\times y$ by $x\times_\Z y$. The natural map $\bar{\pi}:\wM\times_\Z\wM\to M\times M$ is an infinite cyclic covering. By abuse of notation, we denote by $t$ the generator of the group of covering transformations of $\wM\times_\Z\wM$ that acts as follows.
\[ t(x\times_\Z y)=(t^{-1}x)\times_\Z y=x\times_\Z (ty). \]

Let $\Delta_M$ be the diagonal in $M\times M$. The compactified configuration space $\bConf_2(M)$ is the compactification of $M\times M\setminus\Delta_M$ that is obtained from $M\times M$ by blowing-up $\Delta_M$. See Appendix~\ref{s:blow-up} for the definition of blow-up. Roughly, the blow-up replaces $\Delta_M$ with its normal sphere bundle. The boundary $\partial\bConf_2(M)$ is canonically identified with the unit tangent bundle $ST(M)$ of $M$. More precisely, let $N_{\Delta_M}$ be the total space of the normal bundle of $\Delta_M$ in $M\times M$. We fix a framing $\tau:TM\to \R^3\times M$. The framing of $M$ induces an isomorphism
\begin{equation}\label{eq:triv_normal}
 \phi:N_{\Delta_M}\to \R^3 \times \Delta_M
\end{equation}
of oriented vector bundles, which sends the fiber $(T_{(x,x)}\Delta_M)^\perp$ of the normal bundle to $\R^3\times (x,x)$. Then $\phi$ induces a diffeomorphism $B\ell_0(N_{\Delta_M})\to B\ell_0(\R^3)\times \Delta_M$. Under this diffeomorphism, the boundary of $B\ell_0(N_{\Delta_M})$ corresponds to $\partial B\ell_0(\R^3)\times \Delta_M=S^2\times \Delta_M\approx S^2\times M$. We denote $\phi^{-1}(S^2\times \Delta_M)$ by $ST(M)$. Note that the blowing-up does not depend on the choice of $\tau$.

Let $\widetilde{\Delta}_M=\bar{\pi}^{-1}(\Delta_M)$. The {\it equivariant configuration space} $\bConf_2(\wM)_\Z$ is defined by
\[ \bConf_2(\wM)_\Z=B\ell_{\widetilde{\Delta}_M}(\wM\times_\Z\wM),\]
the blow-up of $\wM\times_\Z\wM$ along $\widetilde{\Delta}_M$. The boundary of $\bConf_2(\wM)_\Z$ is canonically identified with $\Z\times ST(M)=\coprod_{i\in\Z}t^iST(M)$.

\subsection{Lescop's equivariant propagator}

Let $K$ be an oriented knot in $M$ such that $\langle[d\kappa], [K]\rangle=-1$. Let $\Lambda=\Q[t,t^{-1}]$ and let $\Q(t)$ be the field of fractions of $\Lambda$. Then $H_*(\bConf_2(\wM)_\Z)$ is naturally a graded $\Lambda$-module. 
\begin{Thm}[Lescop \cite{Les2}] For any $i\in\Z$, 
\[ H_i(\bConf_2(\wM)_\Z)\otimes_\Lambda \Q(t)\cong H_{i-2}(M;\Q)\otimes_\Q \Q(t). \]
\[\begin{split}
&H_3(\bConf_2(\wM)_\Z)\otimes_\Lambda \Q(t)=\Q(t)[ST(K)],\\
&H_2(\bConf_2(\wM)_\Z)\otimes_\Lambda \Q(t)=\Q(t)[ST(*)],
\end{split}
\]
where $ST(K)$ is the restriction of the $S^2$-bundle $ST(M)$ on $K$.
\end{Thm}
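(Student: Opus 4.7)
The plan is to replace $\bConf_2(\wM)_\Z$ by the homotopy equivalent complement $\wM\times_\Z\wM\setminus\widetilde{\Delta}_M$ (since the blow-down is a diffeomorphism off the boundary $S^2$-bundle) and to compare this with the bulk $\wM\times_\Z\wM$ via the long exact sequence of the pair. The normal bundle of $\widetilde{\Delta}_M$ is the pullback of the oriented rank-$3$ bundle $N_{\Delta_M}\cong TM$, so excision plus the Thom isomorphism gives $H_{i+1}(\wM\times_\Z\wM,\,\wM\times_\Z\wM\setminus\widetilde{\Delta}_M;\Q)\cong H_{i-2}(\widetilde{\Delta}_M;\Q)$. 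Moreover, the covering $\bar\pi$ is trivial over $\Delta_M$ because every loop $\alpha$ in the diagonal satisfies $\kappa_*\alpha-\kappa_*\alpha=0$, hence $\widetilde{\Delta}_M\cong\coprod_{n\in\Z}\Delta_M$ with $t$ shifting the copies, so
\[ H_*(\widetilde{\Delta}_M;\Q)\otimes_\Lambda\Q(t)\cong H_*(M;\Q)\otimes_\Q\Q(t). \]

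The heart of the argument is the vanishing
\[ H_*(\wM\times_\Z\wM;\Q)\otimes_\Lambda\Q(t)=0, \]
which makes the long exact sequence collapse into the sought isomorphism. I would prove this via the Serre spectral sequence of the fiber bundle $\wM\to\wM\times_\Z\wM\xrightarrow{p}M$ with $p([\tilde x,\tilde y])=\pi(\tilde x)$; the monodromy around a loop $\gamma\subset M$ with $\kappa_*[\gamma]=n$ acts on the fiber $\wM$ by the deck transformation $t^n$, so the local coefficient system $H_q(\wM;\Q)$ on $M$ is compatible with the ambient $\Lambda$-action on $\wM\times_\Z\wM$. Since $\Q(t)$ is flat over $\Lambda$, it suffices to show that each $H_q(\wM;\Q)$ is $\Lambda$-torsion. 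In the fibered case $\wM\simeq\Sigma$, so each $H_q(\wM;\Q)=H_q(\Sigma;\Q)$ is finite-dimensional over $\Q$ and hence $\Lambda$-torsion; for a general closed $M$ with $b_1(M)=1$, one combines standard Alexander-module theory ($H_0=\Lambda/(t-1)$; $H_1$ is torsion because the rational Alexander polynomial is nonzero) with Poincar\'e duality and $\chi(M)=0$ to handle $H_2$, while $H_{\geq 3}(\wM;\Q)=0$ because $\wM$ is a non-compact $3$-manifold.

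Combining the vanishing with the Thom identification, the long exact sequence tensored with $\Q(t)$ yields $H_i(\bConf_2(\wM)_\Z)\otimes_\Lambda\Q(t)\cong H_{i-2}(M;\Q)\otimes_\Q\Q(t)$. To identify the generators in degrees $2$ and $3$, I would trace the inverse Thom map composed with the connecting homomorphism: an $(i-2)$-cycle $\sigma\subset\widetilde{\Delta}_M$ is sent to the boundary sphere bundle $ST(\sigma)\subset\partial\bConf_2(\wM)_\Z$ over $\sigma$. Taking $\sigma$ to be a point on the $0$-th copy of $\Delta_M\cong M$ yields $[ST(*)]$ in degree $2$; taking $\sigma=K$ on the $0$-th copy, with the condition $\langle[d\kappa],[K]\rangle=-1$ ensuring that $[K]$ generates $H_1(M;\Z)/\mathrm{Torsion}$, yields $[ST(K)]$ in degree $3$.

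The main technical obstacle is the vanishing step: one must carefully bookkeep the $\Lambda$-equivariance of the Serre spectral sequence and of its local coefficient system. A possibly cleaner alternative is to use the $\Z$-cover $\wM\times\wM\to\wM\times_\Z\wM$ (via the diagonal deck action) together with the K\"unneth decomposition $H_*(\wM\times\wM;\Q)=H_*(\wM;\Q)\otimes_\Q H_*(\wM;\Q)$, reducing the claim to the same $\Lambda$-torsion property of $H_*(\wM;\Q)$.
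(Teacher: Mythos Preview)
The paper does not prove this theorem: it is quoted from Lescop \cite{Les2} and stated without argument, so there is no ``paper's own proof'' to compare against here.

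That said, your outline is the standard route and is essentially correct. Two small remarks. First, the phrase ``inverse Thom map composed with the connecting homomorphism'' is backwards: what you actually use is the Thom isomorphism $H_{i-2}(\widetilde{\Delta}_M)\stackrel{\cong}{\to} H_{i+1}(\wM\times_\Z\wM,\wM\times_\Z\wM\setminus\widetilde{\Delta}_M)$ followed by the boundary map $\partial$ of the long exact sequence, and the composite indeed sends a cycle $\sigma$ to the sphere bundle $ST(\sigma)$; your identification of the generators is correct. Second, in the Serre spectral sequence step you should make explicit that the deck transformation $t$ on $\wM\times_\Z\wM$ preserves each fiber of $p$ and acts there as the deck transformation $t$ on $\wM$; hence on $E_2^{p,q}=H_p(M;\underline{H_q(\wM;\Q)})$ the $\Lambda$-action is through the coefficient module. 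In the fibered case each $H_q(\wM;\Q)\cong H_q(\Sigma;\Q)$ is finite-dimensional over $\Q$, so it is annihilated by the characteristic polynomial of $t$, and therefore every $E_2^{p,q}$, every $E_\infty^{p,q}$, and finally $H_*(\wM\times_\Z\wM;\Q)$ is $\Lambda$-torsion. With that spelled out, your argument goes through.
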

Consider the exact sequence
\[ H_4(\bConf_2(\wM)_\Z,\partial\bConf_2(\wM)_\Z)\otimes_\Lambda\Q(t)
\stackrel{\partial}{\to} H_3(\partial\bConf_2(\wM)_\Z)\otimes_\Lambda\Q(t)
\stackrel{i_*}{\to} H_3(\bConf_2(\wM)_\Z)\otimes_\Lambda\Q(t),
\]
where $i_*$ is the map induced by the inclusion\footnote{Since $\Q(t)$ is a torsion-free $\Lambda$-module, one has the isomorphism $H_i(C_*(X)\otimes_\Lambda\Q(t))\cong H_i(X)\otimes_\Lambda\Q(t)$ of $\Lambda$-modules for any $\Z$-space $X$, by the universal coefficient theorem.}. 

\begin{Thm}[Lescop \cite{Les2}]\label{thm:les2}
Let $\tau:TM\to \R^3\times M$ be a trivialization of $TM$ and let $s_\tau:M\to ST(M)$ be a section induced by $\tau$ that sends $M$ to $\{v\}\times M$ for a fixed $v\in S^2$. Suppose that $s_\tau|_K$ agrees with the unit tangent vectors of $K$. Then\footnote{The sign in the formula (\ref{eq:les2}) seems different from that of \cite{Les2}. This is because the homological action $t$ of the knot in \cite{Les2} is our $t^{-1}$. Note that 
\[ \frac{1+t^{-1}}{1-t^{-1}}+\frac{t^{-1}\Delta'(M)(t^{-1})}{\Delta(M)(t^{-1})}=
-\left(\frac{1+t}{1-t}+\frac{t\Delta'(M)}{\Delta(M)}\right)\]}
\begin{equation}\label{eq:les2}
 i_*[s_{\tau}(M)]=-\left(\frac{1+t}{1-t}+\frac{t\Delta'(M)}{\Delta(M)}\right)i_*[ST(K)] \end{equation}
in $H_3(\bConf_2(\wM)_\Z)\otimes_\Lambda\Q(t)$, where $\Delta(M)$ is the Alexander polynomial of $M$ normalized so that $\Delta(M)(1)=1$ and $\Delta(M)(t^{-1})=\Delta(M)(t)$. Hence, there exists a 4-dimensional $\Q(t)$-chain $Q$ such that
\[ \partial Q=s_{\tau}(M)+\left(\frac{1+t}{1-t}+\frac{t\Delta'(M)}{\Delta(M)}\right)ST(K). \]
\end{Thm}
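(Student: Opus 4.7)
The plan is to exploit the rank-one structure of $H_3(\bConf_2(\wM)_\Z)\otimes_\Lambda \Q(t)$ recalled just before the statement: since this group equals $\Q(t)[ST(K)]$, there is a unique $\alpha(t)\in\Q(t)$ with $i_*[s_\tau(M)]=\alpha(t)\,i_*[ST(K)]$, and the task reduces to identifying $\alpha(t)$ with $-\bigl(\tfrac{1+t}{1-t}+\tfrac{t\Delta'(M)}{\Delta(M)}\bigr)$. Once $\alpha(t)$ is pinned down, the existence of the $4$-chain $Q$ is immediate: by (\ref{eq:les2}) the cycle $s_\tau(M)+\bigl(\tfrac{1+t}{1-t}+\tfrac{t\Delta'(M)}{\Delta(M)}\bigr)ST(K)$ has vanishing image in $H_3(\bConf_2(\wM)_\Z)\otimes\Q(t)$, hence lies in the image of the connecting homomorphism $\partial$ of the pair $(\bConf_2(\wM)_\Z,\partial\bConf_2(\wM)_\Z)$.

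To extract $\alpha(t)$ I would construct an equivariant $3$-cycle $V$ in $\bConf_2(\wM)_\Z$ whose equivariant intersection pairing with $ST(K)$ is a computable, nonzero element of $\Q(t)$; a convenient choice is (a lift to the blow-up of) a chain of the form $\widetilde{F}\times_\Z \wM$, where $\widetilde{F}$ is a lift to $\wM$ of a surface in $M$ meeting $K$ transversely once. Pairing $V$ against $s_\tau(M)$ then produces $\alpha(t)$ times the normalization $\langle V,ST(K)\rangle$, so $\alpha(t)$ is recovered as a ratio of two equivariant intersection numbers.

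Next I would split $\alpha(t)$ into a local and a global contribution. The local part comes from intersections concentrated near the blown-up diagonal: using the hypothesis that $s_\tau|_K$ coincides with the unit tangent to $K$, the section $s_\tau(M)\subset \partial\bConf_2(\wM)_\Z$ meets a small tubular neighborhood of the lifted diagonal in a way that can be enumerated over the deck translates $\{t^n\}_{n\in\Z}$, and a bookkeeping of signs produces the geometric series whose sum is $-\tfrac{1+t}{1-t}$. The global part comes from intersections in the interior of $\bConf_2(\wM)_\Z$, and is governed by the equivariant linking form of push-offs of $K$ in $\wM$; using the identification of this form with the Reidemeister–Alexander invariant of $M$ (equivalently, the trace generating series $\sum_{n\ge 1}\operatorname{Tr}(t_{*1}^n\mid H_1(\wM;\Q))\,t^n$ expressed as a logarithmic derivative, in analogy with (\ref{eq:dlog_zeta})), this contribution becomes $-\tfrac{t\Delta'(M)}{\Delta(M)}$.

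The hardest step, I expect, is this last identification of the global contribution with $-\tfrac{t\Delta'(M)}{\Delta(M)}$: one must convert an equivariant intersection count, a priori defined only up to the ambiguity of the chosen chain $V$ and the trivialization $\tau$, into the logarithmic derivative of the Alexander polynomial. This requires the torsion-theoretic interpretation of $\Delta(M)$ as the order of the $\Lambda$-module $H_1(\wM;\Q)$ together with a careful trace/Lefschetz-type identity in the equivariant setting, and it is here that the precise normalization $\Delta(M)(1)=1$ and the self-conjugacy $\Delta(M)(t^{-1})=\Delta(M)(t)$ enter to rigidify the answer.
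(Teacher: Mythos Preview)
The paper does not prove Theorem~\ref{thm:les2} at all: it is quoted verbatim from Lescop's paper \cite{Les2} as background, with no argument supplied. So there is no ``paper's own proof'' to compare your proposal against. What the present paper does instead is give, for the special case where $M$ is fibered over $S^1$, an independent construction of an explicit chain $Q(\wf)$ (the moduli space of AL-paths) and compute its boundary directly (Theorem~\ref{thm:1}, Corollary~\ref{cor:1}); the author then remarks that via $\Delta(M)=t^{-g(\Sigma)}\det(1-t\varphi_{*1})$ and \cite[Proposition~4.5]{Les2} this recovers Lescop's formula. That route bypasses the homological computation you outline entirely: no test cycle $V$, no equivariant intersection pairing, no local/global splitting---instead one builds the bounding $4$-chain by hand and reads off $\partial Q$ as $s_\xi^*(M)$ plus a sum over AL-cycles whose generating function is identified with $t\zeta_\varphi'/\zeta_\varphi$ by a direct trace argument (Proposition~\ref{prop:gamma_zeta}).

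As for your sketch on its own merits: the overall strategy (rank one, extract $\alpha(t)$ by pairing, split into diagonal and off-diagonal pieces) is reasonable and is indeed in the spirit of \cite{Les2}. But two points are vague to the degree of being gaps. First, $s_\tau(M)$ lives in $\partial\bConf_2(\wM)_\Z$, not in the interior, so ``pairing $V$ against $s_\tau(M)$'' as an intersection in $\bConf_2(\wM)_\Z$ does not make sense as stated; you need to work with $i_*[s_\tau(M)]$ in $H_3$ of the total space, and the actual mechanism in \cite{Les2} is rather to compute boundaries of explicit $4$-chains (pushed-off diagonals, preimages of surfaces) and compare them, not to intersect $3$-cycles. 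Second, your ``local contribution $-\tfrac{1+t}{1-t}$'' cannot arise from a geometric series over deck translates of a neighborhood of the diagonal, since $s_\tau(M)$ sits only in the $t^0$-component of $\partial\bConf_2(\wM)_\Z=\coprod_i t^i ST(M)$; the $\tfrac{1+t}{1-t}$ term in \cite{Les2} comes instead from the explicit $4$-chain $K\times_\Z\wM + \wM\times_\Z K$ and its interaction with the diagonal, which is a different bookkeeping than you suggest.
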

Lescop calls such a $\Q(t)$-chain $Q$ an {\it equivariant propagator}. The equivariant intersection pairing with $Q$ detects all classes in $H_2(\bConf_2(\wM)_\Z)\otimes_\Lambda\Q(t)=\Q(t)[ST(*)]$. More generally, we will call a 4-dimensional relative $\Q(t)$-cycle $Q$ in $(\bConf_2(\wM)_\Z,\partial \bConf_2(\wM)_\Z)$ such that the boundary condition is satisfied in the homology an equivariant propagator.

\subsection{Fiberwise Morse function}\label{ss:fMF}

In the rest of this paper, let $M$ be a closed oriented Riemannian 3-manifold with $b_1(M)=1$ fibered over $S^1$ and let $\kappa:M\to S^1$ be the projection of the fibration. Suppose that fiber of $\kappa$ is path-connected. A {\it fiberwise Morse function} is a $C^\infty$ function $f:M\to \R$ whose restriction $f_s=f|_{\kappa^{-1}(s)}:\kappa^{-1}(s)\to \R$ is Morse for all $s\in S^1$. A {\it generalized Morse function (GMF)} is a $C^\infty$ function on a manifold with only Morse or birth-death singularities (\cite[Appendix]{Ig1}). A {\it fiberwise GMF} is a $C^\infty$ function $f:M\to \R$ whose restriction $f_s:\kappa^{-1}(s)\to \R$ is a GMF for all $s\in S^1$. A {\it critical locus} of a fiberwise GMF is the subset of $M$ consisting of critical points of $f_s$, $s\in S^1$. We will need an {\it oriented} fiberwise GMF, where a fiberwise GMF is oriented if the bundles of negative eigenspaces of the Hessians along the fibers over all the critical loci are oriented and if each birth-death pair near a birth-death locus has incidence number $1$. 

For a critical locus $p$ of the fiberwise gradient $\xi$ of a fiberwise GMF $f$, we denote by $\wcalD_p=\wcalD_p(\xi)$ and $\wcalA_p=\wcalA_p(\xi)$ the descending manifold loci and the ascending manifold loci respectively. If a pair of different critical loci $p,q$ is such that $\mathrm{ind}\,p=\mathrm{ind}\,q=1$ and if $\xi$ is generic, then $\wcalD_p$ and $\wcalA_q$ may intersect transversally at finitely many values of $\kappa$. The intersection of $\wcalD_p$ and $\wcalA_q$ is then a flow line along $\xi$ between $p$ and $q$. Such an intersection is called a {\it $1/1$-intersection} (generally, $i/j$-intersection \cite{HW}).

\begin{Prop}
There exists an oriented fiberwise Morse function $f:M\to \R$ for the fibration $\kappa:M\to S^1$.
\end{Prop}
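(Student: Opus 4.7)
The plan is to realize $M$ as a mapping torus, build a loop of Morse functions on the fiber, and then locally correct the orientations of the negative eigenspace bundles.

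Since $\kappa:M\to S^1$ is a smooth bundle with closed connected fiber, $M\cong\Sigma\times[0,1]/(x,0)\sim(\varphi(x),1)$ for some orientation-preserving diffeomorphism $\varphi$ of a closed connected oriented surface $\Sigma$ (orientability of $\Sigma$ following from that of $M$ and triviality of the normal bundle of a fiber). Fix any Morse function $h_0:\Sigma\to\R$ with pairwise distinct critical values; then $h_1:=h_0\circ\varphi^{-1}$ is Morse with the same count of critical points of each index. The main step is to produce a smooth path $\{h_s\}_{s\in[0,1]}$ of Morse functions joining $h_0$ and $h_1$. Once this is available, $\tilde f(x,s):=h_s(x)$ is a smooth function on $\Sigma\times[0,1]$ and the identification $h_1=h_0\circ\varphi^{-1}$ forces it to descend to a fiberwise Morse function $f:M\to\R$.

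To construct the path, I would first pick an ambient isotopy $\psi_s$ of $\Sigma$ with $\psi_0=\mathrm{id}$ that carries the critical set of $h_0$ onto that of $h_1$ while matching indices (possible because $\Diff(\Sigma)$ acts transitively on finite labeled configurations in a connected surface), and set $h_s:=h_0\circ\psi_{2s}^{-1}$ for $s\in[0,\tfrac{1}{2}]$. On $s\in[\tfrac{1}{2},1]$, interpolate $h_{1/2}$ and $h_1$ in Morse charts around each now-coinciding critical point: both local models are non-degenerate quadratic forms of the same index, and non-degenerate quadratic forms of fixed index form a path-connected space, so the Hessians can be deformed through non-degenerate ones. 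A partition of unity glues these local deformations into a globally Morse family that avoids the codimension-one birth-death locus entirely.

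For orientability, observe that the critical locus of the fiberwise gradient $\xi$ is a finite disjoint union of smoothly embedded circles $C\subset M$. Over $C$ the negative eigenspace bundle $E^-$ has rank equal to $\ind C$; in ranks $0$ and $2$ it is automatically orientable (in rank $2$ it coincides with the pull-back of the oriented tangent bundle of $\Sigma$ to $C$). In rank $1$ the bundle may be the M\"obius line bundle over $S^1$, in which case I would modify $f$ inside a tubular neighborhood of one point of $C$, replacing a local model $-\xi_1^2+\xi_2^2+g(s)$ by $-\cos 2\theta(s)\,(\xi_1^2-\xi_2^2)-2\sin 2\theta(s)\,\xi_1\xi_2+g(s)$ for a smooth $\theta$ ramping from $0$ to $\pi$. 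The Hessian remains non-degenerate of index $1$ and the critical locus is unchanged, but the negative eigenvector rotates by an extra $\pi$, trivializing $E^-$. Finitely many such corrections produce the desired oriented fiberwise Morse function. The main obstacle is the Morse interpolation: a generic path in $C^\infty(\Sigma)$ crosses the codimension-one birth-death stratum, but the matching Morse counts of $h_0$ and $h_0\circ\varphi^{-1}$ allow a purely Morse path, and verifying that the local Hessian interpolations patch into a globally Morse family without accidental degeneracy is the delicate technical point.
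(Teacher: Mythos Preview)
Your approach differs from the paper's, and the main gap is in the interpolation on $[\tfrac{1}{2},1]$. You assert that two Morse functions $h_{1/2}$ and $h_1$ on $\Sigma$ with identical critical sets and indices can be joined through Morse functions by interpolating Hessians in Morse charts and gluing with a partition of unity. But this gluing can create new critical points: on the complement of the Morse charts both $dh_{1/2}$ and $dh_1$ are non-vanishing, yet their convex combinations need not be (they can point in opposite directions at some point), and the same failure occurs in the cutoff transition regions. The fact that $h_0$ and $h_0\circ\varphi^{-1}$ have matching Morse counts only guarantees that births and deaths in a generic path come in cancelling \emph{pairs}; it does not let you avoid the birth--death stratum altogether. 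Your final sentence correctly identifies this as ``the delicate technical point,'' but it is in fact the entire content of the proposition. The paper handles it by starting from a fiberwise GMF (Cerf/Igusa), then using Cerf's Beak lemma to pair up the birth--death points and Igusa's Birth--death cancellation lemma to eliminate each pair; resolving your gap would amount to reproducing this argument.

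A secondary issue: your orientation fix for an index-$1$ locus replaces $-\xi_1^2+\xi_2^2$ by the rotated form $q_{\theta(s)}$ with $\theta$ ramping from $0$ to $\pi$, but for intermediate $s$ this does not agree with the original on the boundary of the Morse chart in the $\xi$-direction, so a radial cutoff is required. Near $\theta=\pi/2$ the rotated and original gradients are opposite, so the cutoff can again produce spurious critical points. This is likely repairable, but not by the argument given.
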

\begin{proof}
\begin{figure}
\fig{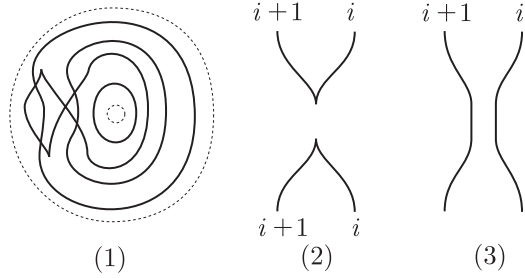}
\caption{Cerf's graphic and birth-death cancellation.}\label{fig:graphic}
\end{figure}
According to Cerf \cite[Ch~I.3]{Ce} or the Framed function theorem of K.~Igusa \cite[Theorem~1.6]{Ig1} (see also \cite[Theorem~4.6.3]{Ig2}), there exists a fiberwise GMF $f:M\to \R$. The graph of critical values of $f_s$ forms a diagram in $\R\times S^1$ (Cerf's {\it graphic}). See Figure~\ref{fig:graphic} (1) for an example. In a graphic, Morse critical loci correspond to arcs and birth-death singularities correspond to beaks. 

If there is a pair of beaks as in Figure~\ref{fig:graphic} (2), we may apply the Birth-death cancellation lemma \cite[Proposition A.2.3]{Ig1} of K.~Igusa to eliminate the pair of beaks by deforming $f$ within the space of smooth functions on $M$ to a fiberwise GMF with less birth-death points, as follows. Let $J=(c,d)\subset S^1$ be a small interval such that a pair $(v_1,v_2)$ of birth-death points as in Figure~\ref{fig:graphic} (2) is included in $\kappa^{-1}(J)$. Suppose that there are no $1/1$-intersections in $\kappa^{-1}(J)$ and that $v_1$ (resp. $v_2$) is a death point (resp. birth point) of index $(0,1)$. The case of birth-death points of index $(1,2)$ is symmetric to this case. The ascending manifold $\calA_{v_i}(\xi)$ is a half-disk with origin $v_i$. We may assume that both $f(v_1)$ and $f(v_2)$ are less than the values of critical loci of index 1. By the sliding technique used in the proof of \cite[Lemma~6.1]{HW}, we may assume that both the ascending manifolds $\calA_{v_1}(\xi)$ and $\calA_{v_2}(\xi)$ do not intersect descending manifolds of other critical points of index 1 and that both $\calA_{v_1}(\xi)$ and $\calA_{v_2}(\xi)$ do not intersect descending manifolds of critical loci of index 2 except for one common critical locus $p$ of index 2. There is a regular level surface locus $T\subset \kappa^{-1}(J)$ of $f$ that lie just above $v_1$ and $v_2$, and there is an arc $c$ in $T$ such that 
\begin{enumerate}
\item $\kappa|_c:c\to J$ is a submersion,
\item $c$ is included in $\wcalD_{p}(\xi)\cap T$,
\item the 1-disks $\calA_{v_1}(\xi)\cap T$ and $\calA_{v_2}(\xi)\cap T$ are included in a small neighborhood $U$ of $c$ in $T$.
\end{enumerate}
Then we may find a smoothly embedded arc $c'$ in a small neighborhood of $T$ in $\kappa^{-1}(J)$ connecting $v_1$ and $v_2$, most of which is included in $U$ and such that $\kappa|_{c'}$ is a submersion. Along this arc, the Birth-death cancellation lemma can be achieved. Here, for orientability, one may need to slide the descending or ascending manifold of $v_1$ or $v_2$ before the surgery so that the orientations become compatible. For example, if $v_1$ is a death point of index $(1,2)$, we may assume, after an appropriate shift, that $f(v_1)$ is greater than the values of $f$ of all the critical loci of index 1 in the $\kappa$-level of $v_1$. Moreover, we may assume that the boundary line $\ell$ of the half-disk $\calD_{v_1}(\xi)$ with origin $v_1$ intersects a level surface $f^{-1}(f(v_1)-\ve)$, for $\ve>0$ small, in two points on the same circle $A\subset f^{-1}(f(v_1)-\ve)$. After a perturbation of $\xi$ inducing a rotation of $\ell$ on $A$ and taking back $v_1$ to the initial position, we may arrange that the orientations of $\calD_{v_1}(\xi)$ and $\calD_{v_2}(\xi)$ are as desired.

Finally, we must check that any beaks in a graphic can be arranged to form pairs of beaks as in Figure~\ref{fig:graphic}. This follows from the Beak lemma of Cerf (\cite[Ch.~IV, \S{3}]{Ce}, see also \cite[Theorem~1.3]{La}). Hence all beaks can be eliminated and the result is as desired.
\end{proof}

\subsection{Amidakuji-like paths}\label{ss:def-al-path}

We fix an oriented fiberwise Morse function $f:M\to \R$ on $M$ and its gradient $\xi$ along the fibers that satisfies the parametrized Morse--Smale condition, i.e., the descending manifold loci and the ascending manifold loci are mutually transversal in $M$. Let $\wf:\wM\to \R$ denote the $\Z$-invariant lift of $f$ and let $\widetilde{\xi}$ denote the lift of $\xi$. We say that a piecewise smooth embedding $\sigma:[\mu,\nu]\to \wM$ is {\it descending} if $\widetilde{\kappa}(\sigma(\mu))\geq \widetilde{\kappa}(\sigma(\nu))$ and $\wf(\sigma(\mu))\geq \wf(\sigma(\nu))$. We say that $\sigma$ is {\it horizontal} if $\mathrm{Im}\,\sigma$ is included in a single fiber of $\widetilde{\kappa}$ and say that $\sigma$ is {\it vertical} if $\mathrm{Im}\,\sigma$ is included in a critical locus of $\wf$.

\begin{Def}\label{def:al-path}
Let $x,y$ be two points of $\widetilde{M}$ such that $\widetilde{\kappa}(x)\geq\widetilde{\kappa}(y)$. An {\it amidakuji-like path, or an AL-path, from $x$ to $y$} is a sequence $\gamma=(\sigma_1,\sigma_2,\ldots,\sigma_n)$, where
\begin{enumerate}
\item for each $i$, $\sigma_i$ is a descending embedding $[\mu_i,\nu_i]\to \widetilde{M}$ for some real numbers $\mu_i,\nu_i$ such that $\mu_i< \nu_i$, 
\item for each $i$, $\sigma_i$ is either horizontal or vertical with respect to $\wf$,
\item if $\sigma_i$ is horizontal, then $\sigma_i$ is a flow line of $\widetilde{\xi}$, possibly broken at critical loci,
\item $\sigma_1(\mu_1)=x$, $\sigma_n(\nu_n)=y$,
\item $\sigma_i(\nu_i)=\sigma_{i+1}(\mu_{i+1})$ for $1\leq i<n$,
\item if $\sigma_i$ is horizontal (resp. vertical) and if $i<n$, then $\sigma_{i+1}$ is vertical (resp. horizontal)\footnote{AL-path appears very similar to ``flow line with cascades'', defined before in \cite{Fr} in relation to Morse-Bott theory. We think that the purpose and the origin for the two notions are different (see also \S\ref{ss:motivation}). Yet it might be interesting to consider a unified generalization of both notions. 
}.
\end{enumerate}
We say that two AL-paths are {\it equivalent} if they differ only by reparametrizations on segments.
\end{Def}
See Figure~\ref{fig:ex_desc_route} for an example of an AL-path. We remark that we do not allow $\sigma_i$ to be a constant map. For an AL-path $\gamma=(\sigma_1,\sigma_2,\ldots,\sigma_n)$, we write
\[ \mathrm{Im}\,\gamma=\bigcup_{i=1}^n \mathrm{Im}\,\sigma_i. \]

\subsection{Main result}

Let $\acalM_2(\wf)$ be the set of all AL-paths in $\wM$. It will turn out that there is a natural structure of non-compact manifold with corners on $\acalM_2(\wf)$. The $\Z$-action $\gamma\mapsto t^n\cdot\gamma$ on a path induces a free $\Z$-action on $\acalM_2(\wf)$. Let $\acalM_2(\wf)_\Z$ be the quotient of $\acalM_2(\wf)$ by the $\Z$-action. For the fiberwise gradient $\xi$ of $f$, let $\hat{\xi}$ be the nonsingular vector field $\xi+\mathrm{grad}\,\kappa$ on $M$. Let $s_{\hat{\xi}}:M\to ST(M)$ be the normalization $-\hat{\xi}/\|\hat{\xi}\|$ of the section $-\hat{\xi}$. Now we state the main theorem of this paper, which gives an explicit equivariant propagator. 

\begin{Thm}[Theorem~\ref{thm:1}, Corollary~\ref{cor:1}]\label{thm:main}
Let $M$ be the mapping torus of an orientation preserving diffeomorphism $\varphi:\Sigma\to \Sigma$ of closed, connected, oriented surface $\Sigma$. 
\begin{enumerate}
\item There is a natural closure $\bacalM_2(\wf)_\Z$ of $\acalM_2(\wf)_\Z$ that has the structure of a countable union of smooth compact manifolds with corners whose codimension 0 strata are disjoint from each other.
\item Suppose that $\kappa$ induces an isomorphism $H_1(M)/\mathrm{Torsion}\cong H_1(S^1)$ (Lemma~\ref{lem:homology-mtorus}). Let $\bar{b}:\bacalM_2(\wf)_\Z\to \wM\times_\Z\wM$ be the evaluation map. Let $B\ell_{\bar{b}^{-1}(\widetilde{\Delta}_M)}(\bacalM_2(\wf)_\Z)$ denote the blow-up of $\bacalM_2(\wf)_\Z$ along $\bar{b}^{-1}(\widetilde{\Delta}_M)$. Then $\bar{b}$ induces a map $B\ell_{\bar{b}^{-1}(\widetilde{\Delta}_M)}(\bacalM_2(\wf)_\Z)\to \bConf_2(\wM)_\Z$ and it represents a 4-dimensional $\Q(t)$-chain $Q(\wf)$ in $\bConf_2(\wM)_\Z$ that satisfies the identity
\[ [\partial Q(\wf)]=[s_{\hat\xi}(M)]+\frac{t\zeta'_{\varphi}}{\zeta_{\varphi}}[ST(K)] \]
in $H_3(\partial\bConf_2(\wM)_\Z)\otimes_\Lambda\Q(t)$. Thus $Q(\wf)$ is an equivariant propagator. (A precise formula for $\partial Q(\wf)$ in the chain level is given in Theorem~\ref{thm:1}.)
\end{enumerate}
\end{Thm}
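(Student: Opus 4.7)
The plan is to realize $Q(\wf)$ as the image of the evaluation map $\bar b$, then to read $\partial Q(\wf)$ off the codimension-one faces of $\bacalM_2(\wf)_\Z$. For (1), I would stratify $\acalM_2(\wf)$ by the combinatorial type of an AL-path---the number of segments and the ordered list of critical loci and flow cells visited. Within each stratum, an AL-path is determined by its initial point, the durations of the vertical segments, and the length of the last horizontal segment, subject to the matching conditions $\sigma_i(\nu_i)=\sigma_{i+1}(\mu_{i+1})$. By the parametrized Morse--Smale genericity, these matching conditions cut out a transverse fibered product of descending and ascending manifold loci of $\widetilde{\xi}$, and a direct count gives the expected dimension $4$. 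The closure $\bacalM_2(\wf)_\Z$ is built by adjoining the usual degeneration faces (collapse of an interior segment, breaking of a horizontal flow line at a critical locus, crossing of a $1/1$-intersection value of $\widetilde{\kappa}$, and collision of the two endpoints), and corner structure follows by iteration. Countability of the strata comes from the combinatorial types being discrete data modulo the $\Z$-action, and disjointness of the codimension-$0$ strata is built into the stratification.

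For (2), I would first blow up $\bar b^{-1}(\widetilde{\Delta}_M)$ so that collapsing AL-paths acquire well-defined limits in $ST(M)\subset\partial\bConf_2(\wM)_\Z$. Since the $\Z$-action on $\acalM_2(\wf)$ is free, the pushforward chain naturally takes the shape of a formal Laurent series $\sum_k t^k[\bar b(\bar\sigma_k)]$; organizing AL-paths by their total $\widetilde{\kappa}$-drop and summing the resulting geometric series produces a rational function in $t$, yielding an honest $\Q(t)$-chain. The denominator $\zeta_\varphi(t)$ will enter naturally, since the generating series for closed AL-paths is determined by the finite-dimensional action of $\varphi_*$ on $H_*(\Sigma)$.

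The boundary identity then follows by a face-by-face analysis. Interior faces cancel in pairs: the breaking of a horizontal flow line at a critical locus cancels against its gluing by the Morse--Smale gluing lemma; the vanishing of a vertical segment cancels the two ways it could be reproduced from its two adjacent horizontal pieces; and the $1/1$-intersection walls cancel once one checks that the same configuration in $\bConf_2(\wM)_\Z$ is reached from either side with opposite orientation. Two families of terminal faces survive. The first is the collapse of the entire AL-path to a point on the diagonal: a short AL-path near a regular point approximates an integral curve of $\hat\xi=\xi+\mathrm{grad}\,\kappa$, so its limiting normalized direction is $-\hat\xi/\|\hat\xi\|$, realizing exactly the image $s_{\hat\xi}(M)$. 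The second is the stratum of closed AL-paths that wrap $k$ times around $S^1$; by Proposition~\ref{prop:gamma_zeta} these are enumerated with signs by $L(\varphi^k)$, and in $\bConf_2(\wM)_\Z$ such a closed path of winding $k$ lives in the $t^k$-sector and, after using the knot $K$ with $\langle[d\kappa],[K]\rangle=-1$ to represent the relevant first-homology class, becomes homologous to a fiber of $ST(K)$. Summing and invoking (\ref{eq:dlog_zeta}) gives $\sum_{k\geq 1}L(\varphi^k)t^k[ST(K)]=\frac{t\zeta'_\varphi}{\zeta_\varphi}[ST(K)]$, which combined with the diagonal contribution yields the claimed boundary. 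The principal obstacle will be the sign and orientation bookkeeping, particularly across $1/1$-intersection walls and in matching the Lefschetz signs in the closed AL-path count, together with a careful verification that no extra codimension-one faces arise beyond those covered by the above cancellations.
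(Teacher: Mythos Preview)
Your overall architecture matches the paper's: build $\bacalM_2(\wf)_\Z$ as a union of transversal fiber products, show interior codimension-one faces cancel, and identify the surviving boundary with a diagonal piece plus a sum over closed AL-paths governed by the zeta function. Two points, however, are wrong as stated and would break the argument if you tried to carry it through.

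\textbf{The diagonal face is not $s_{\hat\xi}(M)$ at the chain level.} A short AL-path near a regular point of $\xi$ is a rank-$0$ path, i.e.\ a \emph{horizontal} flow segment of $-\xi$; it does not approximate an integral curve of $\hat\xi=\xi+\mathrm{grad}\,\kappa$. Hence the limiting unit direction on $ST(M)$ is $-\xi/\|\xi\|$, not $-\hat\xi/\|\hat\xi\|$. This section does not extend across the critical loci, and one must analyze separately what the blow-up of $\bS_{1,0}$ contributes there. The paper does this (Lemma~\ref{lem:bQ_diag} and the paragraph defining $s_\xi^*(M)$): the actual chain-level boundary is $s_\xi^*(M)=\overline{s_\xi(M_0)}\cup\bigcup_\gamma E_\gamma^-$, where $E_\gamma^-$ is a lower-hemisphere disk bundle over each critical locus $\gamma$. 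Only after observing that $s_\xi^*(M)$ is homotopic to $s_{\hat\xi}(M)$ does one get the stated identity in $H_3$. Your heuristic from \S\ref{ss:motivation} is motivational, not literal.

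\textbf{The closed-path faces contribute $ST(\gamma^{\irr})$, not a fiber of $ST(K)$.} A closed AL-path $\gamma$ of period $k$ gives a $3$-dimensional face $t^k\,ST(\gamma^{\irr})$ (the sphere bundle over the irreducible AL-cycle), with sign $(-1)^{\mathrm{ind}\,\gamma}\ve(\gamma)$; see Lemma~\ref{lem:bQ_cycle}. A ``fiber of $ST(K)$'' is $2$-dimensional and cannot be what appears. Passing to homology uses $[ST(\gamma^{\irr})]=p(\gamma^{\irr})[ST(K)]$, and then Proposition~\ref{prop:gamma_zeta} gives $\sum_\gamma(-1)^{\mathrm{ind}\,\gamma}\ve(\gamma)\,p(\gamma^{\irr})\,t^{p(\gamma)}=t\zeta'_\varphi/\zeta_\varphi$; the identity $\sum_k L(\varphi^k)t^k=t\zeta'_\varphi/\zeta_\varphi$ is the same thing, but you must account for the multiplicity $p(\gamma^{\irr})$ when matching based AL-cycles to unbased ones. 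A smaller point: $1/1$-intersections are not codimension-one faces of $\bacalM_2(\wf)_\Z$ at all; AL-paths passing through them are generic interior points (cf.\ Figure~\ref{fig:ex_desc_route}), so there is no wall-crossing to cancel there.
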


The formula in (2) shows that the Lefschetz zeta function $\zeta_\varphi$ can be considered as the obstruction to extending $s_{\hat\xi}(M)$ to a relative $\Q(t)$-cycle in $\bConf_2(\wM)_\Z$. By \cite[Proposition~4.5]{Les2} and by the identity $\Delta(M)=t^{-g(\Sigma)}\det(1-t\varphi_{*1})$ for fibered 3-manifold, the formula in (2) recovers the formula for $\partial Q$ in Theorem~\ref{thm:les2} in $H_3(\partial\bConf_2(\wM)_\Z)\otimes_\Lambda\Q(t)$.

Note that it is not straightforward that the substitution of our chain $Q(\wf)$ into Lescop's formula for the equivariant triple intersection gives rise to a topological invariant of $M$ because in \cite{Les2}, the proof of invariance is essentially based on the assumption that the boundary of equivariant propagator concentrates on $s_\tau(M)$ and on $t^iST(K)$ for a given knot in $M$. Since we consider the union of critical loci of $\xi$ instead of a knot, the identity in Theorem~\ref{thm:main} holds only in the homology. So a modification of the proof is required to get an invariant from our chain $Q(\wf)$. See \cite{Wa2} for detail.

\subsection{Motivation for the definition of AL-path}\label{ss:motivation}

We give an informal illustration of how AL-paths arise naturally, with a simple example. Let $M=S^2\times S^1$ and consider $M$ as the mapping torus of a generic diffeomorphism $\varphi:S^2\to S^2$ isotopic to the identity. More precisely, suppose that $\varphi$ is the gradient flow $\Phi_{\mathrm{grad}\,h}^s$ for a Morse function $h:S^2\to \R$ and for a constant $s>0$. Let $M$ be the quotient space of $S^2\times [0,1]$ obtained by identifying $(x,0)$ with $(\varphi(x),1)$ for all $x\in S^2$. The upward unit vector field $\frac{\partial}{\partial t}$ on $S^2\times [0,1]$ induces an upward nonsingular vector field $\hat{\xi}$ on $M$. Then $\hat{\xi}$ is of the form $\xi_0+\alpha\,\mathrm{grad}\,h$ for a constant $\alpha>0$, where $\xi_0$ is the gradient for the projection $S^2\times S^1\to S^1$ with respect to the product metric. Since $h$ has finitely many critical points, $\hat{\xi}$ has finitely many embedded (i.e., without overlapping) closed orbits. 

If we consider analogous to \cite{Fu}, a candidate for the propagator would be the moduli space
\[ \calM_2(\hat{\xi})=\{(x,y)\in M\times M;\exists\, T>0,y=\Phi_{-\hat{\xi}}^T(x)\}. \]
This is a non-compact 4-dimensional manifold immersed in $M\times M$ in a very complicated way. It is hard to deal with $\calM_2(\hat{\xi})$ in the following sense. To define $\Z$-equivariant version of Chern--Simons perturbation theory, we would like to consider, for example, the triple intersection of the moduli spaces $\calM_2(\hat{\xi}_1), \calM_2(\hat{\xi}_2), \calM_2(\hat{\xi}_3)$ in $M\times M$ for a generic triple $(\xi_1,\xi_2,\xi_3)$ of upward vector fields, which corresponds to the $\Theta$-graph. However, since $\calM_2(\hat{\xi})$ is non-compact, it is unclear that the triple intersection number is well-defined, even if counted for a fixed homotopy class of $\Theta$-graphs in $M$. For example, nullhomotopic $\Theta$-graph may wind around arbitrarily many times in $M$.

\begin{figure}
\fig{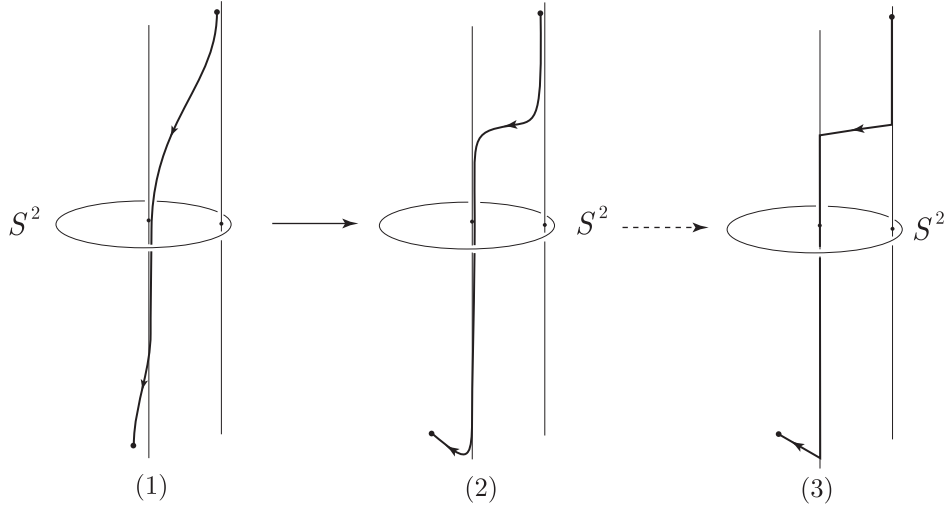}
\caption{An integral curve is approximated by an AL-path. The vertical straight lines are the lifts of closed orbits of $-\hat{\xi}$ of index 1 and 2.}\label{fig:converge_curve}
\end{figure}

Here, Lescop's framework developed in \cite{Les2} is crucial. She considered the equivariant triple intersection of three $\Q(t)$-chains as in Theorem~\ref{thm:les2} and observed that it works nicely for the purpose of constructing an invariant. That the triple intersection is well-defined is then obvious from definition. To utilize Lescop's framework, we considered deforming $\calM_2(\hat{\xi})$ into a $\Q(t)$-chain. 

Now let us return to the example given above. The lift $\hat{\gamma}$ of an integral curve $\gamma$ of $-\hat{\xi}$ in $\wM$ is as in Figure~\ref{fig:converge_curve} (1). Now deform $\hat{\xi}=\xi_0+\alpha\,\mathrm{grad}\,h$ to $\hat{\xi}_\rho=\rho\,\xi_0 + \alpha\,\mathrm{grad}\,h$, where $\rho:M\to \R$ is a smooth function such that $\rho=1$ on each closed orbit of $\hat{\xi}$ and supported on a small neighborhood $N$ of the union of all closed orbits of $-\hat{\xi}$. After the deformation, $\hat{\xi}_\rho$ is horizontal outside $N$ and an integral curve of $-\hat{\xi}_\rho$ will become as in Figure~\ref{fig:converge_curve} (2). If the support of $\rho$ gets very small, an integral curve of $-\hat{\xi}_\rho$ can be approximated by an AL-path, as in Figure~\ref{fig:converge_curve} (3). 

\subsection{Organization}
The rest of this paper is organized as follows. In \S\ref{s:mod_AL}, we define the moduli space $\bacalM_2(\wf)_\Z$ of AL-paths in $\wM$ and study its piecewise smooth structure. In \S\ref{s:coori}, we fix (co)orientation of $\bacalM_2(\wf)_\Z$. In \S\ref{s:chain_mod}, we make the moduli space $\bacalM_2(\wf)_\Z$ into a $\Q(t)$-chain $Q(\wf)$ in the equivariant configuration space. The explicit formula for the boundary of $Q(\wf)$ is given in Theorem~\ref{thm:1}. Then the main theorem follows as a corollary of Theorem~\ref{thm:1}. In Appendix~\ref{s:appendix_1}, a necessary and sufficient condition for a 3-manifold fibered over $S^1$ to have the first Betti number 1, is recalled. In Appendix~\ref{s:mfd_corners}, some definitions on smooth manifolds with corners are recalled. In Appendix~\ref{s:blow-up}, the definition of blow-up is recalled.

\mysection{Moduli space of AL-paths}{s:mod_AL}

In this section, we define the moduli space $\bacalM_2(\wf)_\Z$ of AL-paths in $\wM$ (Definitions~\ref{def:M2(f,rho)}, \ref{def:M2(f)}) and study the piecewise smooth structure of $\bacalM_2(\wf)_\Z$ (Proposition~\ref{prop:M2(f)}).

\subsection{Moduli space $\bacalM_2(\wf)$ of AL-paths in $\widetilde{M}$}\label{ss:M2(f)}

Let $M$, $f:M\to \R$, $\wf:\wM\to \R$ and $\widetilde{\xi}$ be as in \S\ref{ss:def-al-path}. Since the positions of the endpoints of an AL-path are not sufficient to recover an AL-path, even up to equivalence, we consider some sequence of points on an AL-path, which represents internal structure of an AL-path, as in \cite{BH}.

In a graphic of a fiberwise Morse function on $M$, there are intersection points between curves of critical values. We may assume that the intersections of the curves are all in general positions. We call an intersection of two curves in a graphic a {\it level exchange bifurcation}. We assume that $f$ and $\xi$ is generic so that level exchange bifurcations and $1/1$-intersections occur at different parameters in $S^1$. 

Let $u_1,u_2,\ldots,u_r\in S^1={[0,1]}/{\sim}$ be the parameters at which the level exchange bifurcations occur. Choose a small number $\ve>0$ and let $I_{2j-1}=[u_j-\ve,u_j+\ve]$ and $I_{2j}=[u_j+\ve,u_{j+1}-\ve]$, putting $u_{r+1}=u_1+1$. We assume without loss of generality that $u_1-\ve=0$ and that $\ve$ is small so that there are no $1/1$-intersections in $I_{2j-1}$ for all $j$. Then $S^1$ is split into finitely many closed intervals:
\[ S^1={\bigcup_{j=1}^{2r}I_j}\Big/{\sim}. \]
If there are no level exchange bifurcations of $\wf$, we consider $S^1$ as $I_1/{\sim}$, where $I_1=[0,1]$. Moreover, we consider the lifts
\[ I_{j+2rk}=I_j+k\quad (1\leq j\leq 2r,\ k\in\Z)\]
in $\R$. Then $\{I_j;j\in \Z\}$ covers $\R$. By considering the lifts of $M_{I_j}=\kappa^{-1}(I_j)$, the $\Z$-cover $\widetilde{M}$ of $M$ can be sliced into pieces. We shall construct the moduli space $\bacalM_2(\widetilde{f})$ for the $\Z$-invariant lift $\widetilde{f}:\widetilde{M}\to \R$ of $f$ by using the slices of $\widetilde{M}$. 
 
\subsubsection{Cutting $M$ into pieces}

For the covering $\{I_j;j\in\Z\}$ of $\R$ given above, we write $I_j=[a_j,b_j]$ and let $\{p_1,p_2,\ldots,p_N\}$, $p_i=\{p_i(s)\}_{s\in I_j}$, be the set of all critical loci of $\wf|_{M_{I_j}}$ numbered so that $\wf(p_1(a_j))<\wf(p_2(a_j))<\cdots<\wf(p_N(a_j))$. We fix a sufficiently small number $\eta>0$ (see below for how small $\eta$ has to be). We define the submanifolds $\wGamma_i^{(j)}$, $\wW_i^{(j)}$ and $\wL_i^{(j)}$ of $M_{I_j}$ as follows. 

If $j$ is even, then we define smooth functions $\gamma_i:I_j\to \R$ by $\gamma_i(s)=f_s(p_i(s))-\eta$ for $1\leq i\leq N$ and by $\gamma_{N+1}(s)=f_s(p_N(s))+\eta$ for $i=N+1$. We define $\wGamma_i^{(j)}$, $\wW_i^{(j)}$ and $\wL_i^{(j)}$ 
\[ \wGamma_i^{(j)}=\bigcup_{s\in I_j}f_s^{-1}(\gamma_{i+1}(s)),\quad
\wW_i^{(j)}=\bigcup_{s\in I_j}f_s^{-1}[\gamma_i(s),\gamma_{i+1}(s)],\quad
\wL_i^{(j)}=\bigcup_{s\in I_j}f_s^{-1}(\gamma_i(s)). \]

If $j$ is odd, then suppose that $p_{k}$ and $p_{k+1}$ are the critical loci such that $f_s(p_k(s))=f_s(p_{k+1}(s))$ for some $s\in I_j$ and such that $f_{a_j}(p_k(a_j))<f_{a_j}(p_{k+1}(a_j))$ and $f_{b_j}(p_k(b_j))>f_{b_j}(p_{k+1}(b_j))$. We refer to this $k$ as $k_j$. For $i\neq k_j,k_j+1$, $1\leq i\leq N$, we define $\gamma_i(s)=f_s(p_i(s))-\eta$. For $i=k_j$, let $\gamma_{k_j}:I_j\to \R$ be a smooth function such that 
\begin{enumerate}
\item $f_s(p_{k_j}(s))>\gamma_{k_j}(s)$ and $f_s(p_{k_j+1}(s))>\gamma_{k_j}(s)$ for all $s\in I_j$,
\item $\gamma_{k_j}(s)>f_s(p_{k_j-1}(s))$ for all $s\in I_j$.
\item $\gamma_{k_j}(a_j)=f_{a_j}(p_{k_j}(a_j))-\eta$, $\gamma_{k_j}(b_j)=f_{b_j}(p_{k_j+1}(b_j))-\eta$.
\end{enumerate}
For $i=N+1$, we define $\gamma_{N+1}(s)=f_s(p_N(s))+\eta$. For $i\neq k_j+1$, we define $\wL_i^{(j)}=\bigcup_{s\in I_j}f_s^{-1}(\gamma_i(s))$ and
\[ \begin{split}
\wGamma_i^{(j)}&=\left\{\begin{array}{ll}
\bigcup_{s\in I_j}f_s^{-1}(\gamma_{i+1}(s)) & i\neq k_j,k_j+1\\
\bigcup_{s\in I_j}f_s^{-1}(\gamma_{k_j+2}(s)) & i=k_j
\end{array}\right.\\
\wW_i^{(j)}&=\left\{\begin{array}{ll}
\bigcup_{s\in I_j}f_s^{-1}[\gamma_i(s),\gamma_{i+1}(s)] & i\neq k_j,k_j+1\\
\bigcup_{s\in I_j}f_s^{-1}[\gamma_{k_j}(s),\gamma_{k_j+2}(s)] & i=k_j
\end{array}\right.
\end{split} \]

Here, we choose $\eta$ so that the following conditions are satisfied.
\begin{enumerate}
\item $\eta$ is less than $\frac{f_s(p_{k+1}(s))-f_s(p_k(s))}{2}$
 for all $k$ in $1\leq k\leq N-1$ and for all $s\in I_{2j}$.
\item $\eta$ is less than $\frac{f_s(p_{k+1}(s))-f_s(p_k(s))}{2}$
 for all $k$ in $1\leq k\leq N-1$ except $k_{2j-1}$ and for all $s\in I_{2j-1}$
\item $\eta$ is less than $\frac{f_s(p_{k_{2j-1}+2}(s))-f_s(p_{k_{2j-1}}(s))}{2}$ for all $s\in I_{2j-1}$.
\end{enumerate}

\subsubsection{Descending routes in a cellular diagram}

The decomposition of $\wM$ into the pieces $\wW_i^{(j)}$ gives a planar diagram as follows. Let 
\[\begin{split}
C^{(j)}&=\{(x,a_j)\in\R^2;x\in\R\}, j\in \Z,\\
D_i^{(2j)}&=\{(p_i(s)-\eta,s)\in\R^2;s\in I_{2j}\}\mbox{ (for $1\leq i\leq N$)},\\
D_i^{(2j-1)}&=\{(\gamma_i(s),s)\in\R^2;s\in I_{2j-1}\}\mbox{ (for $i\neq k_{2j-1}+1$, $1\leq i\leq N$)},\\
D_{N+1}^{(j)}&=\{(p_N(s)+\eta,s)\in\R^2;s\in I_j\}.
\end{split}\]
The lines $C^{(j)}$ and arcs $D_i^{(j)}$ splits the plane into rectangular regions. We refer to such rectangles as {\it cells}. Let $B_i^{(2j)}$ be the cell surrounded by $C^{(2j)}$, $C^{(2j+1)}$, $D_i^{(2j)}$ and $D_{i+1}^{(2j)}$. Let $B_i^{(2j-1)}$ be the cell surrounded by $C^{(2j-1)}$, $C^{(2j)}$, $D_i^{(2j-1)}$ and $D_{i+1}^{(2j-1)}$ for $i\neq k_{2j-1}, k_{2j-1}+1$. Let $B_{k_{2j-1}}^{(2j-1)}$ be the cell surrounded by $C^{(2j-1)}$, $C^{(2j)}$, $D_{k_{2j-1}}^{(2j-1)}$ and $D_{k_{2j-1}+2}^{(2j-1)}$. We call this kind of cell a {\it double cell}. The set of cells $B_i^{(j)}$ forms a planar diagram. We refer to the set $\{B_i^{(j)}\}$ as the {\it cellular diagram} for $\widetilde{f}$ and denote it by $\mathrm{cd}(\wf)$. For each cell $B_i^{(j)}$ in a cellular diagram, we write
\[ \wGamma(B_i^{(j)})=\wGamma_i^{(j)},\qquad \wW(B_i^{(j)})=\wW_i^{(j)},\qquad \wL(B_i^{(j)})=\wL_i^{(j)}. \] 

\begin{figure}
\fig{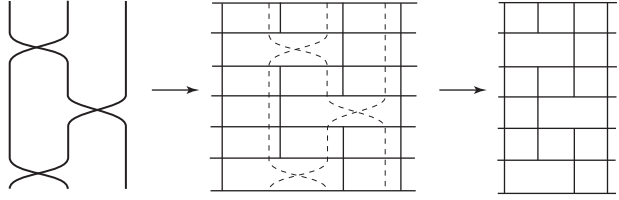}
\caption{From a graphic to a cellular diagram.}\label{fig:cellular_diag.eps}
\end{figure}

We say that a pair $(B,B')$ of cells in the cellular diagram is {\it descending} if it is one of the forms listed in Figure~\ref{fig:BBd}. For $r\geq 1$, we refer to a sequence $(B(1),B(2),\ldots,B(r))$ of cells in the cellular diagram as a {\it descending route} if the following conditions are satisfied.
\begin{enumerate}
\item For each $i$ such that $1\leq i\leq r-1$, the pair $(B(i),B(i+1))$ is descending.
\item For each $i$ such that $1\leq i\leq r-2$, the triple $(B(i),B(i+1),B(i+2))$ is not of the forms shown in Figure~\ref{fig:BBB}.
\end{enumerate}
A descending route serves as a neighborhood of an AL-path.

\begin{figure}
\fig{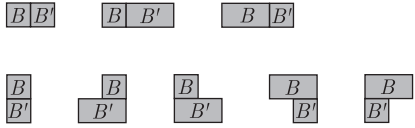}
\caption{Descending pairs $(B,B')$. The longer cells are double cells.}\label{fig:BBd}
\end{figure}

\begin{figure}
\fig{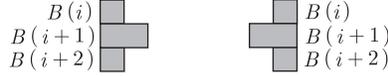}
\caption{Triples $(B(i),B(i+1),B(i+2))$ that are not allowed.}\label{fig:BBB}
\end{figure}

\begin{figure}
\fig{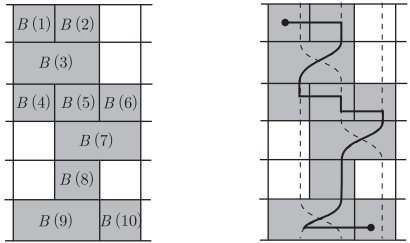}
\caption{An example of a descending route and an AL-path going along it, of rank 3 with 0 breaks. The two horizontal segments in the middle go through $1/1$-intersections.}\label{fig:ex_desc_route}
\end{figure}

\subsubsection{Moduli space of AL-paths going along a descending route}

Let $\rho=(B(1),B(2),\ldots,B(r))$ be a descending route in $\mathrm{cd}(\wf)$. We define the moduli space $\bacalM_2(\widetilde{f};\rho)$ of AL-paths that goes along $\rho$ as follows. Let $X(0)=\wW(B(1))$ and define the space $X(i)$ inductively as
\[ X(i+1)=\left\{\begin{array}{ll}
X(i)\times \wL(B(i)) & \mbox{if $i+1<r$ and if $B(i+1)$ is located }\\
& \mbox{on the right side of $B(i)$}\\
X(i)\times\wL(B(i))\times \wW(B(i+1)) & \mbox{if $i+1=r$ and if $B(r)$ is located}\\
& \mbox{on the right side of $B(i)$}\\
X(i)\times\wW(B(i+1)) & \mbox{if $i+1=r$ and if $B(r)$ is located}\\
& \mbox{on the bottom side of $B(i)$}\\
X(i) & \mbox{otherwise}
\end{array}\right.\]
Then we define $X_\rho=X(r)$ and
\[ Y_\rho=\wW(B(1))\cup\wW(B(2))\cup\cdots\cup\wW(B(r))\subset \widetilde{M}. \]
For example, $X_\rho$ for the descending route in Figure~\ref{fig:ex_desc_route} is
\[ X_\rho=\wW(B(1))\times \wL(B(1))\times\wL(B(4))\times
\wL(B(5))\times\wL(B(9))\times \wW(B(10)). \]
If the image of an AL-path $\gamma=(\sigma_1,\sigma_2,\ldots,\sigma_n)$ is included in $Y_\rho$ and intersects all $\wW(B(i))$ in $Y_\rho$, then we say that $\gamma$ {\it goes along} $\rho$. An example of the plane projection of an AL-path going along a descending route is shown in Figure~\ref{fig:ex_desc_route}. If an AL-path $\gamma$ goes along $\rho$, then the intersection points of $\gamma$ with level surfaces $\wL(B(i))$ together with the endpoints defines a sequence in $X_\rho$. We call such a sequence in $X_\rho$ an {\it AL-sequence going along $\rho$}. An AL-sequence going along $\rho$ recovers an AL-path going along $\rho$ uniquely up to equivalence. We will often represent the equivalence class of an AL-path by an AL-sequence.

\begin{Def}\label{def:M2(f,rho)}
For a descending route $\rho=(B(1),B(2),\ldots,B(r))$, we define the moduli space $\acalM_2(\widetilde{f};\rho)$ as the subspace of $X_\rho$ consisting of AL-sequences going along $\rho$. Let $\bacalM_2(\wf;\rho)$ be the closure of $\acalM_2(\wf;\rho)$ in $X_\rho$. 
\end{Def} 

We remark that for an AL-path $\gamma$, there may be several possibilities for descending routes along which $\gamma$ goes. So we must identify corresponding points in moduli spaces $\bacalM_2(\widetilde{f};\rho)$ for different $\rho$. Two descending routes $\rho$ and $\rho'$ such that there is an AL-path $\gamma$ that goes along both $\rho$ and $\rho'$ are related to each other by a sequence of the following relations between descending routes including $\gamma$.

\begin{enumerate}
\item Vertex relation: For each vertex $v$ of the cellular diagram that is not on the boundary of $\bigcup_{B\in \mathrm{cd}(\wf)}B$, there are two types of descending routes that intersect $v$: one goes through the upper-right cell with respect to $v$ and other goes through the lower-left cell with respect to $v$. There are seven possibilities for such pairs, shown in Figure~\ref{fig:desc_r_vertex}, depending on the types of the three or four cells surrounding a vertex $v$. Vertex relation is a relation between two descending routes that differ only by one of the pairs shown in Figure~\ref{fig:desc_r_vertex}.
\item Edge relation: Edge relation is a relation between two descending routes $\rho$ and $\rho'$ that are one of the following forms:
\[\left\{\begin{array}{l}
\rho=(B_1,B_2,\ldots,B_r), \rho'=(B_1,B_2,\ldots,B_r,B_{r+1}) \mbox{ or vice versa.}\\
\rho=(B_1,B_2,\ldots,B_r), \rho'=(B_2,\ldots,B_r) \mbox{ or vice versa.}
\end{array}\right.\]
\end{enumerate}
We will say that two descending routes are {\it adjacent} if they are related by a vertex relation or an edge relation. We shall define the gluing operation $\cup_{\rho\cap \rho'}$ between the moduli spaces for $\rho$ and $\rho'$ that are adjacent.

\subsubsection{Gluing for a vertex relation}

For a pair $(\rho,\rho')$ of descending routes in the cellular diagram that differ only by one of the pairs in Figure~\ref{fig:desc_r_vertex}, we define the space $X_{\rho\cap\rho'}$ by
\[ X_{\rho\cap\rho'}=\left\{\begin{array}{ll}
X_\rho\cap X_{\rho'} & \mbox{$(\rho,\rho')$ is one of (1)$\sim$(5) in Figure~\ref{fig:desc_r_vertex}}\\
X_\rho\times(\wL(B_c)\cap \wW(B_a)) & \mbox{$(\rho,\rho')$ is (6) in Figure~\ref{fig:desc_r_vertex}}\\
X_{\rho'}\times(\wL(B_c)\cap \wW(B_a)) & \mbox{$(\rho,\rho')$ is (7) in Figure~\ref{fig:desc_r_vertex}}
\end{array}\right. \]
where for (6) $B_a$ is the upper cell and $B_c$ is the lower left cell, for (7) $B_a$ is the lower cell and $B_c$ is the upper left cell. Then there are natural maps
\[ X_{\rho}\stackrel{\iota}{\leftarrow}X_{\rho\cap\rho'}\stackrel{\iota'}{\to}X_{\rho'}. \]
We define $X_{\rho\cup \rho'}=X_\rho\cup_{X_{\rho\cap\rho'}}X_{\rho'}$ as the push-out of this diagram. Let $\eta:X_\rho\to X_{\rho\cup\rho'}$ and $\eta':X_{\rho'}\to X_{\rho\cup \rho'}$ be the natural maps. We define $\cup_{\rho\cap\rho'}$ as the amalgamation
\[ \bacalM_2(\widetilde{f};\rho)\cup_{\rho\cap\rho'}\bacalM_2(\widetilde{f};\rho')
=\eta\bacalM_2(\widetilde{f};\rho)\cup_{\eta\iota\bacalM_2(\widetilde{f};\rho\cap\rho')}\eta'\bacalM_2(\widetilde{f};\rho'),\]
which is a subspace of $X_{\rho\cup\rho'}$.

\begin{figure}
\fig{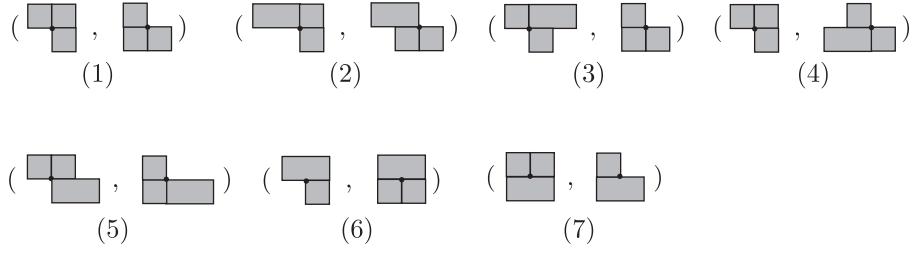}
\caption{Vertex relation}\label{fig:desc_r_vertex}
\end{figure}

\subsubsection{Gluing for an edge relation}

Let $(\rho,\rho')$ be a pair of descending routes in the cellular diagram such that $\rho'$ is obtained from $\rho$ by adding a cell. Suppose that the extra cell is added at the tail of $\rho$. The case that a cell is added at the head of $\rho$ is symmetric to this case. So we put
\[ \rho=(B(1),B(2),\ldots,B(r)),\quad 
\rho'=(B(1),B(2),\ldots,B(r+1)). \]
\begin{enumerate}
\item If $B(r+1)$ is located on the right side of $B(r)$, then $X_\rho=X(r-1)\times \wW(B(r))$ and $X_{\rho'}=X(r-1)\times \wL(B(r))\times \wW(B(r+1))$. Set 
\[ \begin{split}
  \bacalM_2(\widetilde{f};\rho)_{[\rho']}&=\bacalM_2(\widetilde{f};\rho)\cap (X(r-1)\times \wL(B(r))),\\
  \bacalM_2(\widetilde{f};\rho')_{[\rho]}&=\bacalM_2(\widetilde{f};\rho')\cap (X(r-1)\times \wL(B(r))\times \wL(B(r))).
\end{split}\]
The projection $\mathrm{pr}_2:\wL(B(r))\times \wL(B(r))\to \wL(B(r))$ on the second factor induces a homeomorphism
\[ \eta_{\rho,\rho'}:\bacalM_2(\widetilde{f};\rho')_{[\rho]}
\stackrel{\approx}{\to} \bacalM_2(\widetilde{f};\rho)_{[\rho']}. \]

\item If $B(r+1)$ is located on the bottom of $B(r)$, then $X_\rho=X(r-1)\times\wW(B(r))$ and $X_{\rho'}=X(r-1)\times \wW(B(r+1))$. Set
\[ \begin{split}
  \bacalM_2(\widetilde{f};\rho)_{[\rho']}&=\bacalM_2(\widetilde{f};\rho)\cap(X(r-1)\times (\wW(B(r))\cap\wW(B(r+1))),\\
  \bacalM_2(\widetilde{f};\rho')_{[\rho]}&=\bacalM_2(\widetilde{f};\rho')\cap(X(r-1)\times (\wW(B(r))\cap\wW(B(r+1))).
\end{split}\]
The identity map induces a homeomorphism
\[ \eta_{\rho,\rho'}:\bacalM_2(\widetilde{f};\rho')_{[\rho]}
\stackrel{\approx}{\to} \bacalM_2(\widetilde{f};\rho)_{[\rho']}. \]
\end{enumerate}
In both cases, we define $\cup_{\rho\cap\rho'}$ as the amalgamation
\[ \bacalM_2(\widetilde{f};\rho)\cup_{\rho\cap\rho'}\bacalM_2(\widetilde{f};\rho')
=\bacalM_2(\widetilde{f};\rho)\cup_{\eta_{\rho,\rho'}}\bacalM_2(\widetilde{f};\rho'), \]
where the gluing is done between the subspaces $\bacalM_2(\widetilde{f};\rho')_{[\rho]}$ and $\bacalM_2(\widetilde{f};\rho)_{[\rho']}$.

\subsubsection{The definition of $\bacalM_2(\widetilde{f})$, $\bacalM_2(\widetilde{f})_\Z$ and its stratification}\label{ss:def_M2}

\begin{Def}\label{def:M2(f)}
We define
\[ \bacalM_2(\widetilde{f})=\bigcup_{\rho}\bacalM_2(\widetilde{f};\rho), \]
which is the amalgamation generated by $\cup_{\rho\cap \rho'}$ defined above. The group $\Z$ acts on $\bacalM_2(\wf)$ by the diagonal action $(x_1,x_2,\ldots,x_n)\mapsto (tx_1,tx_2,\ldots,tx_n)$ and we define
\[ \bacalM_2(\wf)_\Z=\bacalM_2(\wf)/\Z. \]
\end{Def}

Now we shall describe the natural stratification of $\bacalM_2(\wf)$. We say that an AL-path $\gamma=(\sigma_1,\sigma_2,\ldots,\sigma_n)$ is of {\it rank $r$} if it has $r$ vertical segments and say that it has {\it $m$ breaks} if the sum of times that the horizontal segments are broken is $m$. For a descending route $\rho$, let
\[   S_{r,m}(\widetilde{f};\rho)=\{\mbox{AL-sequences going along $\rho$, rank $r$, $m$ breaks}\}\]
and let $\bS_{r,m}(\widetilde{f};\rho)$ be the closure of $S_{r,m}(\widetilde{f};\rho)$ in $X_\rho$. Let
\[  \bS_{r,m}(\widetilde{f})=\bigcup_{\rho}\bS_{r,m}(\widetilde{f};\rho), \]
where the pieces are glued together by the restrictions of the amalgamations $\cup_{\rho\cap\rho'}$.
Let 
\[ \bT_{r,m}(\wf;\rho)=\bS_{r,m}(\wf;\rho)\cap \partial X_\rho.\]

We define a {\it degeneracy} of an AL-path $\gamma=(\sigma_1,\sigma_2,\ldots,\sigma_n)$ in $\bacalM_2(\wf;\rho)$ as one of the following conditions for $\gamma$.
\begin{enumerate}
\item A horizontal segment in $\gamma$ is broken at a critical locus.
\item $\mathrm{Im}\,\gamma$ intersects a horizontal face in $\partial Y_\rho$, i.e., some segment $\sigma_i$ is included in $\widetilde{\kappa}^{-1}(a_j)$ for some $j$.
\item The condition that $\sigma_1(\mu_1)\in\wGamma(B(1))\cup\wL(B(1))$. 
\item The condition that $\sigma_n(\nu_n)\in\wGamma(B(r))\cup\wL(B(r))$.
\end{enumerate}
We define the number of degeneracy of an AL-path $\gamma\in\bacalM_2(\wf;\rho)$ as the number of counts of the items in the above list that $\gamma$ satisfies. For example, if the first segment $\sigma_1$ in $\gamma$ is horizontal, intersects a horizontal face in $\partial Y_\rho$, has 1 break and if $\gamma$ has no other degeneracy in the list, then the degeneracy of $\gamma$ is 2. We denote by $\deg\,{\gamma}$ the number of degeneracy of $\gamma$. 

Let $b:\bacalM_2(\wf)\to \wM\times\wM$ be the map that assigns to a sequence $(x,\ldots,y)$ the pair of endpoints $(x,y)$. Let $|\rho|$ denote the number of cells in $\rho$. The main proposition of this section is the following.

\begin{Prop}\label{prop:M2(f)}
For a descending route $\rho$ in $\mathrm{cd}(\wf)$, we have
\[ \bacalM_2(\widetilde{f};\rho)=\bigcup_{r=0}^{|\rho|}\bS_{r,0}(\widetilde{f};\rho),\]
where $\bS_{r,0}(\wf;\rho)$ is compact and satisfies the following conditions.
\begin{enumerate}
\item If $|\rho|=1$ or $2$, then $\bS_{r,0}(\wf;\rho)-b^{-1}(\Delta_{\wM})$ is a smooth manifold with corners, whose codimension $q$ stratum consists of AL-sequences $\gamma$ with $\deg\,\gamma=q$. In particular,
\[ \begin{split}
  \partial(\bS_{0,0}(\wf;\rho)-b^{-1}(\Delta_{\wM}))&=\left\{\begin{array}{ll}
  \bS_{0,1}(\wf;\rho)\cup \bT_{0,0}(\wf;\rho)-b^{-1}(\Delta_{\wM}) & \mbox{if $|\rho|=1$}\\
  \bT_{0,0}(\wf;\rho)-b^{-1}(\Delta_{\wM}) & \mbox{if $|\rho|=2$}
  \end{array}\right.\\
  \partial(\bS_{1,0}(\wf;\rho)-b^{-1}(\Delta_{\wM}))&=\left\{\begin{array}{ll}
  \bS_{0,1}(\wf;\rho)\cup \bT_{1,0}(\wf;\rho)-b^{-1}(\Delta_{\wM}) & \mbox{if $|\rho|=1$}\\
  \bT_{1,0}(\wf;\rho)-b^{-1}(\Delta_{\wM}) & \mbox{if $|\rho|=2$}
  \end{array}\right.
\end{split}\]
\item If $|\rho|\geq 3$, then $\bS_{r,0}(\widetilde{f};\rho)$ is a smooth compact manifold with corners, whose codimension $q$ stratum consists of AL-paths $\gamma$ with $\deg\,{\gamma}=q$. In particular,
\[ \partial\bS_{r,0}(\wf;\rho)=\bS_{r,1}(\wf;\rho)\cup\bS_{r-1,1}(\wf;\rho)\cup\bT_{r,0}(\wf;\rho). \]
\end{enumerate}
The codimension 0 strata of $\bS_{r,0}(\wf;\rho)$ for $r=0,1,\ldots,|\rho|$ are disjoint from each other in $\bacalM_2(\wf;\rho)$.
\end{Prop}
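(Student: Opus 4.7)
My plan is to describe $\bS_{r,0}(\wf;\rho)$ as a fiber product of elementary pieces indexed by the cells of $\rho$, use the parametrized Morse--Smale hypothesis to obtain smoothness via transversality, and then enumerate the possible degenerations of an AL-path to read off the boundary.

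First, I would stratify $\bS_{r,0}(\wf;\rho)$ by combinatorial type $\tau$, where $\tau$ assigns ``horizontal'' or ``vertical'' to each of the $|\rho|$ cells with exactly $r$ cells declared vertical. Over $\tau$, the moduli space is locally cut out by a system of transverse matching conditions: inside a horizontal cell $B(i)$ the path is a flow line of $-\widetilde{\xi}$ inside a single fiber of $\widetilde{\kappa}$, parametrized by its initial point and a flow time; inside a vertical cell it is a sub-arc of a critical locus, parametrized by its $\widetilde{\kappa}$-range; and consecutive segments must meet on the interior faces $\wL(B(i))$ or on the critical loci. The parametrized Morse--Smale hypothesis, together with the genericity that arranges level-exchange bifurcations and $1/1$-intersections to occur at distinct values of $\kappa$, ensures that the matching conditions are transverse, so each stratum of type $\tau$ is smooth. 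A degrees-of-freedom count then gives dimension $4$, and compactness of $\bS_{r,0}(\wf;\rho)$ inside the compact $X_\rho$ is automatic.

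Next, I would identify $\partial\bS_{r,0}(\wf;\rho)$ by enumerating degenerations. The first type is a horizontal flow segment breaking at a critical point, producing a point of $\bS_{r,1}(\wf;\rho)$. The second is the shrinking of a vertical segment to a point, so that the two flanking horizontal segments merge into a single broken trajectory through the critical point, producing a point of $\bS_{r-1,1}(\wf;\rho)$; this is a genuine boundary face only when $|\rho|\geq 3$, since for smaller $|\rho|$ the shrinkage either lands in $\partial X_\rho$ or recreates a degeneracy already counted (for $|\rho|=1$, the rank-$1$ stratum has a single vertical segment whose shrinkage gives the $\bS_{0,1}$ term, while the rank-$0$ stratum has no vertical segment to shrink). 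The third type is that some point of the AL-sequence hits a face of $\partial X_\rho$ (endpoint reaching $\wGamma$ or $\wL$, or intersection with a top/bottom $\widetilde{\kappa}$-face), producing a point of $\bT_{r,0}(\wf;\rho)$. These degenerations are mutually independent and each is codimension one, so $q$ simultaneous degenerations cut out the standard corner model $\R^{4-q}\times[0,\infty)^q$, which is the manifold-with-corners structure matching the degeneracy count in the statement. The rank is locally constant on each interior stratum, so the codimension-$0$ strata of $\bS_{r,0}(\wf;\rho)$ for distinct values of $r$ are disjoint. When $|\rho|=1,2$ one must excise $b^{-1}(\Delta_{\wM})$ because the two endpoints may collide in a single or adjacent cell.

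The hardest step, I anticipate, is verifying that the vertical-shrinking limit yields a smooth boundary face $\bS_{r-1,1}(\wf;\rho)$. Near such a limit the two flanking horizontal segments lie in the stable and unstable leaves of the vanishing critical point, and one must show that the $\widetilde{\kappa}$-length of the collapsing vertical segment extends to a smooth transverse coordinate on $\bS_{r-1,1}(\wf;\rho)$. Standard stable/unstable-manifold techniques applied to the smoothly varying local invariant manifolds along the critical locus provide such a coordinate, but the compatibility of this parametrization with the gluings $\cup_{\rho\cap\rho'}$ across all adjacent descending routes meeting at a common break (so that the manifold-with-corners structure is well-defined after amalgamation, not merely on each piece $\bS_{r,0}(\wf;\rho)$), together with genuine transversality of the simultaneous occurrence of several degeneracies at deeper corners, are the technical points that require the most care.
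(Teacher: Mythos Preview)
Your overall strategy---build $\bS_{r,0}(\wf;\rho)$ as a fiber product of elementary pieces, invoke the parametrized Morse--Smale condition for transversality, and then enumerate the three types of codimension-one degeneration---is exactly the paper's approach. The main difference is organizational: the paper indexes the fiber product by \emph{blocks} (maximal runs of cells in $\rho$ related by ``$B(i+1)$ is on the bottom of $B(i)$'') rather than by individual cells. This matters because a single vertical segment of an AL-path can span an entire block, so the rank $r$ is bounded by $\mathrm{blo}(\rho)$, not by $|\rho|$; your per-cell assignment of ``horizontal'' or ``vertical'' with exactly $r$ cells vertical is not the right combinatorial type. The paper instead writes $\bacalM_2(\wf;\rho)$ as a union over $(i_1,\ldots,i_m)\in\{0,1\}^m$, $m=\mathrm{blo}(\rho)$, of projections of iterated fiber products $\bS_{i_1,0}(\wf;\rho_1,\wL_1)\times_{\wL_1}\cdots\times_{\wL_{m-1}}\bS_{i_m,0}(\wf;\wL_{m-1},\rho_m)$, and proceeds by induction on $m$.

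The analytic core you flag as hardest---that the shrinking of a vertical segment (equivalently, the breaking of a horizontal segment at a critical locus) produces a smooth boundary face---is handled in the paper by quoting the structure of the single-cell moduli spaces $\bcalM_2(f_{a_j};W_i^{(j)},L_i^{(j)})$ from the companion paper~[Wa1], together with the explicit product description $\bS_{1,0}(\wf;\wW_i^{(j)},\wL_i^{(j)})=[(\wcalA_{p_i}\cap\wW_i^{(j)})\times(\wcalD_{p_i}\cap\wL_i^{(j)})]_{\geq}$. So the paper does not re-prove the stable/unstable-manifold gluing you sketch; it imports it. Your proposal is correct in outline, but be aware that the cell-by-cell framing needs to be replaced by the block-by-block one for the dimension and rank counts to come out right.
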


We prove Proposition~\ref{prop:M2(f)} in the rest of this section\footnote{The settings and the proof for Proposition~\ref{prop:M2(f)} are rather complicated. Perhaps a good way to believe Proposition~\ref{prop:M2(f)} is to consider $\bacalM_2(\wf)$ as a piecewise smooth approximation of the smooth manifold $\calM_2(\hat{\xi})$ in \S\ref{ss:motivation}.}. In \S\ref{ss:mod_cell}, we will first study the piecewise smooth structure of the moduli space $\bacalM_2(\wf;\rho)$ in the case where $\rho$ has only one cell. We will then reconstruct $\bacalM_2(\wf)_\Z$ in \S\ref{ss:mod_long} from basic pieces by iterated fiber products and study its piecewise smooth structure.

\subsection{Moduli space of AL-paths in a cell}\label{ss:mod_cell}

Here we prove Proposition~\ref{prop:M2(f)} in the case when $|\rho|=1$ (Lemmas~\ref{lem:M2WW}, \ref{lem:M2longcell} (iv)) and prove some lemmas (Lemmas~\ref{lem:M2WL}, \ref{lem:M2LW}, \ref{lem:M2LL}, \ref{lem:M2longcell}) that will be necessary later. 
For subsets $A,B\subset \wW_i^{(j)}$, we write
\[ \begin{split}
  \bacalM_2(\wf;A,B)&=\bacalM_2(\wf;\wW_i^{(j)})\cap(A\times B),\\
  \bS_{r,m}(\wf;A,B)&=\bS_{r,m}(\wf;\wW_i^{(j)})\cap(A\times B),\\
  \bT_{r,m}(\wf;A,B)&=\bT_{r,m}(\wf;\wW_i^{(j)})\cap(A\times B).
\end{split}\]

\subsubsection{Moduli space of AL-paths in a cell on $I_j$, $j$ even}

\begin{Lem}\label{lem:M2WL}
Suppose that $j$ is even. The moduli space $\bacalM_2(\wf;\wW_i^{(j)},\wL_i^{(j)})$ is the union of two smooth manifolds $S=\bS_{0,0}(\wf;\wW_i^{(j)},\wL_i^{(j)})$ and $S'=\bS_{1,0}(\wf;\wW_i^{(j)},\wL_i^{(j)})$ with corners, such that
\begin{enumerate}
\item The codimension $q$ stratum of $S$ consists of AL-paths $\gamma$ with $\deg\,\gamma=q+1$. In particular, $\partial S
=\bS_{0,1}(\wf;\wW_i^{(j)},\wL_i^{(j)})\cup \bT_{0,0}(\wf;\wW_i^{(j)},\wL_i^{(j)})$.
\item The codimension $q$ stratum of $S'$ consists of AL-paths $\gamma$ with $\deg\,\gamma=q+1$. In particular, $\partial S'
=\bS_{0,1}(\wf;\wW_i^{(j)},\wL_i^{(j)})\cup \bT_{1,0}(\wf;\wW_i^{(j)},\wL_i^{(j)})$.
\end{enumerate}
\end{Lem}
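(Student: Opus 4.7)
The plan is to give explicit global parameterizations of $S=\bS_{0,0}(\wf;\wW_i^{(j)},\wL_i^{(j)})$ and $S'=\bS_{1,0}(\wf;\wW_i^{(j)},\wL_i^{(j)})$ and deduce their smooth-manifold-with-corners structure from parametrized Morse normal forms near the critical locus $p_i$. The key observation is that $-\widetilde{\xi}$ preserves each fiber of $\widetilde{\kappa}$, so every horizontal segment of an AL-path in $\bacalM_2(\wf;\wW_i^{(j)},\wL_i^{(j)})$ lies in a single fiber $\widetilde{\kappa}^{-1}(s)$ with $s\in I_j$; since $j$ is even and $p_i$ is the only critical locus in the slab, all of the flow behavior inside the slab is controlled by a smoothly varying family of Morse functions on the fibers.

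For $S$, I parameterize by $\Psi(y,t)=(\Phi_{+\widetilde{\xi}}^{t}(y),y)$, where $y\in\wL_i^{(j)}$ and $t\in(0,T^+(y)]$; here $T^+(y)\in(0,\infty]$ is the first time the fiberwise upward flow from $y$ exits the slab through $\wGamma_i^{(j)}$, and $T^+(y)=+\infty$ precisely when $y\in\wcalD_{p_i}\cap\wL_i^{(j)}$. In fiberwise Morse normal coordinates at $p_i(s)$, the flow $\Phi^t_{+\widetilde{\xi}}$ is linear with eigenvalues depending smoothly on $s$, so substituting a variable $u=e^{-ct}$ for a suitable $c>0$ compactifies the $t$-ray to $[0,1]$ and attaches a smooth boundary face $u=0$ canonically identified with broken flow lines that converge to $p_i(s)$ and then continue along a branch of $\wcalD_{p_i}(s)$, namely with $\bS_{0,1}(\wf;\wW_i^{(j)},\wL_i^{(j)})$. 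The remaining codimension-one faces come from $x\in\wGamma_i^{(j)}$ (degeneracy (3)) and from $\widetilde{\kappa}(y)\in\partial I_j$ or $\widetilde{\kappa}(x)\in\partial I_j$ (degeneracy (2)); together with the automatic degeneracy (4) (the endpoint lying in $\wL_i^{(j)}$), this produces $\deg\gamma=q+1$ on the codimension-$q$ stratum of $S$.

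For $S'$, a rank-$1$ zero-break AL-path is of type $(V,H)$ or $(H,V,H)$, and I parameterize such paths by $(s_1,s_2,\sigma_a,t_1,\sigma_d)$, where $s_1\geq s_2$ are the $\widetilde{\kappa}$-values of the endpoints of the vertical segment along $p_i$, $\sigma_a$ is a local direction of $\wcalA_{p_i}$ at $p_i(s_1)$ (a point of the ascending link), $t_1\geq 0$ is the upward flow time along $\sigma_a$ from $p_i(s_1)$ reaching the initial point $x$ (with $t_1=0$ giving type $(V,H)$), and $\sigma_d$ is a local direction of $\wcalD_{p_i}$ at $p_i(s_2)$ determining the downward flow that meets $\wL_i^{(j)}$ at the unique endpoint $y$. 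Using the same Morse normal form, $S'$ becomes a smooth manifold with corners; its codimension-one boundary consists of the collapse locus $s_1=s_2$ (where the vertical segment shrinks so the AL-path degenerates to a broken rank-$0$ flow through $p_i$, matching $\bS_{0,1}$) together with the faces $x\in\wGamma_i^{(j)}$, $s_1\in\partial I_j$, and $s_2\in\partial I_j$, which make up $\bT_{1,0}(\wf;\wW_i^{(j)},\wL_i^{(j)})$; the degeneracy identification $\deg\gamma=q+1$ is verified as for $S$.

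The main obstacle is showing that the two parameterizations glue smoothly along the shared boundary $\bS_{0,1}$: the $(y,t)$-chart of $S$ (after the compactifying change $u=e^{-ct}$) and the $(s_1,s_2,\sigma_a,t_1,\sigma_d)$-chart of $S'$ at $s_1=s_2$ must combine into a single smooth manifold-with-corners structure near the broken-flow stratum. In fiberwise Morse normal form this becomes explicit: the eigenvalues at $p_i(s)$ depend smoothly on $s$, the loci $\wcalA_{p_i}$ and $\wcalD_{p_i}$ are coordinate subspaces, and the gluing reduces to the standard smoothness of the broken-trajectory compactification of Morse theory. Compactness of $S$ and $S'$ after excising $b^{-1}(\Delta_{\wM})$ follows since $\wW_i^{(j)}\times\wL_i^{(j)}$ is compact and all limit endpoints fall into the boundary strata identified above, and finally the interiors of $S$ and $S'$ are disjoint in $\bacalM_2$ because one has no vertical segment and the other has a vertical segment of positive length.
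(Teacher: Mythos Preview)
Your treatment of $S'=\bS_{1,0}$ is essentially correct but unnecessarily elaborate: the paper observes directly that
\[
\bS_{1,0}(\wf;\wW_i^{(j)},\wL_i^{(j)})=\Bigl[(\wcalA_{p_i}\cap \wW_i^{(j)})\times(\wcalD_{p_i}\cap \wL_i^{(j)})\Bigr]_{\geq},
\]
which is manifestly a product of smooth manifolds cut by the single inequality $\widetilde\kappa(x)\ge\widetilde\kappa(y)$, so the manifold-with-corners structure and the identification of the faces are immediate. Your $(s_1,s_2,\sigma_a,t_1,\sigma_d)$ chart is a reparametrization of this same product and buys nothing extra. Likewise, the gluing of $S$ and $S'$ along $\bS_{0,1}$ that you work out is not asserted by the lemma; only the separate corner structures are claimed, and orientation compatibility is handled later (Lemma~3.1).

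There is, however, a genuine gap in your compactification of $S=\bS_{0,0}$. The chart $(y,t)\mapsto(\Phi^t_{+\widetilde\xi}(y),y)$ together with the substitution $u=e^{-ct}$ does \emph{not} produce $\bS_{0,0}$ as a manifold with corners having $\bS_{0,1}$ as a codimension-one face. Work in fiberwise Morse coordinates for $\ind p_i=1$, say $f=-x_1^2+x_2^2$: then $x=(y_1u^{1/2},\,y_2u^{-1/2})$, and the limit of $(x,y)$ as $u\to 0$, $y_2\to 0$ depends on the ratio $y_2/u^{1/2}$, so your map does not extend continuously to the added locus $\{u=0\}$. Equivalently, the set $\{u=0\}$ in your chart has dimension $\dim(\wcalD_{p_i}\cap\wL_i^{(j)})$, which for $\ind p_i=1$ is one less than $\dim\bS_{0,1}$; your chart collapses the $\wcalA_{p_i}$-direction of the broken-flow stratum. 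The naive time-exponential compactification is not the Morse trajectory compactification.

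The paper avoids this by using the trivialization $\wW_i^{(j)}\cong W_i^{(j)}\times I_j$ (available since $j$ is even) to write
\[
\bS_{0,0}(\wf;\wW_i^{(j)},\wL_i^{(j)})\cong \bcalM_2(f_{a_j};W_i^{(j)},L_i^{(j)})\times I_j,
\]
and then invokes the known corner structure of the fiberwise compactified Morse moduli space $\bcalM_2(f_{a_j};W,L)$ (cited from \cite{Wa1}). If you want to argue directly, you need a genuine gluing chart near the broken-flow locus---for instance coordinates of the type $(\eta,a,b)\mapsto (\eta a,b)\times(a,\eta b)$ with $\eta\in[0,1]$, as used later in the proof of Lemma~4.3---rather than the $(y,e^{-ct})$ chart.
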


\begin{Lem}\label{lem:M2LW}
Suppose that $j$ is even. The moduli space $\bacalM_2(\wf;\wGamma_i^{(j)},\wW_i^{(j)})$ is the union of two smooth manifolds $S=\bS_{0,0}(\wf;\wGamma_i^{(j)},\wW_i^{(j)})$ and $S'=\bS_{1,0}(\wf;\wGamma_i^{(j)},\wW_i^{(j)})$ with corners, such that
\begin{enumerate}
\item The codimension $q$ stratum of $S$ consists of AL-paths $\gamma$ with $\deg\,\gamma=q+1$. In particular, $\partial S
=\bS_{0,1}(\wf;\wGamma_i^{(j)},\wW_i^{(j)})\cup \bT_{0,0}(\wf;\wGamma_i^{(j)},\wW_i^{(j)})$.
\item The codimension $q$ stratum of $S'$ consists of AL-paths $\gamma$ with $\deg\,\gamma=q+1$. In particular, $\partial S'
=\bS_{0,1}(\wf;\wGamma_i^{(j)},\wW_i^{(j)})\cup \bT_{1,0}(\wf;\wGamma_i^{(j)},\wW_i^{(j)})$.
\end{enumerate}
\end{Lem}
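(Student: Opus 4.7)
The plan is to mirror the proof of Lemma~\ref{lem:M2WL} under the time-reversal symmetry that sends $(\wf,\widetilde{\xi})\mapsto (-\wf,-\widetilde{\xi})$. This symmetry interchanges the top and bottom level surfaces $\wGamma_i^{(j)}$ and $\wL_i^{(j)}$ of the cell $B_i^{(j)}$ and reverses each AL-path, identifying $\bacalM_2(\wf;\wGamma_i^{(j)},\wW_i^{(j)})$ canonically with the moduli space treated in Lemma~\ref{lem:M2WL} for the time-reversed function. The smooth structure and the boundary decomposition therefore transfer after translating the notation.

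For a direct proof, I would first observe that the cell $B_i^{(j)}$ for $j$ even contains exactly one critical locus $p_i$ of $\wf$, so every AL-path in $\wW_i^{(j)}$ has at most one vertical segment; hence $\bacalM_2(\wf;\wGamma_i^{(j)},\wW_i^{(j)})=\bS_{0,0}\cup \bS_{1,0}$. For $S=\bS_{0,0}$, each element is a (possibly broken) horizontal flow line from some $x\in \wGamma_i^{(j)}$, and the inclusion $\{(x,y):y=\Phi_{-\widetilde{\xi}}^{t}(x)\text{ for some }t\geq 0\}\hookrightarrow \wGamma_i^{(j)}\times \wW_i^{(j)}$ presents $S$ as a smooth $3$-manifold with corners whose codimension-$q$ stratum consists of AL-paths with $\deg\,\gamma=q+1$. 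The boundary decomposes into $\bS_{0,1}$ (flow lines passing through $p_i$) and $\bT_{0,0}$ (paths meeting the horizontal face $\partial \wW_i^{(j)}\cap\widetilde{\kappa}^{-1}(\partial I_j)$).

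For $S'=\bS_{1,0}$, each element has the form $(H,V)$ or $(H,V,H)$ with the vertical segment lying on $p_i$; the initial horizontal segment must terminate at $p_i$, so the start point lies in $\wcalA_{p_i}(\widetilde{\xi})\cap \wGamma_i^{(j)}$. The parametrized Morse--Smale condition guarantees that this intersection is transverse in $\wM$, so it is a smooth submanifold of $\wGamma_i^{(j)}$ of the expected dimension. I would then parameterize $S'$ by the triple (start point on $\wcalA_{p_i}\cap \wGamma_i^{(j)}$, vertical travel parameter on $p_i$, outgoing horizontal flow time) and use standard transversality to obtain the smooth manifold-with-corners structure. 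The boundary $\bS_{0,1}\cup\bT_{1,0}$ arises from the vertical segment degenerating to zero length (contributing $\bS_{0,1}$) and from the path meeting the horizontal face $\partial \wW_i^{(j)}\cap\widetilde{\kappa}^{-1}(\partial I_j)$ (contributing $\bT_{1,0}$).

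The hard part will be verifying smoothness and orientation compatibility at the junction $\bS_{0,1}$, where $S$ and $S'$ are glued, via local analysis near $p_i$ in parametrized Morse coordinates together with the smooth dependence of the flow on the initial condition. A minor case-analysis is needed because $\bS_{1,0}$ depends on $\ind\,p_i$: if $\ind\,p_i=2$ then $\wcalA_{p_i}=p_i$ lies strictly below $\wGamma_i^{(j)}$, so $\bS_{1,0}=\emptyset$ and the conclusion is trivial; if $\ind\,p_i=0$ then no outgoing horizontal segment is possible from a fiberwise minimum, so $S'$ consists only of $(H,V)$-configurations; only when $\ind\,p_i=1$ does the full $3$-dimensional picture with both $(H,V)$ and $(H,V,H)$ configurations arise. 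In all three cases one checks that the smooth structure and boundary identifications match the stated formulas.
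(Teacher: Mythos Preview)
Your opening symmetry argument is exactly the paper's proof: it reads in full ``By symmetry, the proof is completely analogous to that of Lemma~\ref{lem:M2WL}.'' So on approach you match the paper precisely.

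Your direct argument is a reasonable elaboration, but two remarks. First, your parametrization of $S'$ by (start point, vertical travel, outgoing flow time) is more complicated than necessary: following the pattern in the proof of Lemma~\ref{lem:M2WL}, one has directly
\[
\bS_{1,0}(\wf;\wGamma_i^{(j)},\wW_i^{(j)})=\Bigl[(\wcalA_{p_i}\cap \wGamma_i^{(j)})\times (\wcalD_{p_i}\cap \wW_i^{(j)})\Bigr]_{\geq},
\]
from which the manifold-with-corners structure and the boundary description are immediate, without any case analysis on $\ind\,p_i$. Second, the ``hard part'' you flag---orientation compatibility of $S$ and $S'$ along $\bS_{0,1}$---is not part of this lemma; the statement only asserts that $\bacalM_2$ is the union of two manifolds with corners and identifies their individual boundaries. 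That gluing consistency is handled separately in Lemma~\ref{lem:coori_consistent}.
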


\begin{Lem}\label{lem:M2LL}
Suppose that $j$ is even. The moduli space $\bacalM_2(\wf;\wGamma_i^{(j)},\wL_i^{(j)})$ is the union of two smooth manifolds $S=\bS_{0,0}(\wf;\wGamma_i^{(j)},\wL_i^{(j)})$ and $S'=\bS_{1,0}(\wf;\wGamma_i^{(j)},\wL_i^{(j)})$ with corners, such that
\begin{enumerate}
\item The codimension $q$ stratum of $S$ consists of AL-paths $\gamma$ with $\deg\,\gamma=q+2$. In particular, $\partial S
=\bS_{0,1}(\wf;\wGamma_i^{(j)},\wL_i^{(j)})\cup \bT_{0,0}(\wf;\wGamma_i^{(j)},\wL_i^{(j)})$.
\item The codimension $q$ stratum of $S'$ consists of AL-paths $\gamma$ with $\deg\,\gamma=q+2$. In particular, $\partial S'
=\bS_{0,1}(\wf;\wGamma_i^{(j)},\wL_i^{(j)})\cup \bT_{1,0}(\wf;\wGamma_i^{(j)},\wL_i^{(j)})$.
\end{enumerate}
\end{Lem}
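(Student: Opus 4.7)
The plan is to deduce the lemma from Lemmas~\ref{lem:M2WL} and \ref{lem:M2LW} by imposing one further endpoint constraint: $\bacalM_2(\wf;\wGamma_i^{(j)},\wL_i^{(j)})$ is obtained from $\bacalM_2(\wf;\wW_i^{(j)},\wL_i^{(j)})$ by further restricting the first endpoint from the slab $\wW_i^{(j)}$ to its codimension-one level surface $\wGamma_i^{(j)}$. Since $\widetilde{\xi}$ is fiberwise-transverse to $\wGamma_i^{(j)}$ away from critical points, the first-endpoint evaluation is a submersion on the top stratum and the pullback along $\wGamma_i^{(j)}\hookrightarrow\wW_i^{(j)}$ is transverse, so $S$ and $S'$ inherit smooth manifold-with-corners structures from the pieces supplied by Lemma~\ref{lem:M2WL}. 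Both endpoint conditions (items (3) and (4) of the degeneracy list) are automatically imposed on every AL-path in $\bacalM_2(\wf;\wGamma_i^{(j)},\wL_i^{(j)})$, so the base degeneracy on the top stratum is $2$ and the codimension-$q$ stratum consists of AL-paths with exactly $q$ further degeneracies of type (1) or (2).

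Next, I would describe the two components explicitly. A rank-$0$ unbroken AL-path from $\wGamma_i^{(j)}$ to $\wL_i^{(j)}$ is simply a $-\widetilde{\xi}$-flow-line segment starting at some $x\in\wGamma_i^{(j)}|_s$, staying within $\widetilde{\kappa}^{-1}(s)$, and reaching $\wL_i^{(j)}|_s$ without being absorbed at the critical locus $p_i$. Thus $S$ is diffeomorphic, via the starting-point projection, to $\wGamma_i^{(j)}\setminus(\wGamma_i^{(j)}\cap\wcalA_{p_i})$, a two-dimensional smooth manifold with corners at $\widetilde{\kappa}=a_j,b_j$. For $S'$, a dimension count on the fiberwise intersections $\wcalA_{p_i(s)}\cap\wGamma_i^{(j)}|_s$ and $\wcalD_{p_i(s)}\cap\wL_i^{(j)}|_s$ shows that the rank-$1$ stratum is empty unless $\mathrm{ind}\,p_i=1$; in that case an element is a concatenation (horizontal from $x\in\wcalA_{p_i(s_1)}\cap\wGamma_i^{(j)}|_{s_1}$ down to $p_i(s_1)$, vertical from $p_i(s_1)$ to $p_i(s_2)$ along the critical locus, horizontal from $p_i(s_2)$ down to $y\in\wcalD_{p_i(s_2)}\cap\wL_i^{(j)}|_{s_2}$) with $a_j\le s_2\le s_1\le b_j$, and the parametrized Morse--Smale transversality identifies $S'$ with a $2$-dimensional smooth manifold with corners.

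For the boundary formulas I would enumerate the codimension-$1$ faces. The codimension-$1$ faces of $S$ arise either when $x$ tends to $\wGamma_i^{(j)}\cap\wcalA_{p_i}$, in which case the flow line breaks at $p_i$ and the limiting configuration is a rank-$0$ AL-path with one break, giving an element of $\bS_{0,1}(\wf;\wGamma_i^{(j)},\wL_i^{(j)})$; or when $\widetilde{\kappa}(x)$ reaches $a_j$ or $b_j$, giving $\bT_{0,0}$. The codimension-$1$ faces of $S'$ arise either when $s_1=s_2$, in which case the vertical segment collapses and the same broken configuration in $\bS_{0,1}$ is produced; or when $s_1=b_j$ or $s_2=a_j$, giving $\bT_{1,0}$. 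This accounts for all codimension-$1$ strata and yields the claimed boundary identities. The main obstacle is the rigorous description of the limit behavior at the common face $\bS_{0,1}$, where $S$ and $S'$ approach from opposite sides of the critical-locus degeneracy; a local model computation near an index-$1$ Morse critical locus, parallel to what is implicit in the proofs of Lemmas~\ref{lem:M2WL} and \ref{lem:M2LW} but with both endpoints now restricted to level surfaces, handles this and completes the proof.
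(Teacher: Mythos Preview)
Your approach is essentially the same as the paper's: the paper observes that $\bacalM_2(\wf;\wGamma_i^{(j)},\wL_i^{(j)})=\bacalM_2(\wf;\wW_i^{(j)},\wL_i^{(j)})\cap(\wGamma_i^{(j)}\times\wL_i^{(j)})$, notes this intersection is strata transversal, and invokes Lemma~\ref{lem:M2WL}. You carry out the same reduction (restricting the first endpoint to the codimension-one level surface $\wGamma_i^{(j)}$ and appealing to transversality of the endpoint evaluation), then add explicit descriptions of $S$, $S'$ and their codimension-$1$ faces that the paper leaves implicit; this extra detail is correct but not needed beyond the transversality observation.
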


\begin{Lem}\label{lem:M2WW}
Suppose that $j$ is even. The moduli space $\bacalM_2(\wf;\wW_i^{(j)})$ is compact and $\bacalM_2(\wf;\wW_i^{(j)})-\Delta_{\wW_i^{(j)}}$ is the union of two noncompact smooth manifolds $S=\bS_{0,0}(\wf;\wW_i^{(j)})-\Delta_{\wW_i^{(j)}}$ and $S'=\bS_{1,0}(\wf;\wW_i^{(j)})-\Delta_{\wW_i^{(j)}}$ with corners, such that
\begin{enumerate}
\item The codimension $q$ stratum of $S$ consists of AL-paths $\gamma$ with $\deg\,\gamma=q$. In particular, $\partial S
=\bS_{0,1}(\wf;\wW_i^{(j)})\cup\bT_{0,0}(\wf;\wW_i^{(j)})-\Delta_{\wW_i^{(j)}}$.
\item The codimension $q$ stratum of $S'$ consists of AL-paths $\gamma$ with $\deg\,\gamma=q$. In particular, $\partial S'
=\bS_{0,1}(\wf;\wW_i^{(j)})\cup\bT_{1,0}(\wf;\wW_i^{(j)})-\Delta_{\wW_i^{(j)}}$.
\end{enumerate}
\end{Lem}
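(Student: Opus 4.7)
} My plan is to parametrize each of the two strata $\bS_{0,0}(\wf;\wW_i^{(j)})$ and $\bS_{1,0}(\wf;\wW_i^{(j)})$ by an explicit map from a manifold with corners built out of the fiberwise flow of $-\widetilde{\xi}$ and the ascending/descending loci of the unique critical locus $p_i$ inside the cell, read off the corner stratification directly from the four degeneracy conditions, and then verify compactness of the full closure and disjointness of the codimension-zero strata.

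For the rank-zero stratum: because $j$ is even there is no level-exchange bifurcation and the flow of $-\widetilde{\xi}$ preserves $\widetilde{\kappa}$, so an AL-path in $\bS_{0,0}$ is a single $-\widetilde{\xi}_s$-trajectory segment in the fiber $\Sigma_s\cap\wW_i^{(j)}$. I would define
\[
\Omega_0=\bigl\{(x,t)\in \wW_i^{(j)}\times[0,\infty)\ :\ \Phi_{-\widetilde\xi}^{\tau}(x)\in\wW_i^{(j)}\ \text{for}\ 0\le\tau\le t\bigr\}
\]
and the smooth map $F(x,t)=(x,\Phi_{-\widetilde\xi}^{-t}(x))$. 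Away from trajectories that meet $p_i$, $F$ is injective with smooth inverse given by the exit time, so it realizes an open subset of $\bS_{0,0}-\Delta_{\wW_i^{(j)}}$ as a smooth $4$-manifold with corners. Near trajectories that hit $p_i$, I would use the fiberwise Morse normal form for $\widetilde\xi_s$ to reparametrize the flow so that the endpoint map $F$ extends smoothly to a corner where the path becomes broken; this produces exactly the codimension-one face $\bS_{0,1}(\wf;\wW_i^{(j)})$. The remaining three degeneracies, namely $\sigma_1(\mu_1)\in \wGamma_i^{(j)}\cup \wL_i^{(j)}$, $\sigma_n(\nu_n)\in \wGamma_i^{(j)}\cup \wL_i^{(j)}$, and touching the horizontal walls $\widetilde{\kappa}^{-1}(a_j)\cup \widetilde{\kappa}^{-1}(b_j)$, cut out pairwise transverse faces, so the codimension-$q$ stratum is precisely the locus where $q$ independent degeneracies occur; this matches item (1) and the boundary formula $\partial S=\bS_{0,1}\cup\bT_{0,0}-\Delta_{\wW_i^{(j)}}$.

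For the rank-one stratum: an AL-sequence has the form $(x,p_i(s_1),p_i(s_2),y)$ with $s_1\ge s_2$ in $I_j$ and $\wf(p_i(s_1))\ge \wf(p_i(s_2))$, where $x\in\wcalA_{p_i(s_1)}(\widetilde\xi)\cap\wW_i^{(j)}$ lies on the fiberwise ascending manifold of $p_i(s_1)$ and $y\in\wcalD_{p_i(s_2)}(\widetilde\xi)\cap\wW_i^{(j)}$ on the fiberwise descending manifold of $p_i(s_2)$. The parameter space is a smooth $4$-manifold with corners fibered over the triangle $\{(s_1,s_2)\in I_j\times I_j:s_1\ge s_2\}$ by the fiberwise ascending/descending bundles over the arc $p_i$, whose fiber dimensions always sum to $2=\dim\Sigma_s$ independent of the index of $p_i$. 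Smoothness of the parametrization follows from the parametrized Morse--Smale condition, and the codimension-$q$ corner stratum is again the locus where exactly $q$ of the four degeneracies hold (the relevant boundary components coming from $s_1$ or $s_2$ reaching $\partial I_j$, from $x$ or $y$ landing on $\wGamma_i^{(j)}\cup\wL_i^{(j)}$, and from the collapse $s_1=s_2$). The last of these is crucial: when $s_1=s_2$ the vertical segment collapses and the pair of horizontal segments merges into a single broken flow line through $p_i$, which identifies this face of $\bS_{1,0}$ with the broken-flow-line face $\bS_{0,1}$ of $\bS_{0,0}$; the gluings agree because both parametrizations restrict to the same broken flow line through $p_i(s_1)$.

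Finally, compactness of $\bacalM_2(\wf;\wW_i^{(j)})$ follows from the compactness of the cell together with the standard broken-trajectory compactness for fiberwise Morse flows: any sequence of AL-paths with endpoints in the compact cell has a subsequence with convergent endpoints and convergent intermediate data (the slowdown of trajectories near $p_i$ being absorbed into a limiting break or into an extra vertical segment), and the limit lies in one of $\bS_{0,0},\bS_{1,0}$ or their common face $\bS_{0,1}$. Disjointness of the codimension-zero strata is immediate since a rank-zero AL-path avoids $p_i$ while the interior of $\bS_{1,0}$ has a nondegenerate vertical segment along $p_i$. The main technical obstacle is the smooth extension of $F$ across the locus of trajectories that reach $p_i$ (where the flow-time blows up) and the verification that this extension matches smoothly with the $s_1=s_2$ collapse in $\bS_{1,0}$; both points reduce to an explicit computation in the fiberwise Morse normal form, and this is where most of the technical work will sit.
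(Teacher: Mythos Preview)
Your approach is correct and captures the same content as the paper's proof, but the paper's argument is considerably more compressed in two ways that you should be aware of. First, since $j$ is even the paper exploits the fixed trivialization $\tau_i^{(j)}:\wW_i^{(j)}\cong W_i^{(j)}\times I_j$ to write
\[
\bS_{0,0}(\wf;\wW_i^{(j)})\cong \bcalM_2(f_{a_j};W_i^{(j)})\times I_j,
\]
reducing the rank-zero case to the moduli space of flow lines for the single Morse function $f_{a_j}$ on a surface, and then cites \cite[Lemma~2.15]{Wa1} for the corner structure of that $2$-dimensional moduli space; your flow-time parametrization $(x,t)\mapsto (x,\Phi^t_{-\widetilde\xi}(x))$ and the Morse-normal-form reparametrization near $p_i$ are exactly what that cited lemma encodes. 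Second, for $\bS_{1,0}$ the paper writes down the closed-form identity
\[
\bS_{1,0}(\wf;\wW_i^{(j)})=\Bigl[(\wcalA_{p_i}\cap \wW_i^{(j)})\times (\wcalD_{p_i}\cap \wW_i^{(j)})\Bigr]_{\geq},
\]
from which the corner structure and boundary are read off directly; your bundle-over-the-triangle description is the same space (since $s_1=\widetilde\kappa(x)$, $s_2=\widetilde\kappa(y)$), but note that the AL-\emph{sequence} in $X_\rho$ for a single cell is just the pair $(x,y)$, not $(x,p_i(s_1),p_i(s_2),y)$. Both presentations yield the same face $\bS_{0,1}$ at $\widetilde\kappa(x)=\widetilde\kappa(y)$ and the $\bT$-faces from $\partial\wW_i^{(j)}$.
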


We write $I_j=[a_j,b_j]$. Let $W_i^{(j)}=\widetilde{\kappa}^{-1}(a_j)\cap\wW_i^{(j)}$ and $L_i^{(j)}=\widetilde{\kappa}^{-1}(a_j)\cap\wL_i^{(j)}$. We fix a trivialization $\tau_i^{(j)}:\wW_i^{(j)}\stackrel{\cong}{\to} W_i^{(j)}\times I_j$ and identify both sides through $\tau_i^{(j)}$. For subsets $A,B$ of $\wW_i^{(j)}$, write 
\[ [A\times B]_{\geq}
=\{(x,y)\in A\times B;\widetilde{\kappa}(x)\geq\widetilde{\kappa}(y)\}.\]

\begin{proof}[Proof of Lemma~\ref{lem:M2WL}] We first check the structures of manifolds with corners for $\bS_{0,0}(\wf;\wW_i^{(j)},\wL_i^{(j)})$ and $\bS_{1,0}(\wf;\wW_i^{(j)},\wL_i^{(j)})$. Since AL-sequences in $\bS_{0,0}(\wf;\wW_i^{(j)},\wL_i^{(j)})$ are of rank 0, we have the diffeomorphism
\[ \bS_{0,0}(\wf;\wW_i^{(j)},\wL_i^{(j)})\cong\bcalM_2(f_{a_j};W_i^{(j)},L_i^{(j)})\times I_j \]
induced by $\tau_i^{(j)}$, where $\bcalM_2(f_{a_j};W_i^{(j)},L_i^{(j)})$ is the space of gradient flow-lines in a fiber. Now the assertion for $\bS_{0,0}(\wf;\wW_i^{(j)},\wL_i^{(j)})$ follows from \cite[Lemma~2.14]{Wa1} about the smooth structure of the fiberwise moduli space $\bcalM_2$. 

The assertion for $\bS_{1,0}(\wf;\wW_i^{(j)},\wL_i^{(j)})$ follows immediately from the following identity, which holds by definition.
\[ \bS_{1,0}(\wf;\wW_i^{(j)},\wL_i^{(j)})=\Bigl[(\wcalA_{p_i}\cap \wW_i^{(j)})\times (\wcalD_{p_i}\cap \wL_i^{(j)})\Bigr]_{\geq}. \]

That $\bacalM_2(\wf;\wW_i^{(j)},\wL_i^{(j)})=\bS_{0,0}(\wf;\wW_i^{(j)},\wL_i^{(j)})\cup \bS_{1,0}(\wf;\wW_i^{(j)},\wL_i^{(j)})$ follows immediately from the definition of AL-path.
\end{proof}

\begin{proof}[Proof of Lemma~\ref{lem:M2LW}] By symmetry, the proof is completely analogous to that of Lemma~\ref{lem:M2WL}.
\end{proof}

\begin{proof}[Proof of Lemma~\ref{lem:M2LL}] By definition, we have
\[ \bacalM_2(\wf;\wGamma_i^{(j)},\wL_i^{(j)})=\bacalM_2(\wf;\wW_i^{(j)},\wL_i^{(j)})\cap (\wGamma_i^{(j)}\times \wL_i^{(j)}). \]
The intersection is strata transversal. The assertion follows immediately from Lemma~\ref{lem:M2WL}. 
\end{proof}

\begin{proof}[Proof of Lemma~\ref{lem:M2WW}]
We have 
\[ \bS_{0,0}(\wf;\wW_i^{(j)})\cong \bcalM_2(f_{a_j};W_i^{(j)})\times I_j. \]
The assertion for $\bS_{0,0}(\wf;\wW_i^{(j)})$ follows from \cite[Lemma~2.15]{Wa1}. The assertion for $\bS_{1,0}(\wf;\wW_i^{(j)})$ follows from the following identity.
\[ \bS_{1,0}(\wf;\wW_i^{(j)})=\Bigl[(\wcalA_{p_i}\cap \wW_i^{(j)})\times (\wcalD_{p_i}\cap \wW_i^{(j)})\Bigr]_{\geq}. \]
\end{proof}

\subsubsection{Moduli space of AL-paths in a cell on $I_j$, $j$ odd}

\begin{Lem}\label{lem:M2longcell} Suppose that $j$ is odd and put $k=k_j$. Then the following hold.
\begin{enumerate}
\item[(i)] $\bacalM_2(\wf;\widetilde{W}_k^{(j)},\widetilde{L}_k^{(j)})$ is the union of two smooth manifolds $\bS_{0,0}(\wf;\wW_k^{(j)},\wL_k^{(j)})$ and $\bS_{1,0}(\wf;\wW_k^{(j)},\wL_k^{(j)})$ with corners. 
\item[(ii)] $\bacalM_2(\wf;\wGamma_k^{(j)},\widetilde{W}_k^{(j)})$ is the union of two smooth manifolds $\bS_{0,0}(\wf;\wGamma_k^{(j)},\wW_k^{(j)})$ and $\bS_{1,0}(\wf;\wGamma_k^{(j)},\wW_k^{(j)})$ with corners. 
\item[(iii)] $\bacalM_2(\wf;\wGamma_k^{(j)},\widetilde{L}_k^{(j)})$ is the union of two smooth manifolds $\bS_{0,0}(\wf;\wGamma_k^{(j)},\wL_k^{(j)})$ and $\bS_{1,0}(\wf;\wGamma_k^{(j)},\wL_k^{(j)})$ with corners.
\item[(iv)] $\bacalM_2(\wf;\wW_k^{(j)})-\Delta_{\wW_k^{(j)}}$ is the union of two smooth manifolds $\bS_{0,0}(\wf;\wW_k^{(j)})-\Delta_{\wW_k^{(j)}}$ and $\bS_{1,0}(\wf;\wW_k^{(j)})-\Delta_{\wW_k^{(j)}}$ with corners. 
\end{enumerate}
The boundaries are similar to Lemmas~\ref{lem:M2WL}, \ref{lem:M2LW}, \ref{lem:M2LL} and \ref{lem:M2WW} respectively. 
\end{Lem}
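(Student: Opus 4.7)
The plan is to carry out each of (i)--(iv) in parallel with the corresponding single-cell statement (Lemmas~\ref{lem:M2WL}, \ref{lem:M2LW}, \ref{lem:M2LL}, \ref{lem:M2WW}), absorbing the one new feature of a double cell: the region $\wW_k^{(j)}$ contains two critical loci $p_k$ and $p_{k+1}$ that exchange $f_s$-values across $I_j$. Since the boundary faces $\wGamma_k^{(j)}=\bigcup_s f_s^{-1}(\gamma_{k+2}(s))$ and $\wL_k^{(j)}=\bigcup_s f_s^{-1}(\gamma_k(s))$ are smooth families of regular level surfaces (thanks to the conditions imposed on $\gamma_k$, $\gamma_{k+2}$ and the smallness of $\eta$), one can still choose a smooth trivialization $\tau_k^{(j)}:\wW_k^{(j)}\xrightarrow{\cong} W_k^{(j)}\times I_j$ compatible with $\widetilde{\kappa}$ and sending the horizontal faces to slices of $W_k^{(j)}\times I_j$.

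For the rank~0 piece, an AL-path is a single gradient flow line of $f_s$ inside the fiber $\widetilde{\kappa}^{-1}(s)$, so $\tau_k^{(j)}$ identifies $\bS_{0,0}(\wf;A,B)$ with the family over $I_j$ of fiberwise moduli spaces $\bcalM_2(f_s;A\cap\widetilde{\kappa}^{-1}(s),B\cap\widetilde{\kappa}^{-1}(s))$. Applying \cite[Lemma~2.14]{Wa1} for parts (i)--(iii) and \cite[Lemma~2.15]{Wa1} for part (iv) fiberwise, and using that $f_s$ and $\xi$ depend smoothly on $s$ with $f_s$ Morse throughout $I_j$, the family assembles into a smooth manifold with corners whose boundary strata come from the two values $s=a_j,b_j$ and from degenerations of the fiberwise flow line at critical loci. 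This matches the claimed stratification, just as in the proofs of Lemmas~\ref{lem:M2WL}--\ref{lem:M2WW}. No special treatment is required for the bifurcation parameter, because the trivialization $\tau_k^{(j)}$ was chosen independently of the level structure of $f_s$.

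For the rank~1 piece, a descending AL-path has a unique vertical segment which must be contained in exactly one of $p_k$ or $p_{k+1}$, so
\[
\bS_{1,0}(\wf;A,B)=\bigsqcup_{i\in\{k,k+1\}}\bigl[(\wcalA_{p_i}\cap A)\times (\wcalD_{p_i}\cap B)\bigr]_\geq
\]
for each $(A,B)$ appearing in (i)--(iv) (with $A=B=\wW_k^{(j)}$ in case (iv), after removing the diagonal). Each summand is smooth with corners by the same argument as in the single-cell case, the Morse--Smale transversality giving transverse intersections with the relevant level surfaces. The main obstacle is to verify that the two branches $i=k$ and $i=k+1$ are genuinely disjoint and do not merge across the level-exchange parameter; this is where we invoke the standing assumption that $\ve$ is small enough that $I_{2j-1}$ contains no $1/1$-intersections, which forces $\wcalA_{p_k}\cap\wcalD_{p_{k+1}}$ and $\wcalA_{p_{k+1}}\cap\wcalD_{p_k}$ to be empty inside $\wW_k^{(j)}$, so that an AL-sequence cannot transition between the two branches. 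The claimed boundary identities then follow by the same degeneracy bookkeeping as before: $\bT_{r,0}$ collects AL-sequences with an endpoint on $\widetilde{\kappa}^{-1}(a_j)\cup\widetilde{\kappa}^{-1}(b_j)$ and $\bS_{r,1}$ collects those whose horizontal segment breaks at a critical locus.
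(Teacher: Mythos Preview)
Your proof is correct and follows essentially the same approach as the paper: both reduce the double-cell case to the single-cell analysis of Lemmas~\ref{lem:M2WL}--\ref{lem:M2WW} by observing that the absence of $1/1$-intersections in $\kappa^{-1}(I_j)$ for odd $j$ keeps the contributions of the two critical loci $p_k$ and $p_{k+1}$ separated, so the corner structure near each can be studied independently. The paper states this more tersely and defers the remaining details to \cite[Lemma~5.8]{Wa1}.
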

\begin{proof}
Since the descending manifold loci and the ascending manifold loci between the two different critical loci in $\wW_k^{(j)}$ are disjoint, the set of AL-paths that are close to one critical locus is disjoint from that of AL-paths that are close to another critical locus. So the smooth structures on the corners can be studied separately. The rest of the proof is similar to those for Lemmas~\ref{lem:M2WL}, \ref{lem:M2LW}, \ref{lem:M2LL} and \ref{lem:M2WW}. For the detail, see \cite[Lemma~5.8]{Wa1}.
\end{proof}

\subsection{Piecewise smooth structure of the moduli space of long paths}\label{ss:mod_long}

Here, we shall prove Proposition~\ref{prop:M2(f)} for the cases when $|\rho|\geq 2$. For a descending route $\rho=(B(1),\ldots,B(r))$ in $\mathrm{cd}(\wf)$, let $\underline{\rho}=\{B(1),\ldots,B(r)\}$ denote the set of all cells in $\rho$. We define the equivalence relation $\sim$ on $\underline{\rho}$ generated by the following relations: $B(i)\sim B(i+1)$ if $B(i+1)$ is located on the bottom of $B(i)$. We call an equivalence class in ${\underline{\rho}}/{\sim}$ a {\it block}. Roughly, a block consists of vertically clustered sequence of cells that are successive in $\rho$. We write $\mathrm{blo}(\rho)=|{\underline{\rho}}/{\sim}|$, the number of different blocks in ${\underline{\rho}}/{\sim}$. For example, the descending route given in Figure~\ref{fig:ex_desc_route} has five blocks.

We fix some notations. Let $\rho=(B(1),B(2),\ldots,B(r))$ be a descending route. Let 
\[ \begin{split}
  \bacalM_2(\wf;\rho,\wL(B(r)))&=\{(x,\ldots,y)\in\bacalM_2(\wf;\rho);y\in \wL(B(r))\}\\
  \bacalM_2(\wf;\wGamma(B(1)),\rho)&=\{(x,\ldots,y)\in\bacalM_2(\wf;\rho);x\in \wGamma(B(1))\}\\
  \bacalM_2(\wf;\wGamma(B(1)),\rho_1,\wL(B(r)))&=\bacalM_2(\wf;\wGamma(B(1)),\rho)\cap\bacalM_2(\wf;\rho,\wL(B(r))).
\end{split} \]
Let 
\[ \begin{split}
  \bS_{r,m}(\wf;\rho,\wL(B(r)))&=\bS_{r,m}(\wf;\rho)\cap\bacalM_2(\wf;\rho,\wL(B(r))),\\
  \bS_{r,m}(\wf;\wGamma(B(1)),\rho)&=\bS_{r,m}(\wf;\rho)\cap\bacalM_2(\wf;\wGamma(B(1)),\rho_1),\\
  \bS_{r,m}(\wf;\wGamma(B(1)),\rho,\wL(B(r)))&=\bS_{r,m}(\wf;\rho)\cap\bacalM_2(\wf;\wGamma(B(1)),\rho,\wL(B(r))).
\end{split}\]

\subsubsection{Moduli space of AL-paths in a block}

Here, we consider the case $\mathrm{blo}(\rho)=1$. The case $|\rho|=1$ has been considered in \S\ref{ss:mod_cell}. The following lemma proves Proposition~\ref{prop:M2(f)} in the case $\mathrm{blo}(\rho)=1$ and $|\rho|\geq 2$.

\begin{Lem}\label{lem:M2block}
Suppose that $\mathrm{blo}(\rho)=1$ and $|\rho|\geq 2$. Then
\[ \bacalM_2(\wf;\rho)=\left\{\begin{array}{ll}
\bS_{1,0}(\wf;\rho) & \mbox{if $|\rho|\geq 3$}\\
\bS_{1,0}(\wf;\rho)\cup\bT_{0,0}(\wf;\rho) & \mbox{if $|\rho|=2$}
\end{array}\right. \]
where
\begin{enumerate}
\item $\bS_{1,0}(\wf;\rho)$ is a smooth compact manifold with corners, whose codimension $q$ stratum consists of AL-sequences $\gamma$ with $\deg\,\gamma=q$. In particular, $\partial\bS_{1,0}(\wf;\rho)=\bT_{1,0}(\wf;\rho)$.
\item If $|\rho|=2$, then $\bT_{0,0}(\wf;\rho)-\Delta_{\wW(B(1))\cap\wW(B(2))}$ is a smooth manifold with corners, whose codimension $q$ stratum consists of AL-sequences $\gamma$ with $\deg\,\gamma=q+1$. In particular, $\partial(\bT_{0,0}(\wf;\rho)-\Delta_{\wW(B(1))\cap\wW(B(2))})=\bT_{0,1}(\wf;\rho)-\Delta_{\wW(B(1))\cap\wW(B(2))}$.
\end{enumerate}
\end{Lem}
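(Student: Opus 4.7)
My plan is to leverage the explicit product form of $X_\rho$ for a single block and reduce the analysis to the single-cell Lemmas~\ref{lem:M2WL}--\ref{lem:M2longcell}.

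First, unwinding the recursive definition of $X(i)$ shows that when $\mathrm{blo}(\rho)=1$, every intermediate $X(i)$ equals $X(0)=\wW(B(1))$, so that $X_\rho=\wW(B(1))\times \wW(B(r))$ and an AL-sequence going along $\rho$ is determined by its endpoint pair. The block $\rho$ singles out a critical locus $p=p(\rho)$, determined via the descending-pair shapes of Figure~\ref{fig:BBd} and the triple exclusion of Figure~\ref{fig:BBB} by the position of the unique ``vertical transition'' prescribed by $\rho$, on which the single vertical segment of any rank-$1$ AL-path going along $\rho$ must lie.

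For the rank-$1$ piece, any such path is a horizontal--vertical--horizontal concatenation travelling along $p$, hence
\[
\bS_{1,0}(\wf;\rho)=\Bigl[(\wcalA_p\cap \wW(B(1)))\times (\wcalD_p\cap \wW(B(r)))\Bigr]_{\geq},
\]
in direct analogy with the formula in the proof of Lemma~\ref{lem:M2WL}. The parametrized Morse--Smale condition makes both factors compact manifolds with corners, and the additional inequality $\widetilde{\kappa}(x)\geq \widetilde{\kappa}(y)$ contributes one extra corner. The codimension-$q$ stratum then corresponds to AL-sequences with $q$ degeneracy conditions (endpoints on the $\wGamma\cup\wL$ faces, or on the horizontal faces $\widetilde{\kappa}^{-1}(a_j)\cup \widetilde{\kappa}^{-1}(b_j)$ of $\partial Y_\rho$), each of which sits in $\partial X_\rho$; hence $\partial \bS_{1,0}(\wf;\rho)=\bT_{1,0}(\wf;\rho)$.

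For the rank-$0$ piece, a single unbroken horizontal flow line of $-\widetilde{\xi}$ is confined to one $\widetilde{\kappa}$-slice, forcing $\widetilde{\kappa}(x)=\widetilde{\kappa}(y)$. When $|\rho|=2$, after the amalgamation between adjacent $I_j$'s this locus identifies with $\bT_{0,0}(\wf;\rho)\subset \partial X_\rho$; removing the diagonal and invoking \cite[Lemma~2.15]{Wa1} fiberwise presents $\bT_{0,0}(\wf;\rho)-\Delta_{\wW(B(1))\cap \wW(B(2))}$ as a smooth manifold with corners whose codimension-$q$ stratum carries $q+1$ degeneracies, the extra one coming from already being on $\partial X_\rho$. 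When $|\rho|\geq 3$, such a flow line would necessarily realize a triple $(B(i),B(i+1),B(i+2))$ of vertically consecutive cells in a single slice, which is in the excluded list of Figure~\ref{fig:BBB}; hence no rank-$0$ AL-path goes along the full block and $\bacalM_2(\wf;\rho)$ collapses to $\bS_{1,0}(\wf;\rho)$. The main obstacle I expect is verifying that the distinguished critical locus $p(\rho)$ is indeed canonical (or, when several critical loci fit the block, that the corresponding pieces of $\bS_{1,0}(\wf;\rho)$ are disjoint smooth strata) and checking that no unexpected limits from broken horizontal segments intrude on the closure in $X_\rho$.
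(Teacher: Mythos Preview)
Your approach is essentially the paper's: identify $X_\rho=\wW(B(1))\times\wW(B(r))$, write $\bS_{1,0}(\wf;\rho)$ explicitly as $(\wcalA_p\cap\wW(B(1)))\times(\wcalD_p\cap\wW(B(r)))$ for the critical locus $p$ threading the block, and observe that rank-$0$ paths only survive in the shared slice $\wW(B(1))\cap\wW(B(2))$ when $|\rho|=2$.

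Two corrections worth making. First, the $[\,\cdot\,]_{\geq}$ constraint is automatic here: since $\mathrm{blo}(\rho)=1$ and $|\rho|\geq 2$, the cells $B(1)$ and $B(r)$ lie over distinct intervals $I_j$, so every $x\in\wW(B(1))$ and $y\in\wW(B(r))$ already satisfy $\widetilde\kappa(x)\geq\widetilde\kappa(y)$. There is no ``extra corner'' from the inequality, and the paper accordingly writes the plain product. Second, your reason for excluding rank-$0$ paths when $|\rho|\geq 3$ is not right: Figure~\ref{fig:BBB} constrains the shape of admissible descending routes $\rho$, not the AL-paths that go along them, and a single vertical block of three cells is a perfectly valid route. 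The correct (and simpler) argument is that a rank-$0$ path is horizontal, hence confined to one $\widetilde\kappa$-fiber, while three vertically stacked cells lie over three consecutive intervals $I_j$ with empty common intersection; thus the path cannot meet all of $\wW(B(1)),\wW(B(2)),\wW(B(3))$.
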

\begin{proof}
Suppose that $\rho$ has $r$ cells: $\rho=(B(1),\ldots,B(r))$. Then we have $X_\rho=\wW(B(1))\times\wW(B(r))$ since $\mathrm{blo}(\rho)=1$.

If $|\rho|\geq 3$, there are no rank 0 AL-paths going along $\rho$. Thus there are only AL-paths going along $\rho$ of rank 1 and we have
\[ \bacalM_2(\wf;\rho)=\bS_{1,0}(\wf;\rho)=(\wcalA_p\cap\wW(B(1)))\times (\wcalD_{p}\cap\wW(B(r))), \]
where $p$ is the critical locus of $\wf$ that intersects all the cells in $\rho$. The assertion of the lemma follows immediately from this identity.

If $|\rho|=2$, there may be AL-paths going along $\rho$ of rank 0, which goes within $\wW(B(1))\cap \wW(B(2))$. These contribute to the term $\bT_{0,0}(\wf;\rho)$. The assertion about the boundary of $\bT_{0,0}(\wf;\rho)-\Delta_{\wW(B(1))\cap\wW(B(2))}$ is analogous to Lemma~\ref{lem:M2WW}.
\end{proof}

\begin{Cor}\label{cor:M2block_L}
Let $*$ be one of $(\rho,\wL(B(r)))$, $(\wGamma(B(1)),\rho)$ or $(\wGamma(B(1)),\rho,\wL(B(r)))$. If $\mathrm{blo}(\rho)=1$ and $|\rho|\geq 2$, then
\[ \bacalM_2(\wf;*)=\left\{\begin{array}{ll}
\bS_{1,0}(\wf;*) & \mbox{if $|\rho|\geq 3$}\\
\bS_{1,0}(\wf;*)\cup\bT_{0,0}(\wf;*) & \mbox{if $|\rho|=2$}
\end{array}\right. \]
and their boundaries are the intersections with $\partial\bS_{1,0}(\wf;\rho)=\bT_{0,1}(\wf;\rho)$.
\end{Cor}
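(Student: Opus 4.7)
The plan is to deduce the corollary directly from Lemma~\ref{lem:M2block} by a strata-transverse restriction argument, since each of the three moduli spaces in question is by definition the intersection of $\bacalM_2(\wf;\rho)$ with a product-form constraint subspace of $X_\rho = \wW(B(1))\times \wW(B(r))$: namely $\wW(B(1))\times \wL(B(r))$, $\wGamma(B(1))\times \wW(B(r))$, or $\wGamma(B(1))\times \wL(B(r))$. It therefore suffices to check that these constraints cut the pieces $\bS_{1,0}(\wf;\rho)$ and, when $|\rho|=2$, $\bT_{0,0}(\wf;\rho)$, transversally stratum by stratum, and then to read off the boundaries.

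For the rank $1$ piece, Lemma~\ref{lem:M2block} provides the explicit product description
\[ \bS_{1,0}(\wf;\rho) = \bigl(\wcalA_p\cap \wW(B(1))\bigr)\times \bigl(\wcalD_p\cap \wW(B(r))\bigr), \]
where $p$ is the common critical locus threaded by $\rho$. Since $\wL(B(r))$ and $\wGamma(B(1))$ are unions of regular level surfaces of the fiberwise function $\wf$, the descending locus $\wcalD_p$ is transverse to $\wL(B(r))$ and the ascending locus $\wcalA_p$ is transverse to $\wGamma(B(1))$; these transversalities hold on each stratum because the Morse--Smale condition is assumed parametrically and the level surfaces vary smoothly in the $\widetilde{\kappa}$-direction. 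Restriction therefore yields a smooth submanifold with corners of $\bS_{1,0}(\wf;\rho)$, and the codimension-$q$ stratum consists of AL-sequences with the appropriate degeneracy count, inherited from the stratification given in Lemma~\ref{lem:M2block}.

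For the case $|\rho|=2$, the rank $0$ piece $\bT_{0,0}(\wf;\rho)$ parametrizes gradient flow lines of $\wf_s$ whose image lies entirely in $\wW(B(1))\cap \wW(B(2))$, and by Lemma~\ref{lem:M2WW} its smooth structure comes from the fiberwise moduli space of \cite[Lem.~2.15]{Wa1}. Imposing $x\in \wGamma(B(1))$ or $y\in \wL(B(r))$ is again transverse, since regular level surfaces meet non-critical flow lines transversally, and the corresponding restriction of $\bT_{0,0}(\wf;\rho)$ has the desired smooth structure. The boundary assertion then follows formally: transverse intersection commutes with passage to boundary, so $\partial\bS_{1,0}(\wf;*)$ equals the intersection of the constraint with $\partial\bS_{1,0}(\wf;\rho)$, and similarly for $\bT_{0,0}(\wf;*)$ when $|\rho|=2$.

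The main obstacle is not analytic but bookkeeping: one must verify that the degeneracy count of Definition in \S\ref{ss:def_M2} translates correctly when the endpoints are prescribed, so that each additional endpoint constraint $x\in\wGamma(B(1))$ or $y\in\wL(B(r))$ shifts the codimension consistently with the statement, and that no new degenerations appear beyond those already listed. Once this translation is fixed, the argument is a routine transversal restriction of Lemma~\ref{lem:M2block}.
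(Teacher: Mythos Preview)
Your proposal is correct and matches the paper's implicit argument: the paper states this as a corollary of Lemma~\ref{lem:M2block} with no separate proof, so the intended deduction is exactly the strata-transverse restriction you outline. One small remark: the transversality of $\wcalD_p$ (resp.\ $\wcalA_p$) with the level surface $\wL(B(r))$ (resp.\ $\wGamma(B(1))$) does not actually require the parametrized Morse--Smale condition---it holds simply because these are regular level surfaces of $\wf$ and the descending/ascending loci are unions of gradient flow lines---so you can drop that justification and the argument goes through cleanly.
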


\subsubsection{Moduli space of AL-paths in several blocks}\label{ss:mod_several_blocks}

The following lemma proves the rest of Proposition~\ref{prop:M2(f)}, i.e., for the case $\mathrm{blo}(\rho)\geq 2$.

\begin{Lem}\label{lem:M2(Wk,Wj)}
If $\mathrm{blo}(\rho)\geq 2$, the moduli space $\bacalM_2(\wf;\rho)$ is the union 
\[ \bigcup_{r=0}^{\mathrm{blo}(\rho)} \bS_{r,0}(\wf;\rho), \]
where $\bS_{r,0}(\wf;\rho)$ is a smooth compact manifold with corners, whose codimension $q$ stratum consists of AL-sequences $\gamma$ with $\deg\,\gamma=q$. In particular,
\[ \begin{split}
\partial\bS_{r,0}(\wf;\rho)
&=\bS_{r,1}(\wf;\rho)\cup\bS_{r-1,1}(\wf;\rho)\cup \bT_{r,0}(\wf;\rho).
\end{split}\]
\end{Lem}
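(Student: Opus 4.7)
The plan is to reduce the multi-block case to the single-block results (Lemma~\ref{lem:M2block} and Corollary~\ref{cor:M2block_L}) via a fiber-product decomposition. Write $\rho$ as the concatenation of its block subroutes $\rho_1,\rho_2,\ldots,\rho_b$ with $b=\mathrm{blo}(\rho)$, so that consecutive blocks $\rho_j,\rho_{j+1}$ are joined by a right-side transition across a level surface $\wL_j$ which, following the construction of $X_\rho$ in \S\ref{ss:M2(f)}, is recorded as an extra factor in $X_\rho$. An AL-path along $\rho$ decomposes canonically into AL-subpaths, one per block, matching at the intermediate transition points on the $\wL_j$. This gives an identification
\[
\bacalM_2(\wf;\rho)\;\cong\;\bacalM_2(\wf;\rho_1,\wL_1)\times_{\wL_1}\bacalM_2(\wf;\wGamma_2,\rho_2,\wL_2)\times_{\wL_2}\cdots\times_{\wL_{b-1}}\bacalM_2(\wf;\wGamma_b,\rho_b),
\]
where $\wGamma_j$ denotes the top level surface of $\rho_j$; each factor is one of the cases described by Corollary~\ref{cor:M2block_L}.

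Each single-block factor is a union of its rank-$0$ and rank-$1$ pieces (the rank-$0$ piece being present only when the block has at most two cells). Consequently, the total rank $r$ of an AL-path along $\rho$ equals the number of blocks contributing a vertical segment, so $0\leq r\leq b$, and $\bS_{r,0}(\wf;\rho)$ is a disjoint union over subsets $J\subseteq\{1,\ldots,b\}$ of size $r$ (indexing the blocks with a vertical segment) of the corresponding fiber products. The parametrized Morse--Smale condition on $\xi$ ensures that the descending and ascending manifolds of the critical loci are transverse to the $\wL_j$, which makes the evaluation maps of each factor to $\wL_j$ submersive on every open stratum. Consequently every fiber product above is transverse and inherits a smooth manifold-with-corners structure; compactness is inherited from compactness of each factor.

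The boundary formula follows from the boundary formulas for the factors (Corollary~\ref{cor:M2block_L}) combined with the product rule for boundaries of transverse fiber products. A codimension-$1$ degeneration arises in exactly three ways: (a) a horizontal segment inside a block breaks at a critical point, contributing $\bS_{r,1}(\wf;\rho)$; (b) the endpoint of the AL-path lies in a horizontal face $\widetilde{\kappa}^{-1}(a_j)$ of $X_\rho$, contributing $\bT_{r,0}(\wf;\rho)$; (c) a vertical segment in one block collapses onto its critical locus. In case (c) the two horizontal segments bracketing the collapsed vertical segment merge into a single broken horizontal segment whose break point is the critical point, and the edge-relation gluing of \S\ref{ss:M2(f)} identifies this face with a face of the adjacent stratum in which that block is now rank $0$ but the ambient path carries one extra break; this contributes $\bS_{r-1,1}(\wf;\rho)$. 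Counting independent degeneracies across factors then gives $\deg\gamma=q$ in codimension $q$.

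The main technical obstacle is the identification in case (c): one must verify that the boundary chart of the rank-$1$ single-block space at a collapsed vertical segment glues smoothly with the boundary chart of the rank-$0$ single-block space enriched by a broken horizontal segment, so that the resulting face is a genuine codimension-$1$ stratum of $\bS_{r-1,1}(\wf;\rho)$ rather than merely an adjacent limit. Transversality of $\wcalD_p$ and $\wcalA_p$ with the level surfaces $\wL_j$, guaranteed by the parametrized Morse--Smale condition, provides exactly the compatibility needed for both this gluing and for the fiber-product transversality used throughout; once these local checks are carried out on a block-by-block basis, the lemma follows.
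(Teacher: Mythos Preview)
Your approach---writing $\bacalM_2(\wf;\rho)$ as an iterated fiber product over the inter-block level surfaces and invoking the single-block results of Lemma~\ref{lem:M2block} and Corollary~\ref{cor:M2block_L}---is exactly the idea in the paper. The paper organizes it inductively: it first establishes the endpoint-constrained version (Lemma~\ref{lem:M2(Wk,Lj)}) by induction on $\mathrm{blo}(\rho)$ via Lemmas~\ref{lem:M2(WLLL)} and~\ref{lem:M2(WLL)}, and then peels off one more factor to obtain Lemma~\ref{lem:M2(Wk,Wj)}. Your ``all at once'' decomposition is equivalent.

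There is, however, an error in your transversality argument. You claim that the parametrized Morse--Smale condition makes the evaluation maps to the level surfaces $\wL_j$ \emph{submersive} on every open stratum. This is false for the rank-$1$ factors: for instance $\bS_{1,0}(\wf;\wL_{j-1},\rho_j,\wL_j)$ is (up to the $\kappa$-inequality) $(\wcalA_p\cap\wL_{j-1})\times(\wcalD_p\cap\wL_j)$, and the evaluation to $\wL_j$ is the second projection, whose image is the proper submanifold $\wcalD_p\cap\wL_j$---certainly not a submersion. What you actually need is \emph{strata transversality} (Definition~\ref{def:mfd_corners}) of the two evaluation maps landing in a common $\wL_j$, and that is exactly what the parametrized Morse--Smale condition gives: the image of one evaluation is governed by a descending manifold $\wcalD_p$ and the image of the other by an ascending manifold $\wcalA_q$, and these are mutually transverse in $\wM$, hence in $\wL_j$. (When one of the adjacent factors is rank~$0$ its evaluation \emph{is} a diffeomorphism, and transversality is automatic.) This is what the paper invokes in Lemma~\ref{lem:M2(WLLL)}, together with Proposition~\ref{prop:BT}. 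Once this correction is made, your boundary analysis in cases (a)--(c) is fine and matches the paper's.
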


To prove Lemma~\ref{lem:M2(Wk,Wj)}, we prove the following lemma by induction on $\mathrm{blo}(\rho)$.

\begin{Lem}\label{lem:M2(Wk,Lj)}
If $\mathrm{blo}(\rho)\geq 2$, the moduli space $\bacalM_2(\wf;\rho,\wL(B(r)))$ is the union 
\[ \bigcup_{r=0}^{\mathrm{blo}(\rho)} \bS_{r,0}(\wf;\rho,\wL(B(r))), \]
where $\bS_{r,0}(\wf;\rho,\wL(B(r)))$ is a smooth compact manifold with corners, whose codimension $q$ stratum consists of AL-sequences $\gamma$ with $\deg\,\gamma=q+1$. In particular,
\[ \begin{split}
\partial\bS_{r,0}(\wf;\rho,\wL(B(r)))
&=\bS_{r,1}(\wf;\rho,\wL(B(r)))\cup\bS_{r-1,1}(\wf;\rho,\wL(B(r)))\\
&\quad\cup \bT_{r,0}(\wf;\rho,\wL(B(r))).
\end{split}\]
\end{Lem}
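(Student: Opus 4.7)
The proof goes by induction on the number of blocks $B = \mathrm{blo}(\rho)\ge 2$, handled in parallel with the analogous statements for $\bacalM_2(\wf;\wGamma(B(1)),\rho)$ and $\bacalM_2(\wf;\wGamma(B(1)),\rho,\wL(B(r)))$; I will only write the version stated. The inductive step peels off the first block: write $\rho=\rho_0\cdot\rho_1$ where $\rho_0=(B(1),\ldots,B(k))$ is the first block (so $\mathrm{blo}(\rho_0)=1$) and $\rho_1=(B(k+1),\ldots,B(r))$ carries the remaining $B-1$ blocks. By the definition of a block, $B(k+1)$ is located on the right side of $B(k)$, so at the interface the AL-path must cross a common time-boundary slice $T\subset\wM$ (a single fiber of $\widetilde{\kappa}$) via a horizontal flow-line segment in $T$.

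The key observation is that an AL-path going along $\rho$ and ending in $\wL(B(r))$ decomposes uniquely as the concatenation of an AL-path along $\rho_0$ ending on the trace of $\wL(B(k))$ in $T$ with an AL-path along $\rho_1$ starting on the trace of $\wGamma(B(k+1))$ in $T$, the two pieces being joined through one horizontal flow line in $T$. This provides the identification
\[
\bacalM_2(\wf;\rho,\wL(B(r)))\cong \bacalM_2(\wf;\rho_0,\wL(B(k)))\times_T \bacalM_2(\wf;\wGamma(B(k+1)),\rho_1,\wL(B(r))).
\]
The first factor is a smooth compact manifold with corners by Corollary~\ref{cor:M2block_L}, and the second by the inductive hypothesis (its sibling for $(\wGamma,\rho,\wL)$). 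The two evaluation maps into $T$ meet transversally thanks to the parametrized Morse--Smale condition on $\xi$, so the fiber product inherits a manifold-with-corners structure (see Appendix~\ref{s:mfd_corners}) in which the codimensions of strata add. Since the rank of the concatenated AL-path is the sum of the ranks of the two pieces, and each block contributes rank $0$ or $1$ (forcibly $1$ when the block has more than one cell), the total rank ranges over $0,1,\ldots,B$, matching the statement.

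To read off the boundary of $\bS_{r,0}(\wf;\rho,\wL(B(r)))$ I would enumerate the codimension-one degenerations of a generic AL-path in the stratum: (i) a horizontal segment in one of the pieces acquires a break at an interior critical locus, producing $\bS_{r,1}(\wf;\rho,\wL(B(r)))$; (ii) a horizontal flow in the interface slice $T$ collapses onto a critical locus there, which transfers a rank unit from one piece to the other and produces $\bS_{r-1,1}(\wf;\rho,\wL(B(r)))$; and (iii) the AL-path touches a horizontal face $\widetilde{\kappa}^{-1}(a_j)$ of $\partial Y_\rho$, producing $\bT_{r,0}(\wf;\rho,\wL(B(r)))$. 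Compactness of each $\bS_{r,0}(\wf;\rho,\wL(B(r)))$ follows from the standard broken-trajectory compactification in fiberwise Morse theory applied block by block, together with compactness of the interface $T$.

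The main obstacle will be the bookkeeping around the interface: I must check that the codimension shift by $+1$ forced by the endpoint condition $y\in\wL(B(r))$ composes consistently with the shift built into Corollary~\ref{cor:M2block_L} at the $\wL$-end and into the sibling inductive hypothesis at the $\wGamma$-end, and that every one-parameter degeneration near the block boundary is correctly assigned to $\bS_{r,1}$ or $\bS_{r-1,1}$. Verifying the transversality of the two evaluation maps into $T$ is the key geometric content, and it is precisely here that the genericity of $\xi$ and the careful choice of the parameter $\eta$ placing $\wL$ and $\wGamma$ at regular levels are used.
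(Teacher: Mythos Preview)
Your overall scheme (induction on the number of blocks, with a fiber-product decomposition peeling off the first block, transversality coming from the parametrized Morse--Smale condition) is exactly the paper's approach: the base case $\mathrm{blo}(\rho)=2$ is Lemmas~\ref{lem:M2(WLLL)}--\ref{lem:M2(WLL)}, and the inductive step is the analogue of \cite[Lemma~2.23]{Wa1}. However, you have misread the geometry of the interface between blocks, and this error propagates.

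When $B(k+1)$ lies ``on the right side'' of $B(k)$, the two cells are in the \emph{same} parameter interval $I_j$ and differ in Morse level: the common face is the level-surface locus $\wL(B(k))=\wGamma(B(k+1))=\bigcup_{s\in I_j} f_s^{-1}(\gamma_i(s))$, not a single fiber of $\widetilde\kappa$. (The $\widetilde\kappa$-fiber boundaries are the ``bottom'' faces, which occur \emph{within} a block and account for the $\bT_{r,0}$ term.) Accordingly, the correct fiber product is
\[
\bacalM_2(\wf;\rho_0,\wL(B(k)))\;\times_{\wL(B(k))}\;\bacalM_2\bigl(\wf;\wGamma(B(k+1)),\rho_1,\wL(B(r))\bigr),
\]
taken over the $2$-manifold $\wL(B(k))$ via the endpoint evaluations $i_2$ and $i_1$ of Lemma~\ref{lem:M2(WLLL)}. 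There is no extra ``joining horizontal segment'': the two pieces simply share a point of $\wL(B(k))$, which is an interior point of a horizontal segment of the full AL-path. If you tried to take the fiber product over a single $\widetilde\kappa$-fiber $T$, the evaluation maps would not even land there in general (the interface level surface sweeps through all of $I_j$), and transversality would have no content.

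Your boundary item (ii) is also off as a consequence. The face $\bS_{r-1,1}$ of $\partial\bS_{r,0}$ does not arise from a horizontal segment at the interface collapsing; it arises when, in one block with $i_\ell=1$, the factor $\bS_{1,0}(\wf;\wL_{\ell-1},\rho_\ell,\wL_\ell)$ degenerates to its $\bS_{0,1}$ face---geometrically, the vertical segment along the critical locus in that block shrinks to a point, so the two adjacent horizontal segments merge into a single horizontal segment broken once at the critical locus. This drops the rank by one and creates one break, hence $\bS_{r-1,1}$. Once you fix the interface to be $\wL(B(k))$, the remaining bookkeeping you outline (additivity of codimension in the fiber product, the $+1$ shift from the endpoint constraint in $\wL(B(r))$) goes through as in the paper.
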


Let us consider the case $\mathrm{blo}(\rho)=2$. The following lemma holds.

\begin{Lem}\label{lem:M2(WLLL)}
Suppose that a descending route $\rho=(B(1),\ldots,B(r))$ is such that
\[\begin{split}
\underline{\rho}/{\sim}\,&=\{\underline{\rho_1},\underline{\rho_2}\},\\
\rho_1&=(B(1),\ldots,B(k)),\quad \rho_2=(B(k+1),\ldots,B(r)).
\end{split}\]
Then the projections $i_2:\bacalM_2(\wf;\rho_1,\wL(B(k)))\to \wL(B(k))$ and \\$i_1:\bacalM_2(\wf;\wL(B(k)),\rho_2,\wL(B(r)))\to \wL(B(k))$ are strata transversal (Definition~\ref{def:mfd_corners}). Hence the fiber product
\[\begin{split}
&\bacalM_2(\wf;\rho_1,\wL(B(k)))\times_{\wL(B(k))}\bacalM_2(\wf;\wL(B(k)),\rho_2,\wL(B(r)))\\
&\subset \wW(B(1))\times\wL(B(k))\times\wL(B(k))\times\wL(B(r))
\end{split} \]
is the union of smooth manifolds with corners. The codimension $q$ stratum of the fiber product is
\[ \bigcup_{{q_1+q_2=q}\atop{q_1,q_2\geq 0}}\partial_{q_1}\bacalM_2(\wf;\rho_1,\wL(B(k)))
\times_{\wL(B(k))} \partial_{q_2}\bacalM_2(\wf;\wL(B(k)),\rho_2,\wL(B(r))), \]
where we denote by $\partial_iS$ the codimension $i$ stratum of a stratified space $S$. Hence the codimension $q$ stratum consists of AL-paths that are compositions of $\gamma_1\in\bacalM_2(\wf;\rho_1,\wL(B(k)))$ and $\gamma_2\in\bacalM_2(\wf;\wL(B(k)),\rho_2,\wL(B(r)))$ with $\deg\,\gamma_1+\deg\,\gamma_2=q+3$.
\end{Lem}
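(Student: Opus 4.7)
The plan is as follows. By Corollary~\ref{cor:M2block_L} applied separately to the blocks $\rho_1$ and $\rho_2$, each factor $\bacalM_2(\wf;\rho_1,\wL(B(k)))$ and $\bacalM_2(\wf;\wL(B(k)),\rho_2,\wL(B(r)))$ is already a union of smooth manifolds with corners, on each stratum of which the projections $i_2$ and $i_1$ into $\wL(B(k))$ are smooth. The argument therefore reduces to verifying strata transversality of the pair $(i_1,i_2)$ into the $2$-dimensional target $\wL(B(k))$, after which the general fiber-product construction for manifolds with corners recalled in Appendix~\ref{s:mfd_corners} delivers the asserted smooth structure on the fiber product, together with the displayed stratification by codimension.

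For the strata transversality, I would stratify by rank. On a rank-$0$ stratum (flow-lines inside a single block), the endpoint, respectively starting-point, map into $\wL(B(k))$ is a submersion onto an open subset, since it factors as the time-$T$ projection along the nonsingular fiberwise gradient flow in each fiber. On a rank-$1$ stratum of $\rho_1$ passing through a critical locus $p$, the image of $i_2$ is contained in the $1$-dimensional submanifold $\wcalD_p\cap \wL(B(k))$; symmetrically, a rank-$1$ stratum of $\rho_2$ passing through a critical locus $q$ maps under $i_1$ into $\wcalA_q\cap \wL(B(k))$. In each of the four pairings of such images (open/open, open/$1$-submanifold, $1$-submanifold/open, $1$-submanifold/$1$-submanifold), transversality inside the $2$-manifold $\wL(B(k))$ reduces to the parametrized Morse--Smale condition assumed in \S\ref{ss:def-al-path}, which forces $\wcalD_p$ and $\wcalA_q$ to meet transversely in $\wM$ and hence their traces on $\wL(B(k))$ to meet transversely in $\wL(B(k))$.

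Granted this transversality, the fiber product is a smooth manifold with corners whose codimension-$q$ stratum is the disjoint union
\[
\bigcup_{q_1+q_2=q}\partial_{q_1}\bacalM_2(\wf;\rho_1,\wL(B(k)))\times_{\wL(B(k))}\partial_{q_2}\bacalM_2(\wf;\wL(B(k)),\rho_2,\wL(B(r))).
\]
The degeneracy count then runs as follows: by Corollary~\ref{cor:M2block_L} combined with the analogues of Lemmas~\ref{lem:M2WL} and \ref{lem:M2LL}, the codimension-$q_1$ stratum of the first factor has degeneracy $q_1+1$ (one endpoint pinned on $\wL(B(k))$), while the codimension-$q_2$ stratum of the second factor has degeneracy $q_2+2$ (both endpoints pinned on level surfaces). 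Summing, a pair $(\gamma_1,\gamma_2)$ in the codimension-$q$ stratum satisfies $\deg\,\gamma_1+\deg\,\gamma_2=q+3$, as asserted.

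The main obstacle will be the careful bookkeeping of transversality at the deeper corner strata, where additional coincidences (a horizontal segment broken at a critical locus, an endpoint landing on $\wGamma$, or a level-exchange bifurcation encountered along the way) could in principle force the images of $i_1$ and $i_2$ into a common submanifold of lower-than-expected dimension. These potential failures are ruled out by iterating the parametrized Morse--Smale condition together with the genericity arrangements fixed at the beginning of \S\ref{ss:M2(f)}, which separate level-exchange bifurcations from $1/1$-intersections in the $S^1$-parameter and keep the relevant critical loci in general position.
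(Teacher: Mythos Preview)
Your proposal is correct and follows essentially the same route as the paper. The paper's own argument is only a one-line pointer to an analogous lemma in \cite{Wa1}, invoking precisely the three ingredients you use: Corollary~\ref{cor:M2block_L} for the corner structure of each factor, the parametrized Morse--Smale condition for transversality of $i_1$ and $i_2$, and Proposition~\ref{prop:BT} for the resulting fiber-product stratification. Your expanded rank-by-rank analysis and the degeneracy bookkeeping $(q_1+1)+(q_2+2)=q+3$ are the details one would supply when unpacking that reference.
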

Lemma~\ref{lem:M2(WLLL)} can be proved by an argument analogous to \cite[Lemma~2.21]{Wa1} and by using the parametrized Morse--Smale condition for $\widetilde{\xi}$, Corollary~\ref{cor:M2block_L} and Proposition~\ref{prop:BT}. 

\begin{Lem}\label{lem:M2(WLL)}
Let $\rho$ be as in Lemma~\ref{lem:M2(WLLL)}. The space $\bacalM_2(\wf;\rho,\wL(B(r)))$ agrees with
\[ \mathrm{pr}\Bigl[\bacalM_2(\wf;\rho_1,\wL(B(k)))\times_{\wL(B(k))}\bacalM_2(\wf;\wL(B(k)),\rho_2,\wL(B(r)))\Bigr], \]
where $\mathrm{pr}:\bacalM_2(\wf;\rho_1,\wL(B(k)))\times_{\wL(B(k))}\bacalM_2(\wf;\wL(B(k)),\rho_2,\wL(B(r)))\to \wW(B(1))\times\wL(B(k))\times\wL(B(r))$ is the projection $(x,z_k,z_k,y)\mapsto (x,z_k,y)$, which is an embedding. Hence the codimension $q$ stratum of $\bacalM_2(\wf;\rho,\wL(B(r)))$ consists of AL-paths that are compositions of $\gamma_1\in\bacalM_2(\wf;\rho_1,\wL(B(k)))$ and $\gamma_2\in\bacalM_2(\wf;\wL(B(k)),\rho_2,\wL(B(r)))$ with degeneracy $q+1$.
\end{Lem}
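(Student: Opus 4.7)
The plan is to derive Lemma~\ref{lem:M2(WLL)} directly from Lemma~\ref{lem:M2(WLLL)} by identifying $\bacalM_2(\wf;\rho,\wL(B(r)))$ with the image of the fiber product under the explicit projection $\mathrm{pr}$, in three steps.

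First, I would establish a natural bijection between the fiber product and $\bacalM_2(\wf;\rho,\wL(B(r)))$ given by concatenation of AL-paths. Since $\rho$ has exactly two blocks separated by a single right-transition between $B(k)$ and $B(k+1)$, every AL-path $\gamma$ going along $\rho$ meets $\wL(B(k))$ at a unique point $z_k$—namely the terminal point of the horizontal segment exiting the block $\rho_1$. Splitting $\gamma$ at $z_k$ yields $\gamma_1\in\bacalM_2(\wf;\rho_1,\wL(B(k)))$ and $\gamma_2\in\bacalM_2(\wf;\wL(B(k)),\rho_2,\wL(B(r)))$, and concatenation is the inverse. On AL-sequences, the block structure forces $X_\rho=\wW(B(1))\times\wL(B(k))\times\wL(B(r))$ (the factor $\wL(B(k))$ appears exactly once, at the unique right-transition); hence $\gamma$ corresponds to $(x,z_k,y)$ while the pair $(\gamma_1,\gamma_2)$ corresponds to $(x,z_k,z_k,y)$, and under the ambient inclusions the bijection is realized by $\mathrm{pr}$.

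Second, I would verify that $\mathrm{pr}$ is an embedding on the fiber product. Since the fiber product lies inside the diagonal locus $\{(x,z_k,z_k,y)\}$, the map $(x,z_k,z_k,y)\mapsto(x,z_k,y)$ is injective, and on each stratum its differential is injective because the two $\wL(B(k))$-coordinates are forced to coincide. Thus $\mathrm{pr}$ is a smooth embedding on every stratum and transfers the stratification from the fiber product. The codimension count then follows from Lemma~\ref{lem:M2(WLLL)}: a codimension $q$ stratum there consists of $(\gamma_1,\gamma_2)$ with $\deg\,\gamma_1+\deg\,\gamma_2=q+3$, and when we form $\gamma=\gamma_1\cdot\gamma_2$, condition~(4) for $\gamma_1$ at its terminal point $z_k\in\wL(B(k))$ and condition~(3) for $\gamma_2$ at its initial point $z_k$ both become void because $z_k$ is now an interior point of $\gamma$. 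This subtracts $2$ from the total, giving $\deg\,\gamma=q+1$.

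The principal obstacle is the first step, specifically confirming that $z_k$ is uniquely determined by $\gamma$ and that the ambient product $X_\rho$ has precisely the asserted form $\wW(B(1))\times\wL(B(k))\times\wL(B(r))$. This requires unpacking the inductive definition of $X(i)$ against the two-block decomposition of $\rho$ and checking that no auxiliary factors $\wL(B(i))$ are introduced within either block; once this bookkeeping is in place, the embedding property and the stratification are essentially formal consequences.
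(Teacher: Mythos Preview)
Your proposal is correct and follows the natural line the paper intends: the paper's own proof is the single sentence that the lemma ``can be proved by an argument analogous to \cite[Lemma~2.22]{Wa1},'' and your three steps (splitting at the unique crossing $z_k\in\wL(B(k))$, the evident diffeomorphism $(x,z_k,z_k,y)\mapsto(x,z_k,y)$ onto the diagonal, and the $-2$ in the degeneracy count from dropping the endpoint conditions (3) and (4) at the junction) are precisely that argument spelled out. One cosmetic correction: $X_\rho$ itself is $\wW(B(1))\times\wL(B(k))\times\wW(B(r))$; the product $\wW(B(1))\times\wL(B(k))\times\wL(B(r))$ you wrote is the ambient space for the restricted moduli space $\bacalM_2(\wf;\rho,\wL(B(r)))$, which is what you need anyway.
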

Lemma~\ref{lem:M2(WLL)} can be proved by an argument analogous to \cite[Lemma~2.22]{Wa1}.

\begin{Lem}
Suppose that Lemma~\ref{lem:M2(Wk,Lj)} holds true for $\mathrm{blo}(\rho)=p$. Then Lemma~\ref{lem:M2(Wk,Lj)} holds true for $\mathrm{blo}(\rho)=p+1$.
\end{Lem}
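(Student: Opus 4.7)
The plan is to reduce to the inductive hypothesis by slicing $\rho$ at the interface between its first block and the remaining $p$ blocks, then reassembling via a fiber product over the intermediate level surface---exactly the scheme used for the base case $\mathrm{blo}(\rho)=2$ in Lemmas~\ref{lem:M2(WLLL)} and \ref{lem:M2(WLL)}. Write $\rho = \rho_1 \cdot \rho_2$ with $\rho_1 = (B(1),\ldots,B(k))$ the first block (so $\mathrm{blo}(\rho_1)=1$) and $\rho_2 = (B(k+1),\ldots,B(r))$ the remaining $p$ blocks. Every AL-sequence going along $\rho$ must cross the level surface $\wL(B(k))$ at the transition between the two parts, so we expect the identification
\[
\bacalM_2(\wf;\rho,\wL(B(r))) \;\cong\; \mathrm{pr}\Bigl[\bacalM_2(\wf;\rho_1,\wL(B(k))) \times_{\wL(B(k))} \bacalM_2(\wf;\wL(B(k)),\rho_2,\wL(B(r)))\Bigr].
\]

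To carry this out, I would first describe each factor: $\bacalM_2(\wf;\rho_1,\wL(B(k)))$ is handled by Corollary~\ref{cor:M2block_L} (since $\rho_1$ has only one block), while $\bacalM_2(\wf;\wL(B(k)),\rho_2,\wL(B(r)))$ is obtained from the inductive hypothesis applied to $\bacalM_2(\wf;\rho_2,\wL(B(r)))$ by imposing that the starting point lie on $\wL(B(k))$, a codimension-one restriction that is strata-transversal by the parametrized Morse--Smale condition on $\widetilde{\xi}$. Second, verify strata transversality of the two evaluation maps to $\wL(B(k))$ by an argument parallel to Lemma~\ref{lem:M2(WLLL)} and invoke Proposition~\ref{prop:BT} to conclude that the fiber product is a union of smooth compact manifolds with corners whose codimension-$q$ stratum is $\bigcup_{q_1+q_2=q}$ of products of codimension-$q_i$ strata of the two factors. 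Third, check that the projection $(x,z,z,y)\mapsto (x,z,y)$ is an embedding onto $\bacalM_2(\wf;\rho,\wL(B(r)))$, exactly as in Lemma~\ref{lem:M2(WLL)}.

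It then remains to match the stratification to the formula in the statement. Ranks add under concatenation, so the rank $r$ of a composite $\gamma_1 \cdot \gamma_2$ lies in $\{0,1,\ldots,p+1\} = \{0,1,\ldots,\mathrm{blo}(\rho)\}$, as required. The three boundary pieces $\bS_{r,1}$, $\bS_{r-1,1}$, and $\bT_{r,0}$ then arise respectively from breaks occurring inside one of the two factors (keeping the total rank fixed), from the junction at $\wL(B(k))$ falling on a critical locus of $\widetilde{\xi}$ (which merges two vertical segments into one, lowering the combined rank by one and introducing a break), and from the horizontal faces of $\partial Y_\rho$ inherited from either factor. The main obstacle I expect is the careful bookkeeping of degeneracies across the gluing: each factor separately counts the endpoint on $\wL(B(k))$ as a degeneracy under the convention of \S\ref{ss:def_M2}, yet after gluing these two contributions must collapse to a single junction degeneracy, so one must verify that $\deg\gamma_1 + \deg\gamma_2 = \deg(\gamma_1\cdot\gamma_2) + 1$ generically. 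This offset is precisely what turns the codimension formula ``$q$ stratum has degeneracy $q+1$'' of the two factors into the formula ``$q$ stratum has degeneracy $q+1$'' for the glued moduli space, and a miscount at this step would break the induction.
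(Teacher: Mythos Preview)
Your overall strategy is correct and is exactly what the paper intends: the paper's own proof is a one-line reference to \cite[Lemma~2.23]{Wa1}, and the fiber-product-plus-induction scheme you outline is precisely that argument, mirroring the base case handled in Lemmas~\ref{lem:M2(WLLL)} and~\ref{lem:M2(WLL)}.

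Two points of bookkeeping need correction, however. First, your attribution of the $\bS_{r-1,1}$ boundary piece to ``the junction at $\wL(B(k))$ falling on a critical locus'' cannot be right: by construction $\wL(B(k))$ is a regular level surface (it sits at height $f_s(p_i(s))-\eta$), so no point on it is critical. The stratum $\bS_{r-1,1}$ instead arises when a vertical segment anywhere along the path shrinks to length zero; the two adjacent horizontal segments then merge into a single horizontal segment broken at the critical locus, which drops the rank by one and creates one break. This is visible already in the single-cell case (Lemma~\ref{lem:M2WL}), where $\bS_{0,1}$ is the common face of $\bS_{0,0}$ and $\bS_{1,0}$.

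Second, the degeneracy offset is $2$, not $1$. Compare Lemmas~\ref{lem:M2(WLLL)} and~\ref{lem:M2(WLL)}: the codimension-$q$ stratum of the fiber product consists of pairs with $\deg\gamma_1+\deg\gamma_2=q+3$, while after projection the codimension-$q$ stratum of $\bacalM_2(\wf;\rho,\wL(B(r)))$ has $\deg\gamma=q+1$. The discrepancy of $2$ is because \emph{both} the terminal condition on $\gamma_1$ (endpoint on $\wL(B(k))$) and the initial condition on $\gamma_2$ (startpoint on $\wGamma(B(k+1))$) are counted as degeneracies of the factors but disappear for the concatenated path, whose first and last cells are $B(1)$ and $B(r)$. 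With this corrected offset the induction closes.
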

\begin{proof}
The proof is analogous to \cite[Lemma~2.23]{Wa1}.
\end{proof}

\begin{proof}[Proof of Lemma~\ref{lem:M2(Wk,Wj)}]
$\bacalM_2(\wf;\rho)$ is the image of the projection from the fiber product
\[ \bacalM_2(\wf;\rho_1,\wL(B(r-1)))\times_{\wL(B(r))}\bacalM_2(\wf;\wL(B(r-1)),\wW(B(r))), \]
where $\rho_1=(B(1),B(2),\ldots,B(r-1))$. It follows from Lemmas~\ref{lem:M2(Wk,Lj)}, \ref{lem:M2LW} and an argument analogous to \cite[Lemma~2.22]{Wa1} that $\bacalM_2(\wf;\rho)$ is the union of smooth compact manifolds with corners. The boundary strata of $\bacalM_2(\wf;\rho)$ given by the iterated fiber product having a term of the form $\bT_{1,0}$ or $\bT_{0,0}$ contributes to $\bT_{r,0}(\wf;\rho)$. The other part contributes to $\bS_{r,1}(\wf;\rho)\cup \bS_{r-1,1}(\wf;\rho)$. 
\end{proof}

\begin{proof}[Proof of Proposition~\ref{prop:M2(f)}]
Proposition~\ref{prop:M2(f)} is a consequence of Lemmas~\ref{lem:M2WW}, \ref{lem:M2longcell}, \ref{lem:M2block} and \ref{lem:M2(Wk,Wj)}. That the codimension 0 strata are disjoint from each other is obvious from the definition.
\end{proof}

\mysection{Convention for (co)orientation of $\bacalM_2(\wf;\rho)$}{s:coori}

Now we define the coorientation of $\bacalM_2(\wf;\rho)$. Suppose that $\rho$ has $m$ blocks and write $\underline{\rho}/{\sim}=\{\underline{\rho_1},\underline{\rho_2},\ldots,\underline{\rho_m}\}$. Then $X_\rho$ is of the form
\[ X_\rho=\wW(B(1))\times \wL_1\times \wL_2\times\cdots\times\wL_{m-1}\times \wW(B(r)), \]
where $\wL_i$ is $\wL(B(k))$ for some $k$. Then it follows from the argument in \S\ref{ss:mod_several_blocks} that the moduli space $\bacalM_2(\wf;\rho)\subset X_\rho$ is given by the union
\begin{equation}\label{eq:union_fiber_products}
 \bigcup_{{i_1,\ldots,i_m}\atop{\in\{0,1\}}}\mathrm{pr}\Bigl[
  \bS_{i_1,0}(\wf;\rho_1,\wL_1)\times_{\wL_1}\bS_{i_2,0}(\wf;\wL_1,\rho_2,\wL_2)\times_{\wL_2}\cdots\times_{\wL_{m-1}}\bS_{i_m,0}(\wf;\wL_{m-1},\rho_m)
  \Bigr],
\end{equation}
where the different pieces are glued together along strata of codimension $\geq 1$. We shall fix the coorientations of the terms in (\ref{eq:union_fiber_products}). Although the ambient manifolds $X_\rho$ change for different $\rho$, it does not matter to compare the orientations of the strata.

\subsection{Coorientations of descending and ascending manifolds}\label{ss:ori_DA}

Let $p$ be a critical locus of $\widetilde{\xi}$, $\wcalD_p$ be the descending manifold locus of $p$ and $\wcalA_p$ be the ascending manifold locus of $p$. Let $x$ be a point of $p$ and let $\Sigma_x$ be the fiber of $\widetilde{\kappa}$ including $x$. By parametrized Morse lemma \cite[\S{A1}]{Ig1}, there is a local coordinate $(x_1,x_2,x_3)$ around $x$ such that $p$ agrees locally with the $x_1$-axis, $x_1$ increases with respect to the height function $\widetilde{\kappa}$, the $x_2x_3$-plane agrees locally with the level surface $\Sigma_x$ of $\widetilde{\kappa}$, and $\wf$ is of the form $c(x_1)\pm x_2^2\pm x_3^2$ for a smooth function $c(x_1)$. 
\begin{enumerate}
\item We define the orientation of $\Sigma_x$ at $x$ by
\[ o(\Sigma_x)_x=\iota(-\mathrm{grad}_x\widetilde{\kappa})\,dx_1\wedge dx_2\wedge dx_3=-dx_2\wedge dx_3. \]
\item Let $\calD_x=\wcalD_p\cap \Sigma_x$. For an arbitrarily given orientation $o(\calD_x)_x$ of $\calD_x$, we define the orientations of $\wcalD_p$ and $\wcalA_p$ by the rule
\[ o(\wcalD_p)_x=-dx_1\wedge o(\calD_x)_x,\quad o(\wcalA_p)_x=-dx_1\wedge o(\calA_x)_x, \]
where we define the orientation of the ascending manifold $\calA_x=\wcalA_p\cap\Sigma_x$ by the rule
\[ o(\calD_x)_x\wedge o(\calA_x)_x=o(\Sigma_x)_x. \]
Then we have
\[ o^*(\wcalD_p)_x=o(\calA_x)_x,\quad o^*(\wcalA_p)_x=(-1)^{\mathrm{ind}\,p}\,o(\calD_x)_x \]
and
\[ o^*(\wcalD_p)_x\wedge o^*(\wcalA_p)_x=o(\calD_x)_x\wedge o(\calA_x)_x=o(\Sigma_x)_x. \]
\end{enumerate}
This definition is independent of the choice of local coordinate.

\subsection{Coorientations of the spaces $\bS_{\ve_i,0}$}\label{ss:coori_S}

We define the coorientations
\[ \begin{split}
  o^*(\bS_{\ve_1,0}(\wf;\wW_i^{(j)}))_{(x,y)}
  &\in \twedge^\bullet T_x^*\wW_i^{(j)}\otimes \twedge^\bullet T_y^*\wW_i^{(j)}\\
  o^*(\bS_{\ve_1,0}(\wf;\rho_1,\wL_1))_{(x,z_1)}
  &\in \twedge^\bullet T_x^*\wW(B(1))\otimes \twedge^{\bullet} T_{z_1}^*\wL_1\\
  o^*(\bS_{\ve_i,0}(\wf;\wL_{i-1},\rho_i,\wL_i))_{(z_{i-1},z_i)}
  &\in \twedge^\bullet T_{z_{i-1}}^*\wL_{i-1}\otimes \twedge^\bullet T_{z_i}^*\wL_i\quad (i\neq 1,m-1)\\
  o^*(\bS_{\ve_m,0}(\wf;\wL_{m-1},\rho_m))_{(z_{m-1},y)}
  &\in \twedge^\bullet T_{z_{m-1}}^*\wL_{m-1}\otimes \twedge^\bullet T_{y}^*\wW(B(r)),
\end{split} \]
where $\mathrm{blo}(\rho_i)=1$, 
as follows. Let $o(M)$ denote a volume form on $M$ giving the orientation and we write $o(\wM)=\pi^* o(M)$. Let 
\[ o(\wL_i)_x=\iota(-\widetilde{\xi}_x)\,o(\wM)_x\quad (x\in \wL_i). \]
The space $S_{0,0}(\wf;\wW_i^{(j)})$ is the restriction of the image of the embedding $\varphi:\wM\times (0,\infty)\to \wM\times \wM$ defined by $\varphi(x,s)=(x,\Phi_{-\widetilde{\xi}}^s(x))$. The Jacobian matrix of $\varphi$ at $(x,y)\in S_{0,0}(\wf;\wW_i^{(j)})$, $y=\Phi_{-\widetilde{\xi}}^s(x)$, is as follows.
\[ (J\varphi)_{(x,y)}=\left(\begin{array}{cc}
  I & O\\
  d\Phi_{-\widetilde{\xi}}^s & -\widetilde{\xi}_y
\end{array}\right) \]
If $T_x\wW_i^{(j)}$ is spanned by an orthonormal basis $e_1,e_2,e_3$, then $T_{(x,y)}S_{0,0}(\wf;\wW_i^{(j)})$ is spanned by $e_1+A(e_1),e_2+A(e_2),e_3+A(e_3), -\widetilde{\xi}_y$, where $A=d\Phi_{-\widetilde{\xi}}^s$. With this in mind, we orient $S_{0,0}(\wf;\wW_i^{(j)})$ as
\[ o(S_{0,0}(\wf;\wW_i^{(j)}))_{(x,y)}=(-d\wf)_y\wedge (dx_1+A_* dx_1)\wedge (dx_2+A_* dx_2)\wedge (dx_3+A_* dx_3), \]
where we assume that $o(\wW_i^{(j)})_x=dx_1\wedge dx_2\wedge dx_3$, where $dx_1,dx_2,dx_3$ is an orthonormal basis of $T_x^*\wW_i^{(j)}$, and $A_*=(d\Phi_{-\widetilde{\xi}}^s)_*:T_x^*\wW_i^{(j)}\to T_y^*\wW_i^{(j)}$. Here, we may assume $dx_1=(-d\wf)_x$ without loss of generality. Then 
\[ o(S_{0,0}(\wf;\wW_i^{(j)}))_{(x,y)}\sim (-d\wf)_y\wedge (-d\wf)_x\wedge (dx_2+A_* dx_2)\wedge (dx_3+A_* dx_3). \]
Similarly, we define
\[ \begin{split}
  &o(S_{0,0}(\wf;\rho_1,\wL_1))_{(x,y)}=(-d\wf)_x\wedge (dx_2+A_* dx_2)\wedge (dx_3+A_* dx_3)\\
  &o(S_{0,0}(\wf;\wL_{m-1},\rho_m))_{(x,y)}=(-d\wf)_y\wedge (dx_2+A_*dx_2)\wedge (dx_3+A_*dx_3)\\
  &o(S_{0,0}(\wf;\wL_{i-1},\rho_i,\wL_i))_{(x,y)}=(dx_2+A_*dx_2)\wedge (dx_3+A_*dx_3)\\
\end{split}\]
For generic points, we define
\[\begin{split}
&o^*(\bS_{0,0}(\wf;\wW_i^{(j)}))_{(x,y)}=*\,o\,(S_{0,0}(\wf;\wW_i^{(j)}))_{(x,y)}\\
&o^*(\bS_{0,0}(\wf;\rho_1,\wL_1))_{(x,z_1)}=*\,o\,(S_{0,0}(\wf;\rho_1,\wL_1))_{(x,z_1)}\\
&o^*(\bS_{0,0}(\wf;\wL_{m-1},\rho_m))_{(z_{m-1},y)}=*\,o\,(S_{0,0}(\wf;\wL_{m-1},\rho_m))_{(z_{m-1},y)}\\
&o^*(\bS_{0,0}(\wf;\wL_{i-1},\rho_i,\wL_i))_{(z_{i-1},z_i)}=*\,o\,(S_{0,0}(\wf;\wL_{i-1},\rho_i,\wL_i))_{(z_{i-1},z_i)}\\
&o^*(\bS_{1,0}(\wf;\wW_i^{(j)}))_{(x,y)}=o^*(\wcalA_{\gamma})_{x}\wedge o^*(\wcalD_\gamma)_{y}\quad (x\in \wcalA_\gamma,y\in\wcalD_\gamma)\\
&o^*(\bS_{1,0}(\wf;\rho_1,\wL_1))_{(x,z_1)}=o^*(\wcalA_{\gamma})_{x}\wedge o^*(\wcalD_\gamma)_{z_1}\quad (x\in \wcalA_\gamma,z_1\in\wcalD_\gamma)\\
&o^*(\bS_{1,0}(\wf;\wL_{m-1},\rho_m))_{(z_{m-1},y)}=o^*(\wcalA_\gamma)_{z_{m-1}}\wedge o^*(\wcalD_\gamma)_{y}\quad (z_{m-1}\in \wcalA_\gamma,y\in\wcalD_\gamma)\\
&o^*(\bS_{1,0}(\wf;\wL_{i-1},\rho_i,\wL_i))_{(z_{i-1},z_i)}=o^*(\wcalA_\gamma)_{z_{i-1}}\wedge o^*(\wcalD_\gamma)_{z_i}\quad (z_{i-1}\in \wcalA_\gamma,z_i\in\wcalD_\gamma),
\end{split}\]
where $*$ is the Hodge star operator (see Remark~\ref{rem:oMM} below) and $\gamma$ is a critical locus that intersects the block. Note that $\wcalD_\gamma$ and $\wcalA_\gamma$  are perpendicular to the level surfaces of $\wf$. Hence $o^*(\wcalD_\gamma)_z,o^*(\wcalA_\gamma)_z\in \twedge^\bullet T_z^*\wL_i^{(j)}$ if $z\in \wL_i^{(j)}$. 

There is no reason that these choices are natural. We fixed these coorientations so that most of the boundaries of the strata of $\bacalM_2(\wf)$ cancel with each other. 

\begin{Rem}\label{rem:oMM}
To determine an orientation of a submanifold $A$ of a manifold $X$ from a coorientation, we need to fix an orientation of the ambient manifold $X$. In this paper, we fix the orientations of $\wM\times \wM$ and $\wM\times \wL_i^{(j)}$ as
\[ \begin{split}
o(\wM\times\wM)_{(x,y)}&=o(\wM)_y\wedge o(\wM)_x,\\
o(\wM\times\wL_i^{(j)})_{(x,y)}&=\iota(-\widetilde{\xi}_y)o(\wM)_y\wedge o(\wM)_x,\\
o(\wL_i^{(j)}\times \wM)_{(x,y)}&=(-1)^3 o(\wM)_y\wedge \iota(-\widetilde{\xi}_x)o(\wM)_x.
\end{split}
 \]
\end{Rem}

\subsection{Coorientation of $\bacalM_2(\wf;\rho)$}

\begin{Def}
We define the coorienatation of the term for $(i_1,\ldots,i_m)$ in (\ref{eq:union_fiber_products}) by 
\[ \begin{split}
  \Pi_{\rho}&\Bigl[
o^*(\bS_{i_1,0}(\wf;\rho_1,\wL_1))_{(x,z_1)}\\
&\otimes o^*(\bS_{i_2,0}(\wf;\wL_1,\rho_2,\wL_3))_{(z_2,z_3)}\otimes\cdots\otimes
o^*(\bS_{i_{m-1},0}(\wf;\wL_{m-2},\rho_{m-1},\wL_{m-1}))_{(z_{m-2},z_{m-1})}\\
&\otimes o^*(\bS_{i_m,0}(\wf;\wL_{m-1},\rho_m))_{(z_{m-1},y)}\Bigr]\\
&\in \twedge^\bullet T_x^*\wW(B(1))
  \otimes \twedge^\bullet T^*_{z_1}\wL_1
  \otimes\cdots
  \otimes \twedge^\bullet T^*_{z_{m-1}}\wL_{m-1}
  \otimes \twedge^\bullet T_y^*\wW(B(r))\\
&\quad=\twedge^\bullet T_{(x,z_1,\ldots,z_{m-1},y)}^*X_\rho,  
\end{split}
 \]
where $\Pi_{\rho}$ is the natural map given by the exterior products $\twedge^\bullet T_{z_i}^*\wL_i\otimes\twedge^\bullet T_{z_i}^*\wL_i\to \twedge^\bullet T_{z_i}^*\wL_i$ at each $\wL_i$. 
\end{Def}

\begin{Exa}\label{exa:1}
We consider the coorientation of $\bacalM_2(\wf;\rho)$ for the descending route in Figure~\ref{fig:ex_desc_route}. Let $\gamma_1,\gamma_2$ and $\gamma_3$ be the three critical loci of $\wf$ that intersect $\wW(B(1)),\wW(B(2))$ and $\wW(B(6))$ respectively. Write $\wW(B(1))=\wW_1$, $\wL(B(1))=\wL_1$, $\wL(B(4))=\wL_4$, $\wL(B(5))=\wL_5$, $\wL(B(9))=\wL_9$ and $\wW(B(10))=\wW_{10}$ for short. Then $X_\rho=\wW_1\times\wL_1\times\wL_4\times\wL_5\times\wL_9\times\wW_{10}$. Put $\rho_1=(B(1))$, $\rho_2=(B(2),B(3),B(4))$, $\rho_3=(B(5))$, $\rho_4=(B(6),B(7),B(8),B(9))$ and $\rho_5=(B(10))$ so that $\underline{\rho}/{\sim}=\{\underline{\rho_1},\underline{\rho_2},\underline{\rho_3},\underline{\rho_4},\underline{\rho_5}\}$. Then by convention in \S\ref{ss:coori_S}, 
\[ \begin{split}
  &o^*(\bS_{0,0}(\wf;\rho_1,\wL_1))_{(x,z_1)}=*\,o\,(S_{0,0}(\wf;\rho_1,\wL_1))_{(x,z_1)},\\
  &o^*(\bS_{1,0}(\wf;\wL_1,\rho_2,\wL_4))_{(z_1,z_4)}=o^*(\wcalA_{\gamma_2})_{z_1}\wedge o^*(\wcalD_{\gamma_2})_{z_4},\\
  &o^*(\bS_{1,0}(\wf;\wL_4,\rho_3,\wL_5))_{(z_4,z_5)}=o^*(\wcalA_{\gamma_1})_{z_4}\wedge o^*(\wcalD_{\gamma_1})_{z_5},\\
  &o^*(\bS_{1,0}(\wf;\wL_5,\rho_4,\wL_9))_{(z_5,z_9)}=o^*(\wcalA_{\gamma_3})_{z_5}\wedge o^*(\wcalD_{\gamma_3})_{z_9},\\
  &o^*(\bS_{0,0}(\wf;\wL_9,\rho_5))_{(z_9,y)}=*\,o\,(S_{0,0}(\wf;\wL_9,\rho_5))_{(z_9,y)}.
\end{split}\]
This gives
\[ \begin{split}
  &o^*(\bS_{3,0}(\wf;\rho))_{(x,z_1,\ldots,z_9,y)}\\
&=*\,o\,(S_{0,0}(\wf;\rho_1,\wL_1))_{(x,z_1)}\wedge o^*(\wcalA_{\gamma_2})_{z_1}\wedge o^*(\wcalD_{\gamma_2})_{z_4}\wedge o^*(\wcalA_{\gamma_1})_{z_4}\wedge o^*(\wcalD_{\gamma_1})_{z_5}\wedge o^*(\wcalA_{\gamma_3})_{z_5}\\
&\quad \wedge o^*(\wcalD_{\gamma_3})_{z_9}\wedge *\,o\,(S_{0,0}(\wf;\wL_9,\rho_5))_{(z_9,y)}\\
&=*\,o\,(S_{0,0}(\wf;\rho_1,\wL_1))_{(x,z_1)}\wedge o^*(\wcalA_{\gamma_2})_{z_1}\wedge \ve_4 o(\wL_4)_{z_4}
\wedge \ve_5 o(\wL_5)_{z_5}\\
&\quad \wedge o^*(\wcalD_{\gamma_3})_{z_9}\wedge *\,o\,(S_{0,0}(\wf;\wL_9,\rho_5))_{(z_9,y)}\\
\end{split}\]
for the signs $\ve_4,\ve_5\in\{-1,1\}$ of the $1/1$-intersections. Here, $*\,o\,(S_{0,0}(\wf;\rho_1,\wL_1))_{(x,z_1)}\wedge o^*(\wcalA_{\gamma_2})_{z_1}$ is equivalent to $o(\wL_x)_x\wedge o^*(\wcalA_{\gamma_2})_{z_1}+\mu\,o^*(\wcalA_{\gamma_2})_x\wedge o(\wL_1)_{z_1}$ for some $\mu>0$, where $\wL_x$ is the level surface of $\wf$ including $x$. Similarly, $o^*(\wcalD_{\gamma_3})_{z_9}\wedge \,*\,o\,(S_{0,0}(\wf;\wL_9,\rho_5))_{(z_9,y)}$ is equivalent to $\mu'\,o(\wL_9)_{z_9}\wedge o^*(\wcalD_{\gamma_3})_y+o^*(\wcalD_{\gamma_3})_{z_9}\wedge o(\wL_y)_y$ for some $\mu'>0$. Hence the evaluations $\mathrm{ev}_i:\bigwedge^\bullet\, T^*_{z_i}\wL_i\to \R$, $\mathrm{ev}_i(\omega)=\omega(f_i)$, with the framings $f_i\in \bigwedge^2\,T_{z_i}\wL_i$ which span the orientations give
\[ (1\otimes\mathrm{ev}_1\otimes \mathrm{ev}_4\otimes \mathrm{ev}_5\otimes \mathrm{ev}_9\otimes 1)(o^*(\bS_{3,0}(\wf;\rho))_{(x,z_1,\ldots,z_9,y)})
\sim \ve_4\ve_5\,o^*(\wcalA_{\gamma_2})_x\wedge o^*(\wcalD_{\gamma_3})_y.
\]
This shows that $o(\bS_{3,0}(\wf;\rho))_{(x,z_1,\ldots,z_9,y)}$ is determined by $o^*(\wcalA_{\gamma_2})_x$, $o^*(\wcalD_{\gamma_3})_y$ and the signs $\ve_4,\ve_5$ of the $1/1$-intersections.
\qed
\end{Exa}

\begin{Lem}\label{lem:coori_consistent}
The orientations induced from those of $\bS_{0,0}(\wf;*)$ and $\bS_{1,0}(\wf;*)$ on the codimension 1 stratum $\bS_{0,1}(\wf;*)$ are opposite where $*=\wW_i^{(j)}, (\rho_1,\wL_1), (\wL_{i-1},\rho_i,\wL_i)$ or $(\wL_{m-1},\rho_m)$.
\end{Lem}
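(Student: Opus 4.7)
The plan is to reduce the statement to a single local sign computation near the critical locus where the break of a generic boundary AL-path $\gamma_0 \in \bS_{0,1}(\wf;*)$ occurs. All four cases of $*$ listed are structurally identical: the coorientations of $\bS_{0,0}$ and of $\bS_{1,0}$ agree away from the block containing the break, and the additional factors at the other $\wL_i$'s are contracted by $\Pi_\rho$ symmetrically on both sides. So it suffices to work near one critical locus $p$ of $\widetilde{\xi}$ and verify the sign claim in a local model.

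First I will fix parametrised Morse coordinates $(x_1,x_2,x_3)$ around $p$ as in \S\ref{ss:ori_DA} and write down explicit collar parametrisations of $\bS_{0,0}$ and $\bS_{1,0}$ near $\gamma_0$. On the $\bS_{0,0}$ side, a collar is described by the input point together with the flow time $s$, the boundary $\bS_{0,1}$ corresponding to $s\to\infty$; the substitution $u=e^{-\lambda s}$, where $\lambda>0$ is the unstable eigenvalue of $-\mathrm{Hess}_p\wf$, turns this into a standard collar with $u=0$ at the boundary. On the $\bS_{1,0}$ side, a collar is described by transverse data in $\wcalA_p$ and $\wcalD_p$ together with the vertical length $t\in[0,t_0)$, the boundary corresponding to $t=0$. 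Both collars limit to the same broken configuration in $\wcalA_p\times_{\{p\}}\wcalD_p$, realising the identification with $\bS_{0,1}$ used in Proposition~\ref{prop:M2(f)}.

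Next I will compute the two induced coorientations by the inward-normal-first convention. The coorientation of $\bS_{0,0}$ involves the frame
\[ (-d\wf)_y\wedge(-d\wf)_x\wedge(dx_2+A_*dx_2)\wedge(dx_3+A_*dx_3) \]
with $A=d\Phi^s_{-\widetilde{\xi}}$; the exponential blow-up of $A_*$ in the unstable direction, combined with a positive rescaling in $u$, makes this frame converge in the limit $u\to 0^+$ to a positive multiple of $o^*(\wcalA_p)_x\wedge o^*(\wcalD_p)_y\wedge du$ (after the Hodge star). The coorientation of $\bS_{1,0}$ is by construction $o^*(\wcalA_p)_x\wedge o^*(\wcalD_p)_y$, and the collar direction $dt$ is its inward conormal. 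Under the identification of both collars with $\bS_{0,1}$ and the ambient orientation conventions of Remark~\ref{rem:oMM}, the inward normals $+\partial_u$ and $+\partial_t$ are glued so that contracting them into the two frames produces induced coorientations on $\bS_{0,1}$ of opposite sign.

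The main obstacle will be the bookkeeping of the $\iota(-\widetilde{\xi})$ factors and the Hodge star in the convention of \S\ref{ss:coori_S}, together with the ordering $o(\wM\times\wM)_{(x,y)}=o(\wM)_y\wedge o(\wM)_x$ of Remark~\ref{rem:oMM} and its interaction with the inward-normal-first convention. However, these enter symmetrically on both sides, so everything cancels out except for the single sign discrepancy coming from the $u\leftrightarrow t$ reparametrisation at the critical locus, which is precisely the minus sign asserted in the lemma. Once the local model is settled, the four cases for $*$ follow without further work because the remaining $\Pi_\rho$-contractions are identical on both strata.
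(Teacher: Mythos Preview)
Your reduction to a single local block and the overall strategy --- compare induced boundary orientations in a parametrised Morse chart near the critical locus --- is exactly what the paper does.  Where you diverge is in the execution: the paper does not invoke an abstract ``$u\leftrightarrow t$ reparametrisation'' or a qualitative exponential blow-up argument, but instead writes down explicit local models for $\bS_{0,0}$ and $\bS_{1,0}$ in parametrised Morse coordinates, treating the cases $\mathrm{ind}\,\gamma=1$ and $\mathrm{ind}\,\gamma=2$ (and $0$) separately, and computes the boundary orientations as explicit $2$-forms (e.g.\ $-(dx_1+dx_1')\wedge dx_3$ versus $(dx_1+dx_1')\wedge dx_3$ in the index-$1$ case).

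The gap in your proposal is that the sentence ``the frame converges in the limit $u\to 0^+$ to a positive multiple of $o^*(\wcalA_p)_x\wedge o^*(\wcalD_p)_y\wedge du$'' is precisely the content of the lemma, and its sign genuinely depends on $\mathrm{ind}\,p$ through the conventions of \S\ref{ss:ori_DA} and Remark~\ref{rem:oMM}; you have asserted it rather than computed it.  Similarly, the claim that the two inward normals ``are glued so that contracting them \dots\ produces opposite sign'' is what must be checked, not assumed: in the paper's model the inward normal to $\bS_{0,0}$ at the face is dual to $dx_2+x_3\,dx_3'$ while that to $\bS_{1,0}$ is along $(1,0,0,-1,0,0)$, and the opposition of the resulting boundary orientations only emerges after the explicit contraction.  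Your outline is correct as a plan, but the actual local sign computation --- done index by index --- is where the proof lives, and you have not carried it out.
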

\begin{proof}
  It suffices to compare the orientations at $(x,x')\in \bS_{0,1}(\wf;\wW_i^{(j)},\wL_i^{(j)})$ induced from those of $\bS_{0,0}(\wf;\wW_i^{(j)},\wL_i^{(j)})$ and $\bS_{1,0}(\wf;\wW_i^{(j)},\wL_i^{(j)})$ such that both $x$ and $x'$ are in a small neighborhood of a point $x_0$ on a critical locus $\gamma$ of $\wf$ that intersects $\wW_i^{(j)}$. 

By convention in \S\ref{ss:coori_S},
\begin{equation}\label{eq:coori_WL} 
\begin{split}
  o^*(\bS_{0,0}(\wf;\wW_i^{(j)},\wL_i^{(j)}))_{(x,x')}&=*\,o\,(S_{0,0}(\wf;\wW_i^{(j)},\wL_i^{(j)}))_{(x,x')}\\
  o^*(\bS_{1,0}(\wf;\wW_i^{(j)},\wL_i^{(j)}))_{(x,x')}&=o^*(\wcalA_\gamma)_x\wedge o^*(\wcalD_\gamma\cap \wL_i^{(j)})_{x'}
\end{split}
\end{equation}
We check that the orientations determined by these coorientations induce opposite orientations at the intersection strata $\bS_{0,1}(\wf;\wW_i^{(j)},\wL_i^{(j)})$ (see Lemma~\ref{lem:M2WL}). 

First, we consider the case where $\mathrm{ind}\,\gamma=1$. It suffices to prove the claim when both $x$ and $x'$ are close to $x_0$. By the parametrized Morse lemma \cite[\S{A1}]{Ig1}, there is a local coordinate around $x_0$, say on a neighborhood $U_{x_0}$, such that 
\begin{itemize}
\item $x_0$ corresponds to the origin.
\item $\wcalA_\gamma(\xi)$ agrees with the $x_1x_3$-plane.
\item $\wcalD_\gamma(\xi)$ agrees with the $x_1x_2$-plane.
\end{itemize}
Let $\widetilde{A}=\{(x_1,0,x_3);x_1,x_3\in\R\}\cap U_{x_0}$, $\widetilde{D}=\{(x_1,x_2,0);x_1,x_2\in\R\}\cap U_{x_0}$. Moreover, we may assume for simplicity that $\wL_i^{(j)}\cap U_{x_0}$ agrees with $\wL=\{(x_1,1,x_2);x_1,x_3\in \R\}\cap U_{x_0}$, that $\wf$ agrees with $\widetilde{h}(x_1,x_2,x_3)=-x_2^2+x_3^2$ on $U_{x_0}$ and that $\widetilde{\xi}=\mathrm{grad}\,\widetilde{h}$ on $U_{x_0}$. Then we see that
\[ \bS_{0,0}(\widetilde{h};U_{x_0},\wL)=\{(x_1,x_2,x_3)\times (x_1,1,x_2x_3);x_1,x_3\in\R, \,0\leq x_2\leq 1\}\cap (U_{x_0}\times \wL).\]
This is the image of the embedding $\varphi:U_{x_0}\to U_{x_0}\times \wL$, $\varphi(x_1,x_2,x_3)=(x_1,x_2,x_3)\times (x_1,1,x_2x_3)$. The boundary of $\bS_{0,0}(\widetilde{h};U_{x_0},\wL)$ corresponds to the faces at $x_2=0,1$. The face at $x_2=0$ intersects $\bS_{1,0}(\widetilde{h};U_{x_0},\wL)$ along $\bS_{0,1}(\widetilde{h};U_{x_0},\wL)$. Now we describe the induced orientation at the face at $x_2=0$. Let $dx_1,dx_2,dx_3$ be the standard basis of $T_x^*U_{x_0}$ and we take the standard basis
\[ dx_i=\mathrm{pr}_1^* dx_i,\quad dx_i'=\mathrm{pr}_2^* dx_i\quad (i=1,2,3)\]
of $T_{(x,x')}^*(U_{x_0}\times U_{x_0})$. By \S\ref{ss:coori_S}, we see that $o(\bS_{0,0}(\widetilde{h};U_{x_0},\wL))_{(x,x')}$ is equivalent to
\[ (dx_1+dx_1')\wedge (dx_2+x_3dx_3')\wedge (dx_3+x_2dx_3')=(dx_1+dx_1')\wedge (dx_2+x_3dx_3')\wedge dx_3 \]
at $x_2=0$. Since $dx_2+x_3dx_3'$ is the dual of an inward normal vector to $\bS_{0,0}(\widetilde{h};U_{x_0},\wL)$ at $(x,x')$, we have
\begin{equation}\label{eq:o(dS00)}
 o(\partial \bS_{0,0}(\widetilde{h};U_{x_0},\wL))_{(x,x')}\sim -(dx_1+dx_1')\wedge dx_3\quad (x_2=0).
\end{equation}
On the other hand, by convention of \S\ref{ss:ori_DA} and by Remark~\ref{rem:oMM}, we have
\[ \begin{split}
  o^*(\bS_{1,0}(\widetilde{h};U_{x_0},\wL))_{(x,x')}
&=o^*(\widetilde{A})_x\wedge o^*(\widetilde{D}\cap \wL)_{x'}\\
&=-\alpha\,dx_2\wedge (-\alpha\,dx_3')=dx_2\wedge dx_3' \quad (\mbox{for some $\alpha\in\{-1,1\}$})
\end{split}\]
and this gives
\[ o(\bS_{1,0}(\widetilde{h};U_{x_0},\wL))_{(x,x')}
=-dx_1\wedge dx_3\wedge dx_1'. \]
An inward normal vector to $\bS_{1,0}(\widetilde{h};U_{x_0},\wL)$ at $(x_1,0,x_3)\times (x_1,1,0)$ is a multiple of $(1,0,0,-1,0,0)$. Hence the induced orientation on the boundary is
\[ o(\partial\bS_{1,0}(\widetilde{h};U_{x_0},\wL))_{(x,x')}
=\iota(1,0,0,-1,0,0)(-dx_1\wedge dx_3\wedge dx_1')=(dx_1+dx_1')\wedge dx_3. \]
This is opposite to (\ref{eq:o(dS00)}). 

Next, we consider the case where $\mathrm{ind}\,\gamma=2$. There is a local coordinate around $x_0$, say on a neighborhood $U_{x_0}$, such that 
\begin{itemize}
\item $x_0$ corresponds to the origin.
\item $\wcalA_\gamma(\xi)$ agrees with the $x_1$-axis.
\item $\wcalD_\gamma(\xi)$ agrees with the $x_1x_2x_3$-plane.
\end{itemize}
Let $\widetilde{A}=\{(x_1,0,0);x_1\in\R\}\cap U_{x_0}$, $\widetilde{D}=\{(x_1,x_2,x_3);x_1,x_2,x_3\in\R\}\cap U_{x_0}$. Instead of $\wL_i^{(j)}$, we consider $\wL=\{(x_1,1,x_3);x_1,x_3\in\R\}$ that is tangent to $\wL_i^{(j)}$ at $(x_1,1,0)$. We may assume for simplicity that $\wf$ agrees with $\widetilde{h}'(x_1,x_2,x_3)=-x_2^2-x_3^2$ on $U_{x_0}$ and that $\widetilde{\xi}=\mathrm{grad}\,\widetilde{h}'$ on $U_{x_0}$ We consider the orientation of $\bacalM_2(\widetilde{h}';U_{x_0},\wL)$ at $(x,x')$. Then we see that
\[ \bS_{0,0}(\widetilde{h}';U_{x_0},\wL)=\{(x_1,x_2,x_2x_3)\times (x_1,1,x_3);x_1,x_3\in\R,\, 0\leq x_2\leq 1\}\cap (U_{x_0}\times \wL).\]
This is the image of the embedding $\psi:U_{x_0}\to U_{x_0}\times \wL$, $\psi(x_1,x_2,x_3)=(x_1,x_2,x_2x_3)\times (x_1,1,x_3)$. The boundary of $\bS_{0,0}(\widetilde{h}';U_{x_0},\wL)$ corresponds to the faces at $x_2=0,1$. The face at $x_2=0$ intersects $\bS_{1,0}(\widetilde{h}';U_{x_0},\wL)$ along $\bS_{0,1}(\widetilde{h}';U_{x_0},\wL)$. Now we describe the induced orientation at the face at $x_2=0$. By \S\ref{ss:coori_S}, we see that $o(\bS_{0,0}(\widetilde{h}';U_{x_0},\wL))_{(x,x')}$ is equivalent to
\[ (dx_1+dx_1')\wedge (dx_2+x_3dx_3)\wedge (x_2dx_3+dx_3')=(dx_1+dx_1')\wedge (dx_2+x_3dx_3)\wedge dx_3' \]
at $x_2=0$. Since $dx_2+x_3dx_3$ is the dual of an inward normal vector to $\bS_{0,0}(\widetilde{h}';U_{x_0},\wL)$ at $(x,x')$, we have
\begin{equation}\label{eq:o(dS00)_ind2}
 o(\partial \bS_{0,0}(\widetilde{h}';U_{x_0},\wL))_{(x,x')}\sim -(dx_1+dx_1')\wedge dx_3'\quad (x_2=0).
\end{equation}
On the other hand, by convention of \S\ref{ss:ori_DA} and by Remark~\ref{rem:oMM}, we have
\[ \begin{split}
  o^*(\bS_{1,0}(\widetilde{h}';U_{x_0},\wL))_{(x,x')}
&=o^*(\widetilde{A})_x\wedge o^*(\widetilde{D}\cap \wL)_{x'}\\
&=\alpha\,dx_2\wedge dx_3\wedge (-\alpha)=-dx_2\wedge dx_3\quad (\mbox{for some $\alpha\in\{-1,1\}$})
\end{split}\]
and this gives
\[ o(\bS_{1,0}(\widetilde{h}';U_{x_0},\wL))_{(x,x')}
=dx_1'\wedge dx_3'\wedge dx_1. \]
An inward normal vector to $\bS_{1,0}(\widetilde{h}';U_{x_0},\wL)$ at $(x_1,0,0)\times (x_1,1,x_3)$ is a multiple of $(1,0,0,-1,0,0)$. Hence the induced orientation on the boundary is
\[ o(\partial\bS_{1,0}(\widetilde{h}';U_{x_0},\wL))_{(x,x')}
=\iota(1,0,0,-1,0,0)(dx_1'\wedge dx_3'\wedge dx_1)=(dx_1+dx_1')\wedge dx_3'. \]
This is opposite to (\ref{eq:o(dS00)}). 

The case where $\mathrm{ind}\,\gamma=0$ is the same as the case where $\mathrm{ind}\,\gamma=2$. This completes the proof.
\end{proof}

\mysection{A chain from the moduli space of AL-paths}{s:chain_mod}

In this section, we shall show that the natural map from $\bacalM_2(\wf)_\Z$ to $\wM\times_\Z\wM$ gives a 4-dimensional $\Q(t)$-chain $P(\wf)$ (Lemma~\ref{lem:b_chain}). We consider the blow-up of $P(\wf)$ along the lifts of the diagonal in $\wM\times_\Z\wM$. The result is a 4-dimensional $\Q(t)$-chain $Q(\wf)$ in the equivariant configuration space $\bConf_2(\wM)_\Z$ and give an explicit formula for $\partial Q(\wf)$ (Theorem~\ref{thm:1}). We show that $Q(\wf)$ is in a sense an explicit representative for Lescop's equivariant propagator (Corollary~\ref{cor:1}). 

\subsection{Signs of AL-paths}\label{ss:sign_AL}

Here we define the signs of AL-paths. Let $\Sigma=\widetilde{\kappa}^{-1}(c)$ for $c\in \R$. Let $p,q$ be critical loci of $\widetilde{\xi}$ of the same index that intersect $\Sigma$ at $p_0,q_0$ respectively. The space
\[ \bacalM_2(\wf;p_0,t^iq_0)=\bacalM_2(\wf)\cap{b}^{-1}(p_0\times t^iq_0) \]
is a compact oriented 0-manifold. Thus the natural map 
\[ {b}:\bacalM_2(\wf;p_0,t^iq_0)\to \{p_0\times t^iq_0\} \]
is a finite covering map and represents a 0-dimensional chain in $\wM\times \wM$, which can be written as $n\cdot p_0\otimes t^i q_0$ for an integer $n$. The integer $n$ is determined as follows. As we have seen in Example~\ref{exa:1}, the coorientation of $\bacalM_2(\wf;p_0,t^iq_0)$ considered in $p_0\times\wL_1\times\cdots\times\wL_{m-1}\times t^iq_0$ is given by
\[ \ve_1\,o(\wL_1)_{z_1}\wedge \ve_2\,o(\wL_2)_{z_2}\wedge\cdots\wedge \ve_r\,o(\wL_r)_{z_r}=\ve_1\ve_2\cdots\ve_r\,o(\wL_1)_{z_1}\wedge o(\wL_2)_{z_2}\wedge\cdots\wedge o(\wL_r)_{z_r} \]
for the signs $\ve_i\in \{-1,1\}$. We define the sign of the 0-simplex ${b}(p_0,z_1,z_2,\ldots,z_r,t^iq_0)$ as $\ve_1\ve_2\cdots\ve_r$. Then the integer $n$ is determined as the sum of the signs of all the 0-simplices.

\subsection{Making $\bacalM_2(\wf)_\Z$ into a $\Q(t)$-chain}

\begin{Lem}\label{lem:b_chain}
The natural map $\bar{b}:\bacalM_2(\widetilde{f})_\Z\to \widetilde{M}\times_\Z\widetilde{M}$ gives a 4-dimensional $\Q(t)$-chain. (We denote the $\Q(t)$-chain by $P(\wf)$.)
\end{Lem}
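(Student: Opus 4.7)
The plan is to decompose $\bacalM_2(\wf)_\Z$ into the pieces provided by Proposition~\ref{prop:M2(f)} and then repackage the (a priori infinite) sum of their images under $\bar{b}$ as a $\Q(t)$-linear combination of finitely many smooth simplices. Concretely, the $\Z$-action on $\bacalM_2(\wf)$ freely permutes the collection $\{\bS_{r,0}(\wf;\rho)\}$ indexed by descending routes $\rho$ in $\mathrm{cd}(\wf)$. A fundamental domain for this action consists of those descending routes whose initial cell lies over a fixed fundamental interval for $\Z$ in $\R$. For each such $\rho$ in the fundamental domain, $\bS_{r,0}(\wf;\rho)$ descends to a smooth compact oriented $4$-manifold with corners in $\bacalM_2(\wf)_\Z$, and the restriction of $\bar{b}$ to it is smooth, hence gives one $4$-simplex in $\wM\times_\Z\wM$. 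The codimension $0$ strata are disjoint by Proposition~\ref{prop:M2(f)}, so no gluing issue arises at the top dimension.

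The central point, and what I expect to be the main obstacle, is that even within the fundamental domain there are infinitely many descending routes: the length $|\rho|$ is unbounded because a route may traverse the union of critical loci through $1/1$-intersections an arbitrary number of times, winding around the $S^1$-base any number of times. I would first observe that any sufficiently long descending route factors canonically into a concatenation of ``elementary'' descending subroutes, namely those whose endpoints lie on the level surfaces $\wL_i$ and which do not fully cross a fundamental interval of $\widetilde{\kappa}$. Grouping the pieces by their winding number $n\in \Z_{\geq 0}$ and assembling the contributions with coefficient $t^n$ turns the naively infinite sum into a formal series $\sum_{n\geq 0} C_n t^n$ of $4$-chains supported in a fixed compact region of $\wM\times_\Z\wM$.

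To complete the argument one must show that $\sum_{n\geq 0} C_n t^n$ sums to a finite $\Q(t)$-linear combination of smooth simplices. The key is that the passage from one $\widetilde{\kappa}$-level to the next is described by a finite ``transition matrix'' $\mathcal{T}$ whose entries are smooth chains recording gradient flow lines and $1/1$-intersections between pairs of critical loci. Concatenation of descending subroutes corresponds to multiplication of this matrix, so in a suitable formal sense $C_n = A\,\mathcal{T}^n B$ for boundary data $A,B$ encoding the initial and terminal pieces of the route. Consequently
\[ \sum_{n\geq 0} C_n t^n = A\,(1 - t\mathcal{T})^{-1} B \]
is a $\Q(t)$-chain, the denominator $\det(1-t\mathcal{T})$ being (up to sign and units) the Lefschetz determinant appearing in the product formula~(\ref{eq:zeta_prod}); this is consistent with the forthcoming Proposition~\ref{prop:gamma_zeta}. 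Once this rational structure is established, Proposition~\ref{prop:M2(f)} provides the smooth manifold-with-corners structure on each $\bS_{r,0}(\wf;\rho)$, and the lemma follows by collecting everything into the single $\Q(t)$-chain $P(\wf)$.
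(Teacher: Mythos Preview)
Your approach is correct and in essence the same as the paper's: both arguments reduce the a~priori infinite sum over winding numbers to a geometric series $A(1-t\mathcal{T})^{-1}B$ in a finite transition matrix, yielding $\Q(t)$-coefficients. The paper organizes the decomposition differently: rather than grouping by descending route~$\rho$, it first splits $\acalM_2(\wf)$ according to the indices of the vertical segments, writing it as a union of pieces $G_{mn}\cup H_{mn}$ where $m,n\in\{0,1\}$ record how many vertical segments of index~$0$ and~$2$ occur. The point of this bookkeeping is that an AL-path can visit an index~$0$ or index~$2$ locus at most once, so only the index~$1$ loci generate unbounded winding; the paper's transition matrix $A_1$ then has \emph{integer} entries (signed counts of AL-paths between index~$1$ critical points over one period), which makes the matrix identity $\sum_n (tA_1)^{n}=(1-tA_1)^{-1}$ literal rather than ``in a suitable formal sense.'' Your matrix $\mathcal{T}$ with chain-valued entries would work too once you observe that between two fixed critical points the moduli space is $0$-dimensional, but you should say this explicitly.

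One point you glossed over: for short routes with $|\rho|\leq 2$, Proposition~\ref{prop:M2(f)} only asserts that $\bS_{r,0}(\wf;\rho)\setminus b^{-1}(\Delta_{\wM})$ is a smooth manifold with corners, not $\bS_{r,0}(\wf;\rho)$ itself. So your assertion that each piece ``descends to a smooth compact oriented $4$-manifold with corners'' needs a separate check near the diagonal. The paper isolates this in the piece $G_{00}$ (AL-paths with no vertical segments at all) and verifies directly, using the parametrized Morse lemma, that the closure of the flow-line moduli space near a critical locus is a union of smooth manifolds with corners; this is the content of the paper's Lemma~\ref{lem:G_chain}. Without this local analysis your argument is incomplete at the diagonal.
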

\begin{proof} If a vertical segment $\sigma$ in an AL-path is on a critical locus of $\wf$ of index $i$, then we say that $\sigma$ has index $i$. Let $G_{mn}$ be the subspace of $\acalM_2(\wf)$ consisting of AL-sequences with no breaks such that the numbers of vertical segments of indices 0, 1, 2 are $m$, 0, $n$ respectively. Let $H_{mn}$ be the subspace of $\acalM_2(\wf)$ consisting of AL-sequences with no breaks such that the numbers of vertical segments of indices 0, 2 are $m$, $n$ respectively and that has at least one vertical segments of index 1. Then
\[ \acalM_2(\wf)=\bigcup_{(m,n)}(G_{mn}\cup H_{mn}). \]
Since an AL-path can visit critical loci of index 2 only once and also that of index 0 only once, $(m,n)$ is one of $(0,0),(1,0),(0,1),(1,1)$. Let $\bG_{mn}$ and $\bH_{mn}$ be the closures of $G_{mn}$ and $H_{mn}$ respectively, in $\bacalM_2(\wf)$. Since $\bH_{mn}$ and $\bG_{mn}$ are invariant under the diagonal $\Z$-action, we have the quotients $(\bH_{mn})_\Z$ and $(\bG_{mn})_\Z$ by the $\Z$-action. By Proposition~\ref{prop:M2(f)}, the different pieces are glued along strata of codimension $\geq 1$. It suffices to check that $(\bH_{mn})_\Z$ and $(\bG_{mn})_\Z$ are $\Q(t)$-chains. This is checked in Lemmas~\ref{lem:H_chain} and \ref{lem:G_chain} below.
\end{proof}

\begin{Lem}\label{lem:H_chain}
$\bH_{mn}$ is the union of smooth compact manifolds with corners whose codimension 0 strata are disjoint from each other. The natural map $(\bH_{mn})_\Z\to \wM\times_\Z\wM$ gives a 4-dimensional $\Q(t)$-chain in $\wM\times_\Z\wM$.
\end{Lem}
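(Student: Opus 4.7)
The plan is to first use Proposition~\ref{prop:M2(f)} to extract the smooth manifold-with-corners structure on $\bH_{mn}$, then to organize the infinitely many components of the quotient $(\bH_{mn})_\Z$ into finitely many families indexed by tuples of non-negative integers, and finally to sum each family as a geometric series in $t$ to obtain a $\Q(t)$-chain.

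First I would decompose $\bH_{mn}=\bigcup_\rho\bigl(\bH_{mn}\cap\bacalM_2(\wf;\rho)\bigr)$ over all descending routes in $\mathrm{cd}(\wf)$. Since $\bH_{mn}$ only imposes a condition on the index pattern of the vertical segments, each piece $\bH_{mn}\cap\bacalM_2(\wf;\rho)$ is the union of those strata $\bS_{r,0}(\wf;\rho)$ in the decomposition of Proposition~\ref{prop:M2(f)} whose rank-$r$ AL-paths have $m$ (resp.\ $n$) vertical segments of index~$0$ (resp.\ index~$2$) and at least one vertical segment of index~$1$. Proposition~\ref{prop:M2(f)} then gives on each piece the structure of a smooth compact manifold with corners, with disjoint codimension-$0$ strata. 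Two different descending routes can share an AL-path only via a vertex or edge relation, both of which correspond to degenerate configurations; hence generic (codimension-$0$) AL-paths belong to a unique $\rho$, so the codimension-$0$ strata stay disjoint across different $\rho$ as well. The diagonal $\Z$-action is free and properly discontinuous, so $(\bH_{mn})_\Z$ inherits the stratified structure, proving the first sentence of the lemma.

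Next I would organize the $\Z$-orbits of contributing descending routes according to a finite \emph{skeleton} together with non-negative integer \emph{length parameters}. The skeleton records, modulo the $\Z$-action, the horizontal segments and the ordered sequence of critical loci visited, leaving free the number of full periods each index-$1$ vertical block winds around its critical locus. The set of skeletons is finite because (i) $\wf$ is $\Z$-invariant and each horizontal segment strictly decreases $\wf$ by at least the positive cellular gap $\eta$, so the total number $s$ of index-$1$ vertical blocks in any AL-path in $(\bH_{mn})_\Z$ is bounded by $(\max\wf-\min\wf)/\eta$; and (ii) the quotient cellular diagram on $M$ has finitely many cells, so only finitely many combinatorial skeletons arise for each bounded $s$.

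For a fixed skeleton with length parameters $(k_1,\dots,k_s)\in\Z_{\geq 0}^s$, let $C_{k_1,\dots,k_s}\subset(\bH_{mn})_\Z$ denote the associated compact stratum. Extending the $i$-th index-$1$ vertical block by one full period of its critical locus $\gamma_i$ (with $\kappa$-degree $d_i\geq 1$) translates the lower endpoint of the AL-path by the covering transformation $t^{d_i}$, and the orientation conventions of \S\ref{s:coori} accumulate over one period into some sign $\epsilon_i\in\{\pm 1\}$, yielding the chain-level identity
\[
 \bar b\bigl(C_{k_1,\dots,k_i+1,\dots,k_s}\bigr)=\epsilon_i\,t^{d_i}\cdot\bar b\bigl(C_{k_1,\dots,k_i,\dots,k_s}\bigr).
\]
Summing the geometric series in each $k_i$ yields
\[
 \sum_{(k_1,\dots,k_s)\in\Z_{\geq 0}^s}\bar b(C_{k_1,\dots,k_s})
 =\frac{\bar b(C_{0,\dots,0})}{\prod_{i=1}^s\bigl(1-\epsilon_i\,t^{d_i}\bigr)}
\]
as an element of the $\Q(t)$-chain complex of $\wM\times_\Z\wM$. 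Summing further over the finitely many skeletons expresses $\bar b\bigl((\bH_{mn})_\Z\bigr)$ as a finite $\Q(t)$-linear combination of $4$-dimensional compact chains, which is precisely a $4$-dimensional $\Q(t)$-chain.

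The main obstacle will be the sign-and-degree bookkeeping in the third step: verifying that extending one vertical block by a full period acts exactly by $\epsilon_i\,t^{d_i}$ on the chain level (this requires tracing the orientation convention of \S\ref{s:coori} through the gluings $\cup_{\rho\cap\rho'}$ in the spirit of Example~\ref{exa:1}) and confirming that the geometric series relation is clean rather than corrupted by boundary degeneracies arising when a length parameter is small. The denominators $1-\epsilon_i\,t^{d_i}$ that emerge here are precisely the factors of the Lefschetz zeta-function formula (\ref{eq:zeta_prod}), which is the conceptual reason this bookkeeping will eventually feed the identity in Theorem~\ref{thm:main}.
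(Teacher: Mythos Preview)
Your approach is workable but genuinely different from the paper's. You parametrize the infinite family by a finite set of \emph{skeletons} together with independent integer length parameters $(k_1,\ldots,k_s)$, and then sum a product of scalar geometric series $\prod_i(1-\epsilon_i t^{d_i})^{-1}$. The paper instead slices $\wM$ into fundamental blocks $M[i]=\widetilde\kappa^{-1}[i,i+1]$ and, for the case $(m,n)=(0,0)$, factorizes $\bH_{00}(M[k],M[j])_{pq}$ as a head $\bH_{0*}(M[k])_p$ (carrying a fixed compact $2$-chain $U_p$), a $0$-dimensional middle $\Omega_{pq}(k,j)$, and a tail $\bH_{*0}(M[j])_q$ (carrying a fixed compact $2$-chain $V_q$); the signed count of the middle is the $(p,q)$ entry of $A_1^{k-j-1}$ for a single transfer matrix $A_1$, so the infinite sum becomes the matrix geometric series $(1-tA_1)^{-1}$. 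The remaining cases $(m,n)\in\{(1,0),(0,1),(1,1)\}$ are then obtained by fiber products with the short index-$0$ and index-$2$ pieces. What the paper's organization buys is that the only infinite summation is over the single period index $n$, and the recursion is manifest because the head and tail chains $U_p,V_q$ are fixed once and for all. Your approach has the pedagogical advantage of making the individual denominators $(1-t^{d_i})$ visible early, foreshadowing the zeta-function factorization.

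There is, however, a genuine wrinkle in your geometric-series step. The identity $\bar b(C_{\ldots,k_i+1,\ldots})=\epsilon_i t^{d_i}\bar b(C_{\ldots,k_i,\ldots})$ is clean for the \emph{interior} length parameters, but for $i=1$ and $i=s$ the base stratum $C_{0,\ldots}$ is typically a proper piece of the $t^{-d_1}$-translate of $C_{1,\ldots}$: once you fix $x$ in a fundamental domain, the requirement that the first vertical segment be descending (i.e.\ $\widetilde\kappa(x)>\ell_1$) cuts down the allowed $x$'s at $k_1=0$ depending on where the first $1/1$-intersection sits in $[0,1)$. So your closed formula $C_{0,\ldots,0}/\prod_i(1-\epsilon_i t^{d_i})$ is not literally correct; one must split off a finite correction or, equivalently, absorb the first and last segments into fixed ``head'' and ``tail'' chains---which is precisely what the paper's factorization does. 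Also, your bound on $s$ is right in spirit but uses the wrong constant: the relevant positive gap is the minimum $\wf$-drop along a $1/1$-intersection flow line, not the cell-decomposition parameter $\eta$; and the bound applies only to the \emph{interior} horizontal segments, since the first and last horizontal segments can have arbitrarily small $\wf$-drop.
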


\begin{proof}
We consider the decomposition $\wM=\bigcup_{i\in \Z}M[i]$ where $M[i]=\widetilde{\kappa}^{-1}[i,i+1]$. Let $\Sigma[i]=\widetilde{\kappa}^{-1}(i)$ for $i\in \Z$. First we consider the simplest case $(m,n)=(0,0)$. Suppose that $k>j$. Let $\bH_{00}(M[k],M[j])$ be the subspace of $\bH_{00}$ consisting of AL-sequences from a point of $M[k]$ to a point of $M[j]$. The critical loci of $\wf$ intersects $\Sigma[k]$ transversally at finitely many points. Let $x_1,x_2,\ldots,x_r\in\Sigma[k]$ be all the intersection points with critical loci of index 1 and let $y_i=t^{k-j-1}x_i\in \Sigma[j+1]$. Let $\gamma_1,\ldots,\gamma_r\subset \wM$ be the critical loci of $\wf$ of index 1 that intersect $x_1,x_2,\ldots,x_r$ respectively and let $c\ell_1=\bigcup_{j=1}^r\gamma_j$. Then $\bH_{00}(M[k],M[j])$ can be written as
\[ \bH_{00}(M[k],M[j])=\coprod_{1\leq p,q\leq r}\bH_{00}(M[k],M[j])_{pq}, \]
where $\bH_{00}(M[k],M[j])_{pq}$ be the subspace of $\bH_{00}(M[k],M[j])$ consisting of AL-sequences $\gamma$ such that
\begin{enumerate}
\item $\mathrm{Im}\,\gamma\cap c\ell_1\cap\Sigma[k]=\{x_p\}$,
\item $\mathrm{Im}\,\gamma\cap c\ell_1\cap\Sigma[j+1]=\{y_q\}$.
\end{enumerate}
Note that there are no $1/1$-intersection on $\Sigma[k]$ and on $\Sigma[j+1]$. Let $\bH_{0*}(M[k])_p$ be the subspace of $\bacalM_2(\wf)$ consisting of AL-sequences from a point in $M[k]$ to the point $x_p$. Let $\bH_{*0}(M[j])_q$ be the subspace of $\bacalM_2(\wf)$ consisting of AL-sequences from the point $y_q$ to a point in $M[j]$. 

Now we decompose the evaluation map $(\bH_{00})_\Z\to \wM\times_\Z\wM$ as a sum of smooth maps from the compact pieces $\bH_{00}(M[k],M[j])_{pq}$. There is a natural homeomorphism
\[ \omega_{pq}(k,j):\bH_{00}(M[k],M[j])_{pq}
\stackrel{\approx}{\to} \bH_{0*}(M[k])_p\times \Omega_{pq}(k,j)\times \bH_{*0}(M[j])_q, \]
where $\Omega_{pq}(k,j)$ is the moduli space of AL-paths from $x_p$ to $y_q$. The evaluation map $\mathrm{ev}_{pq}(k,j):\Omega_{pq}(k,j)\to \{x_p\times y_q\}$ is a finite covering map. We define $\mathrm{pr}_1:\bH_{0*}(M[k])_p\to M[k]$ and $\mathrm{pr}_2:\bH_{*0}(M[j])_q\to M[j]$ as
\[ \mathrm{pr}_1(\gamma_1)=\sigma_1(\mu_1),\quad
\mathrm{pr}_2(\gamma_2)=\sigma_{N'}'(\nu_{N'}'), \]
where $\gamma_1=(\sigma_1,\ldots,\sigma_N)\in \bH_{0*}(M[k])_p$, $\gamma_2=(\sigma_1',\ldots,\sigma_{N'}')\in\bH_{*0}(M[j])_q$, $\sigma_i:[\mu_i,\nu_i]\to M$ and $\sigma_j':[\mu_j',\nu_j']\to M$. The natural map
\[ \varphi_{pq}(k,j):\bH_{00}(M[k],M[j])_{pq}\to M[k]\times M[j] \]
is factorized as $\varphi_{pq}(k,j)=\iota\circ(\mathrm{pr}_1\times\mathrm{ev}_{pq}(k,j)\times\mathrm{pr}_2)\circ \omega_{pq}(k,j)$, where $\iota:M[k]\times\{x_p\times y_q\}\times M[j]\to M[k]\times M[j]$ is the projection map. Then $\varphi_{pq}(k,j)$ can be considered as a chain in $M[k]\times M[j]$. We define the 4-dimensional chain $\Phi(k,j)$ in $M[k]\times M[j]$ by
\[ \Phi(k,j)=\sum_{1\leq p,q\leq r}\varphi_{pq}(k,j). \]
This can be represented by the evaluation map $\bH_{00}(M[k],M[j])\to M[k]\times M[j]$, which gives the endpoints of paths. We define the chain $\Phi$ in $\wM\times_\Z\wM$ by the formal sum
\[ \Phi=\sum_{n=0}^\infty\Phi(n,0), \]
which can be represented by the evaluation map $(\bH_{00})_\Z\to \wM\times_\Z\wM$. 

We check that $\Phi$ is well-defined as a $\Q(t)$-chain, by an analogous argument as \cite{Pa1}. Let $U_p[k]$ be the 2-dimensional chain in $M[k]$ represented by $\mathrm{pr}_1:\bH_{0*}(M[k])_p\to M[k]$ and let $V_q[j]$ be the 2-dimensional chain in $M[j]$ represented by $\mathrm{pr}_2:\bH_{*0}(M[j])_q\to M[j]$. Let $n_{pq}$ be the integer determined by the equation
\[ \mathrm{ev}_{pq}(2,0)_\sharp(\Omega_{pq}(2,0))=n_{pq}(x_p\otimes y_q), \]
along the convention in \S\ref{ss:sign_AL}. Let $A_1$ denote the matrix $(n_{pq})$. Then we have
\[ \begin{split}
  \varphi_{pq}(n,0)&=(A_1^{n-1})_{pq}U_p[n]\otimes V_q[0]=(A_1^{n-1})_{pq}U_p[1]\otimes V_q[-n+1]\\
  &=(A_1^{n-1})_{pq}t^{n-1}(U_p[1]\otimes V_q[0])=(tA_1)_{pq}^{n-1}U_p[1]\otimes V_q[0]. 
\end{split}\]
Therefore,
\[ \begin{split}
\Phi(n,0)&=\sum_{1\leq p,q\leq r}(tA_1)_{pq}^{n-1}U_p[1]\otimes V_q[0] \quad\mbox{ (for $n\geq 1$)},\\
 \sum_{n=0}^\infty\Phi(n,0)
&=\Phi(0,0)+\sum_{1\leq p,q\leq r}(1-tA_1)^{-1}_{pq}U_p[1]\otimes V_q[0]. 
\end{split}\]
Since $(1-tA_1)^{-1}$ is a matrix with entries in $\Q(t)$ and $\Phi(0,0)$ is compact, this shows that $\Phi$ represents a $\Q(t)$-chain in $\wM\times_\Z\wM$.

For $(m,n)=(1,0)$, let $\bH_{00}(\gamma_i,\wM)$ be the subspace of $\bH_{00}$ consisting of AL-sequences from a point of a critical locus $\gamma_i$ of index 2 to $\wM$. Let $\bH_{1*}(\gamma_i,\gamma_i)$ be the subspace of $\bH_{10}$ consisting of AL-sequences between two points in $\gamma_i$. Note that a sequence in $\bH_{1*}(\gamma_i,\gamma_i)$ consists of only the endpoints since the index of $\gamma_i$ is 2. From the result for $\bH_{00}$ above, we see that the restrictions of $\bar{b}$ to $\bH_{00}(\gamma_i,\wM)_\Z$ and $\bH_{1*}(\gamma_i,\gamma_i)_\Z$ give $\Q(t)$-chains in $\gamma_i\times_\Z\wM$ and $\gamma_i\times_\Z\gamma_i$ respectively. Then $(\bH_{10})_\Z$ is the image of the projection from the fiber product
\[ \bH_{1*}(\gamma_i,\gamma_i)\times_{\gamma_i}\bH_{00}(\gamma_i,\wM). \]
The restriction of $\bar{b}$ to the image gives a $\Q(t)$-chain. The case $(m,n)=(0,1)$ is symmetric to this case. 

For $(m,n)=(1,1)$, let $\bH_{00}(\gamma_i,\gamma_j)$ be the subspace of $\bH_{00}$ consisting of AL-sequences from a point in $\gamma_i$ to a point in $\gamma_j$. Then $(\bH_{11})_\Z$ is the image of the projection from the fiber product
\[ \bH_{1*}(\gamma_i,\gamma_i)\times_{\gamma_i}\bH_{00}(\gamma_i,\gamma_j)\times_{\gamma_j}\bH_{*1}(\gamma_j,\gamma_j). \]
The restriction of $\bar{b}$ to the image gives a $\Q(t)$-chain. 
\end{proof}

\begin{Lem}\label{lem:G_chain}
$\bG_{mn}$ is the union of smooth compact manifolds with corners whose codimension 0 strata are disjoint from each other. The natural map $\bar{b}:(\bG_{mn})_\Z\to \wM\times_\Z\wM$ gives a 4-dimensional $\Q(t)$-chain in $\wM\times_\Z\wM$.
\end{Lem}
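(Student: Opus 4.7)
The strategy is analogous to the proof of Lemma~\ref{lem:H_chain}, with significant simplifications arising from the fact that AL-paths in $\bG_{mn}$ contain no index-1 vertical segments. Since an index-2 critical point is a fiber maximum and an index-0 critical point is a fiber minimum, a vertical segment on an index-2 (resp.\ index-0) critical locus can only occur as the first (resp.\ last) segment of an AL-path; hence the combinatorial shape of an AL-path in $\bG_{mn}$ is determined by the pair $(m,n)\in\{(0,0),(1,0),(0,1),(1,1)\}$. For the piecewise smooth structure, $\bG_{mn}$ is a countable union of strata $\bS_{m+n,0}(\wf;\rho)$ taken over descending routes $\rho$ of the appropriate shape, and Proposition~\ref{prop:M2(f)} yields the manifold-with-corners structure on each piece together with the disjointness of codimension-0 strata.

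The $\Q(t)$-chain structure is obtained by a geometric series argument, treated separately for each $(m,n)$. When $(m,n)=(0,0)$, every AL-path has a single horizontal segment and lies in a single fiber of $\widetilde\kappa$, so $\bG_{00}$ fibers smoothly over $\R$ with compact fiber (the moduli of possibly broken negative gradient flow lines of $f_s$); hence $(\bG_{00})_\Z$ is compact and yields a $\Q$-chain, which is a fortiori a $\Q(t)$-chain. When $(m,n)=(1,0)$, an AL-path splits as $(\sigma_1,\sigma_2)$ with $\sigma_1$ horizontal in a fiber and $\sigma_2$ a descent of length $t_1\geq 0$ along some index-0 critical locus $\delta_\alpha$. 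Slicing by $t_1\in[n,n+1]$ for $n\in\Z_{\geq 0}$ produces compact $4$-dimensional pieces whose pushforwards $C_n$ to $\wM\times_\Z\wM$ satisfy $C_{n+1}=t\cdot C_n$, since by the convention $\widetilde\kappa(ty)=\widetilde\kappa(y)-1$ the shift $t_1\mapsto t_1+1$ translates the endpoint $y=\delta_\alpha(\widetilde\kappa(x)-t_1)$ to $ty$. Therefore the evaluation map assembles the $C_n$'s into
\[ \sum_{n\geq 0}C_n=\Bigl(\sum_{n\geq 0}t^n\Bigr)C_0=\frac{1}{1-t}\,C_0, \]
a well-defined $\Q(t)$-chain in $\wM\times_\Z\wM$; summing over the finitely many index-0 critical loci $\delta_\alpha$ finishes this case. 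The case $(0,1)$ is symmetric.

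Finally, for $(m,n)=(1,1)$ an AL-path decomposes as (vertical of length $t_1\geq 0$ on $\gamma_i$, horizontal in a fiber from $\gamma_i$ to $\delta_\alpha$, vertical of length $t_2\geq 0$ on $\delta_\alpha$), and $(\bG_{11})_\Z$ can be realized as the projection from an iterated fiber product of the building blocks already treated. Slicing $(t_1,t_2)\in[n_1,n_1+1]\times[n_2,n_2+1]$ yields compact pieces $C_{n_1,n_2}$; incrementing either index pushes forward to a single application of $t$ on the $y$-coordinate, so that
\[ \sum_{n_1,n_2\geq 0}C_{n_1,n_2}=\Bigl(\sum_{n_1,n_2\geq 0}t^{n_1+n_2}\Bigr)C_{0,0}=\frac{1}{(1-t)^{2}}\,C_{0,0}, \]
again a $\Q(t)$-chain. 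The main technical obstacle is to verify that the identification of the parameter shifts $t_i\mapsto t_i+1$ with the covering action $t$ on $\wM\times_\Z\wM$ is compatible with the orientation conventions of~\S\ref{s:coori} and the signing rule of~\S\ref{ss:sign_AL}, so that the formal geometric series above truly equals the pushforward chain at the chain level; once this is in place, the remaining verifications are routine. Unlike in Lemma~\ref{lem:H_chain}, no non-trivial monodromy matrix intervenes, precisely because no index-1 vertical segments occur in $\bG_{mn}$.
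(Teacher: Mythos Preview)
Your argument misses the one point that the paper's own proof singles out as the only nontrivial issue: the piecewise smooth structure of $\bG_{00}$ along the diagonal $\Delta_{\wM}$, and in particular near the diagonal of a critical locus. You invoke Proposition~\ref{prop:M2(f)} to get the manifold-with-corners structure on each $\bS_{r,0}(\wf;\rho)$, but for $|\rho|\leq 2$ that proposition only controls $\bS_{r,0}(\wf;\rho)-b^{-1}(\Delta_{\wM})$; it says nothing about what happens at the diagonal. Since $\bG_{00}$ consists of closures of rank-$0$ AL-paths in a single fiber, it genuinely contains $\Delta_{\wM}$, and near an index-$1$ critical point the local model is not obviously a manifold with corners. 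The paper handles this by an explicit parametrized-Morse-lemma computation: with $h(x_1,x_2)=-x_1^2+x_2^2$ the moduli space near the critical point is the image of $\rho(\eta,a,b)=(\eta a,b)\times(a,\eta b)$, and one checks via the homeomorphism $(\eta a,b,a,\eta b)\mapsto(a,b,\eta\sqrt{a^2+b^2})$ onto $\{(a,b,\eta'):0\leq\eta'\leq\sqrt{a^2+b^2}\}$ that this cone is a union of four smooth manifolds with corners meeting at the origin. Without this local check, the first assertion of the lemma is unproved for $\bG_{00}$, and your claim that $(\bG_{00})_\Z$ ``fibers smoothly over $\R$ with compact fiber'' hides exactly this difficulty in the word ``smoothly''.

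A secondary remark: your geometric-series formulas $\frac{1}{1-t}$ and $\frac{1}{(1-t)^2}$ tacitly assume that the monodromy $\varphi$ fixes each index-$0$ and index-$2$ critical point individually. In general $\varphi_{\sharp 0}$ and $\varphi_{\sharp 2}$ are permutation matrices, so a critical locus can have $\kappa$-degree $d>1$; then $\pi^{-1}(\delta)$ has $d$ components permuted cyclically by $t$, and shifting the vertical length by $1$ does \emph{not} coincide with the covering action $y\mapsto ty$ on that component. The correct bookkeeping replaces $(1-t)^{-1}$ by $(1-tP)^{-1}$ for the relevant permutation matrix $P$, exactly parallel to the matrix $(1-tA_1)^{-1}$ in Lemma~\ref{lem:H_chain}. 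This does not affect the conclusion---entries of $(1-tP)^{-1}$ still lie in $\Q(t)$---but your identity $C_{n+1}=t\cdot C_n$ is false as stated when $d>1$.
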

\begin{proof}
The proof is parallel to Lemma~\ref{lem:H_chain}. The only thing to be checked is that $\bG_{00}$, the one that intersects $\Delta_{\wM}$, is the union of smooth manifolds with corners whose codimension 0 strata are disjoint from each other. By Proposition~\ref{prop:M2(f)}, it suffices to study only the piecewise smooth structure near $\Delta_{\wM}$, in particular, near the diagonal set of a critical locus. We consider only a small neighborhood of a critical locus of index 1 since the cases of other indices are easier than this. By the parametrized Morse lemma \cite[\S{A1}]{Ig1}, it suffices to consider the trivial 1-parameter family of standard Morse functions $h:\R^2\to \R$, $h(x_1,x_2)=-x_1^2+x_2^2$. For simplicity, we assume that $\widetilde{\xi}$ is its gradient with respect to the Euclidean metric, without loss of generality. It follows from the proof of \cite[Lemma~2.15]{Wa1} that the closure of the moduli space of flow lines $\calM_2(h)$ of $h$ is the image of the map
\[ \begin{split}
  &\rho:[0,1]\times \R^1\times\R^1\to \R^2\times \R^2,\\
  &\rho(\eta,a,b)=(\eta a,b)\times (a,\eta b).
\end{split}\]
Let $A$ be the image of $\rho$. Let 
\[ A'=\{(a,b,\eta');a,b\in\R^1,\eta'\in[0,\sqrt{a^2+b^2}]\}. \]
Then the mapping $(\eta a,b)\times (a,\eta b)\mapsto (a,b,\eta\sqrt{a^2+b^2})$ defines a homeomorphism $\alpha:A\to A'$, which is smooth except for the origin. Then one may see that $A'$ is the union of four smooth manifolds with corners, as in Figure~\ref{fig:corner_diag}. This completes the proof.
\begin{figure}
\fig{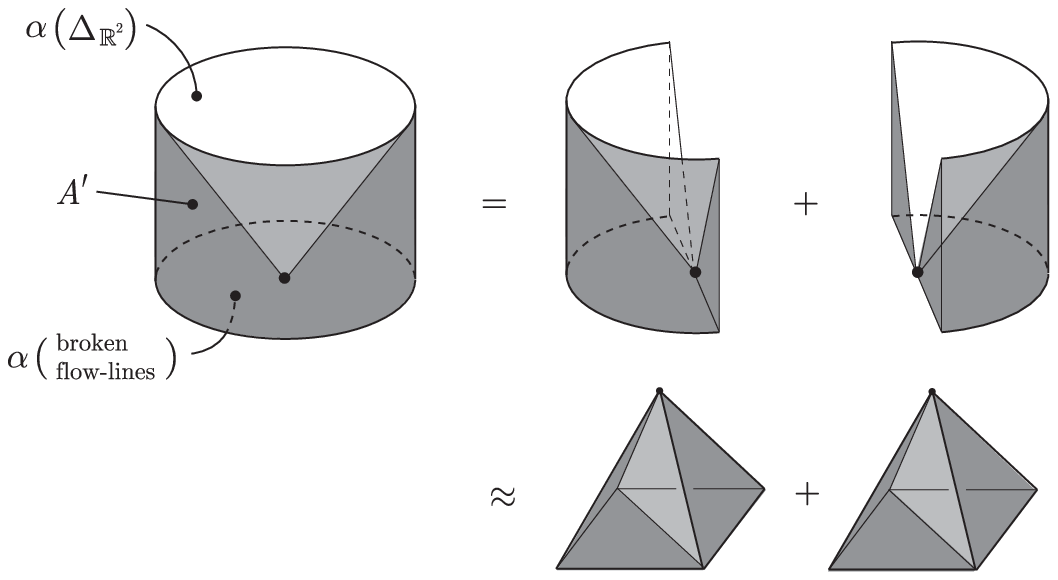}
\caption{}\label{fig:corner_diag}
\end{figure}
\end{proof}

\begin{Lem}\label{lem:bd_diag}
The boundary of the $\Q(t)$-chain $P(\wf)$ concentrates on the lift of $\Delta_M$.
\end{Lem}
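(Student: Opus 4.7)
The plan is to show that every codimension-$1$ stratum contributing to the boundary of the chain $P(\widetilde f)$ either cancels against another stratum or is mapped by $\bar b$ into the lift of $\Delta_M$. By Proposition~\ref{prop:M2(f)}, for each descending route $\rho$ the boundary of $\bacalM_2(\widetilde f;\rho)$ decomposes into the break strata $\bS_{r,1}(\widetilde f;\rho)$ and the ambient-boundary strata $\bT_{r,0}(\widetilde f;\rho)$; recall that by definition $\bT_{r,0}=\bS_{r,0}\cap \partial X_\rho$, so it collects every endpoint-on-$\wGamma$, endpoint-on-$\wL$, and endpoint-on-horizontal-face $\widetilde\kappa^{-1}(a_j)$ degeneracy. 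I will show that the break strata cancel internally within each $\bacalM_2(\widetilde f;\rho)$, while the $\bT$-strata cancel across different $\rho$ under the amalgamations of Definition~\ref{def:M2(f)}, leaving only strata whose image is contained in $\widetilde\Delta_M$.

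For the break strata, Proposition~\ref{prop:M2(f)} shows that $\bS_{r,1}(\widetilde f;\rho)$ appears in both $\partial \bS_{r,0}$ and $\partial \bS_{r+1,0}$, and the argument of Lemma~\ref{lem:coori_consistent} generalizes from ranks $(0,1)$ to adjacent ranks $(r,r{+}1)$ via exactly the same local model computation: the induced coorientations on the common face are opposite. Hence in the single chain $\bacalM_2(\widetilde f;\rho)=\bigcup_r \bS_{r,0}(\widetilde f;\rho)$ the break contributions cancel in pairs, and only $\bigcup_r\bT_{r,0}(\widetilde f;\rho)$ remains. The $\wL$-endpoint part of $\bT_{r,0}$ is then identified via the edge relation in Definition~\ref{def:M2(f)} with a codimension-$1$ stratum of $\bacalM_2(\widetilde f;\rho')$ for the one-cell extension $\rho'$ of $\rho$; the $\wGamma$-endpoint part is identified via the seven vertex relations of Figure~\ref{fig:desc_r_vertex}; and the horizontal-face part along $\widetilde\kappa^{-1}(a_j)$ is identified via the edge-relation amalgamations between descending routes in $\mathrm{cd}(\widetilde f|_{I_{j-1}})$ and $\mathrm{cd}(\widetilde f|_{I_j})$. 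In each case, a local-model calculation parallel to that in the proof of Lemma~\ref{lem:coori_consistent} shows that the induced coorientations from the two sides are opposite, so after the amalgamation the shared stratum becomes interior to $\bacalM_2(\widetilde f)_\Z$ and contributes nothing to $\partial P(\widetilde f)$.

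The only codimension-$1$ strata that survive these cancellations are the pieces of $\bG_{00}$ where a rank-$0$ AL-path degenerates to a point; this is forced by the noncompactness of $\bacalM_2(\widetilde f;\wW_i^{(j)})-\Delta_{\wW_i^{(j)}}$ in Lemma~\ref{lem:M2WW} and of the analogous loci in Lemmas~\ref{lem:M2longcell} and \ref{lem:M2block}. The image of this surviving boundary under $\bar b$ lies on $\widetilde\Delta_M$, as required. The main obstacle is the bookkeeping of induced orientation signs at each of the three identification mechanisms (edge-relation gluing along $\wL$, vertex-relation gluing along $\wGamma$, and edge-relation gluing across a horizontal face $\widetilde\kappa^{-1}(a_j)$). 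For each one must produce a local-model computation, parallel to those carried out in the proof of Lemma~\ref{lem:coori_consistent}, confirming that the coorientation conventions fixed in \S\ref{s:coori} are calibrated precisely so that the two matching coorientations are opposite; once this has been checked case by case the cancellations above follow mechanically and the conclusion $\partial P(\widetilde f)\subset \bar\pi^{-1}(\Delta_M)$ is immediate.
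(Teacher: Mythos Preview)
Your overall architecture is exactly the paper's: use Proposition~\ref{prop:M2(f)} to see that the codimension-1 boundary of each $\bS_{r,0}(\widetilde f;\rho)$ consists of the break strata $\bS_{r,1}$ and the ambient-boundary strata $\bT_{r,0}$, kill the former by Lemma~\ref{lem:coori_consistent}, and kill the latter by matching across the edge and vertex amalgamations of Definition~\ref{def:M2(f)}.  The paper then carries out the orientation check you defer, and does so explicitly for each relation type.

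Two corrections to your bookkeeping, though.  First, your sorting of the three kinds of $\bT$-faces against the two relation types is off.  The $\wGamma$-endpoint faces are \emph{not} handled by the vertex relations; they are handled by edge relations too (shortening $\rho$ at the tail, or extending at the head --- note $\wGamma(B(r))=\wL(B(r-1))$ when $B(r)$ sits to the right of $B(r-1)$).  Likewise, the ``bottom-side'' edge relation absorbs only the horizontal-face degeneracies of the \emph{endpoints} $x,y$.  The vertex relations are needed for the \emph{internal} horizontal-face degeneracies, i.e.\ when some intermediate coordinate $z_i\in\wL_i$ hits $\partial\wL_i$ at a vertex of $\mathrm{cd}(\widetilde f)$; that is where the seven pictures in Figure~\ref{fig:desc_r_vertex} enter.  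Second, the paper's edge-relation computation is not the same local model as Lemma~\ref{lem:coori_consistent}: it is a direct comparison of the coorientation $o^*(\bS_{0,0}(\widetilde f;\wW_i^{(j)},\wW_{i-1}^{(j)}))$ against $o^*(\bS_{0,0}(\widetilde f;\wW_i^{(j)},\wL_i^{(j)}))$ using the explicit formulae of \S\ref{ss:coori_S}, and the vertex-relation check (especially cases~(6),~(7)) needs a separate argument involving the evaluation map $\mathrm{ev}_z$.  None of this changes the strategy, but the computations you flag as ``parallel'' are in fact distinct, and the paper works them out rather than reducing them to Lemma~\ref{lem:coori_consistent}.
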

\begin{proof}
By Proposition~\ref{prop:M2(f)}, strata $\bS_{r,0}(\wf;\rho)$ are glued together along the codimension 1 strata $\bS_{r,1}(\wf;\rho)$ and $\bT_{r,0}(\wf;\rho)$. Proposition~\ref{prop:M2(f)} shows that for each codimension 1 strata $T$ of $\bacalM_2(\wf)_\Z$, there are at most two codimension 0 strata having $T$ as a face. So it suffices to check that the orientations of the strata $\bS_{r,0}(\wf;\rho)$ are consistent at the codimension 1 strata $\bS_{r,1}(\wf;\rho)$ and $\bT_{r,0}(\wf;\rho)$.

The consistency at $\bS_{r,1}(\wf;\rho)$ has been proved in Lemma~\ref{lem:coori_consistent}. 

The consistency at $\bT_{r,0}(\wf;\rho)$: It suffices to check the consistency at the two basic relations considered in \S\ref{ss:def_M2}. 

{\bf Edge relation.} The proof is done by direct computations with the convention given in \S\ref{ss:coori_S}. For example, suppose $(x,z,y)\in\bS_{0,0}(\wf;\wW_i^{(j)},\wW_{i-1}^{(j)})\subset \wW_i^{(j)}\times \wL_i^{(j)}\times\wW_{i-1}^{(j)}$ is such that both $x$ and $y$ are close to $z$. Let $\wL_x,\wL_z,\wL_y$ be the level surfaces of $\wf$ including $x,z,y$ respectively. Let $dx_1,dx_2\in T_x^*\wL_x$ be the dual basis of an orthonormal basis of $T_x\wL_x$ such that $dx_1\wedge dx_2=o(\wL_x)_x$. Let $\{dz_1,dz_2\}\subset T_z^*\wL_z$ and $\{dy_1,dy_2\}\subset T_y^*\wL_y$ be the bases induced by the gradient flow for $-\widetilde{\xi}$ from $\{dx_1,dx_2\}$. Then by convention, we have 
\[ \begin{split}
&o^*(\bS_{0,0}(\wf;\wW_i^{(j)},\wW_{i-1}^{(j)}))_{(x,z,y)}\\
=&o^*(\bS_{0,0}(\wf;\wW_i^{(j)},\wL_i^{(j)}))_{(x,z)}\wedge o^*(\bS_{0,0}(\wf;\wL_i^{(j)},\wW_{i-1}^{(j)}))_{(z,y)}\\
=&(dx_1-dz_1)\wedge (dx_2-dz_2)
\wedge (dz_1-dy_1)\wedge (dz_2-dy_2)+O(\sqrt{d(x,z)^2+d(z,y)^2})\\
=&o(\wL_i^{(j)})_z\wedge (dx_1-dy_1)\wedge (dx_2-dy_2)+O(\sqrt{d(x,z)^2+d(z,y)^2}),
\end{split}\]
where $d(\cdot,\cdot)$ is the geodesic distance on $\wM$. Indeed, $\langle dx_j-dz_j, dx_k+dz_k\rangle =0 + O(d(x,z))$, $\langle dz_j-dy_j,dz_k+dy_k\rangle = 0+O(d(z,y))$, the exterior product of 
\[ o(S_{0,0}(\wf;\wW_i^{(j)},\wW_{i-1}^{(j)}))_{(x,z,y)}=(-d\wf)_y\wedge (-d\wf)_x\wedge (dx_1+dz_1+dy_1)\wedge (dx_2+dz_2+dy_2)\]
with $(dx_1-dz_1)\wedge (dx_2-dz_2)
\wedge (dz_1-dy_1)\wedge (dz_2-dy_2)$ is $9(-d\wf)_y\wedge (-d\wf)_x\wedge (dx_1\wedge dx_2)\wedge (dz_1\wedge dz_2)\wedge (dy_1\wedge dy_2)=9\,o(\wW_i^{(j)})_y\wedge o(\wL_i^{(j)})_z\wedge o(\wW_i^{(j)})_x$. 
On the other hand, if moreover $(x,y)\in\bS_{0,0}(\wf;\wW_i^{(j)},\wL_i^{(j)})\subset \wW_i^{(j)}\times \wL_i^{(j)}$, then 
\[ o^*(\bS_{0,0}(\wf;\wW_i^{(j)},\wL_i^{(j)}))_{(x,y)}
=(dx_1-dy_1)\wedge (dx_2-dy_2)+O(d(x,y)).\]
The two coorientations contribute to the sign in the $\Q(t)$-chain $P(\wf)$ in the same way. 

{\bf Vertex relation.} For the relation (1), let $(B_1,B_2,B_3)$ (resp. $(B_1,B_2',B_3)$) be the three successive cells that correspond to the left hand side (resp. right hand side) of (1). Let $\rho$ and $\rho'$ be the descending routes corresponding to the two sides of (1). The spaces $X_\rho$ and $X_{\rho'}$ are of the following forms.
\[ \begin{split}
  X_\rho=X_1\times \wL(B_1)\times X_2, \quad X_{\rho'}=X_1\times \wL(B_2')\times X_2.
\end{split}\]
The union of $\wL(B_1)$ and $\wL(B_2')$ is smooth. By convention for the orientation of level surface, we have
\[ o(\wL(B_1))_z=\iota(-\widetilde{\xi}_z)\,o(\wM)_z=o(\wL(B_2'))_z \]
for $z\in \wL(B_1)\cap \wL(B_2')$. This shows the consistency of the orientations on $X_\rho\cap X_{\rho'}$. The relations (2) $\sim$ (5) are similar to this case.

For the relation (6), let $(B_1,B_3)$ (resp. $(B_1,B_2',B_3)$) be the successive cells on the left hand side (resp. right hand side) of (6). It is enough to consider the case $\rho=(B_1,B_3)$ and $\rho'=(B_1,B_2',B_3)$. In this case, the spaces $X_\rho$ and $X_{\rho'}$ are as follows.
\[ \begin{split}
  X_\rho=\wW(B_1)\times \wW(B_3),\quad X_{\rho'}=\wW(B_1)\times \wL(B_2')\times \wW(B_3).
\end{split}\]
The consistency for $o^*(\bS_{0,0}(\wf;\rho))$ and $o^*(\bS_{0,0}(\wf;\rho'))$ is similar to that of the edge relation. The consistency for $o^*(\wcalD_\gamma)_y$ and $o^*(\wcalD_\gamma)_z\wedge o^*(\bS_{0,0}(\wf;\wL(B_2'),\wW(B_3)))_{(z,y)}$, $y\in \wcalD_\gamma\cap\wW(B_1)\cap \wW(B_3)$, $z\in \wcalD_\gamma\cap \wW(B_1)\cap \wL(B_2')\cap \wW(B_3)$, $\gamma\cap \wW(B_2')\neq\emptyset$, $\mathrm{ind}\,\gamma=1$, can be checked as follows. (The case $\mathrm{ind}\,\gamma=2$ is easier than this case.)
\[ \begin{split}
  &o^*(\wcalD_\gamma)_z\wedge o^*(\bS_{0,0}(\wf;\wL(B_2'),\wW(B_3)))_{(z,y)}\\
  =&\mu\, o(\wL_z)_z\wedge o^*(\wcalD_\gamma)_y+o^*(\wcalD_\gamma)_z\wedge o(\wL_y)_y
\end{split}\]
for some $\mu>0$. Then $(\mathrm{ev}_z\otimes 1)(o^*(\wcalD_\gamma)_z\wedge o^*(\bS_{0,0}(\wf;\wL(B_2'),\wW(B_3)))_{(z,y)})=\mu\, o^*(\wcalD_\gamma)_y$, where $\mathrm{ev}_z:\bigwedge^\bullet\, T_z^*\wL_z\to \R$, $\mathrm{ev}_z(\omega)=\omega(v)$, is the evaluation with the orientation framing $v\in \bigwedge^2\,T_z\wL_z$. This shows that the two coorientations induce consistent orientations. Hence the boundaries of $\bS_{r,0}(\wf;\rho)$ and $\bS_{r,0}(\wf;\rho')$ cancel with each other along their intersection. The relation (7) is the same as this case.
\end{proof}

\subsection{Blow-up along the diagonal}

\begin{Def}
Let $B\ell_{\bar{b}^{-1}(\widetilde{\Delta}_M)}(\bacalM_2(\wf)_\Z)$ be the blow-up of $\bacalM_2(\wf)_\Z$ along $\bar{b}^{-1}(\widetilde{\Delta}_{M})$, i.e., the union of the blow-ups of the smooth manifold strata. Let $Q(\wf)$ be the 4-dimensional $\Q(t)$-chain in $\bConf_2(\wM)_\Z$ represented by the natural map $B\ell_{\bar{b}^{-1}(\widetilde{\Delta}_M)}(\bacalM_2(\wf)_\Z)\to \bConf_2(\wM)_\Z$ induced by $\bar{b}$.
\end{Def}

We say that an AL-path $\gamma$ in $\wM$ is an {\it AL-cycle} if $\bar{b}(\gamma)\in\widetilde{\Delta}_M$. In other words, if the endpoints of $\gamma$ are $x$ and $y$, then $\gamma$ is an AL-cycle if moreover $\pi(x)=\pi(y)$. An AL-cycle $\gamma$ descends to a piecewise smooth map $\bar{\gamma}:S^1\to M$, which can be considered as a ``closed orbit'' in $M$. We will also call $\bar{\gamma}$ an AL-cycle. An AL-cycle has an orientation that is determined by the orientations of descending and ascending manifolds loci of $\widetilde{\xi}$. Then we define the sign $\ve(\gamma)\in\{-1,1\}$ and the period $p(\gamma)$ of $\gamma$ by the following equation
\[ [\bar{\gamma}]=\ve(\gamma)\,p(\gamma)[K] \]
in $H_1(M)$, where $K$ is a knot in $M$ such that $\langle[d\kappa],[K]\rangle=-1$ and $p(\gamma)$ is a positive integer. In other words, 
\[ p(\gamma)=|\langle[d\kappa],[\bar{\gamma}]\rangle|,\quad
\ve(\gamma)=-\frac{\langle[d\kappa],[\bar{\gamma}]\rangle}{|\langle[d\kappa],[\bar{\gamma}]\rangle|}. \]
Let $ST(\gamma)$ be the pullback $\bar{\gamma}^*ST(M)$, which can be considered as a piecewise smooth 3-dimensional chain in $\partial\bConf_2(\wM)_\Z$. We say that two AL-cycles $\gamma_1$ and $\gamma_2$ are {\it equivalent} if there is a degree 1 homeomorphism $g:S^1\to S^1$ such that $\bar{\gamma}_1\circ g=\bar{\gamma}_2$. The indices of vertical segments in an AL-cycle must be all equal since an AL-path is descending. We define the index $\mathrm{ind}\,\gamma$ of an AL-cycle $\gamma$ to be the index of a vertical segment in $\gamma$. 

Let $M_0=M\setminus\bigcup_{\gamma:\mathrm{critical\,locus}}\gamma$ and let $s_\xi:M_0\to ST(M_0)$ be the normalization $-\xi/\|\xi\|$ of the section $-\xi$. The closure $\overline{s_\xi(M_0)}$ in $ST(M)$ is a smooth manifold with boundary whose boundary is the disjoint union of circle bundles over the critical loci $\gamma$ of $\xi$, for a similar reason as \cite[Lemma~4.3]{Sh}. The fibers of the circle bundles are equators of the fibers of $ST(\gamma)$. Let $E^-_\gamma$ be the total space of the 2-disk bundle over $\gamma$ whose fibers are the lower hemispheres of the fibers of $ST(\gamma)$ which lie below the level surfaces of $\kappa$. Then $\partial\overline{s_\xi(M_0)}=\bigcup_\gamma \partial E_\gamma^-$ as sets. Let
\[ s_\xi^*(M)=\overline{s_\xi(M_0)}\cup \bigcup_\gamma E^-_\gamma\subset ST(M).\]
This is a 3-dimensional piecewise smooth manifold. We orient $s_\xi^*(M)$ by extending the natural orientation $(s_\xi^{-1})^*o(M)$ on $s_\xi(M_0)$ induced from the orientation $o(M)$ of $M$. The piecewise smooth projection $s_\xi^*(M)\to M$ is a homotopy equivalence and $s_\xi^*(M)$ is homotopic to $s_{\hat{\xi}}$. 

Let us fix an orientation of $ST(M)$ and $ST(\gamma)$. Recall that $B\ell_0(\R^3)$ can be identified with the closure in $S^2\times \R^3$ of the image of the section $s:\R^3\setminus\{0\}\to S^2\times (\R^3\setminus\{0\})$, $s(x)=(\frac{x}{\|x\|},x)$. Let $\mathrm{pr}_1:S^2\times (\R^3\setminus\{0\})\to S^2$ and $\mathrm{pr}_2:S^2\times(\R^3\setminus\{0\})\to \R^3\setminus\{0\}$ be the projections. Let $\omega$ be the closed 2-form on $\R^3\setminus\{0\}$ given by
\[ \omega(x)=\frac{1}{\|x\|^3}(x_1\,dx_2\wedge dx_3-x_2\,dx_1\wedge dx_3+x_3\,dx_1\wedge dx_2). \]
Then the pullback $\mathrm{pr}_1^*\omega$ agrees on $\mathrm{Im}\,s$ with $\mathrm{pr}_2^*\omega$. This shows that $\mathrm{pr}_1^*\omega$ can be smoothly extended over $B\ell_0(\R^3)$ by $\mathrm{pr}_2^*\omega$. Now we orient $ST(M)$ by the smooth extension of 
\[ \phi^*(\mathrm{pr}_1^*\omega\wedge o(\Delta_M)), \]
where $\phi$ is the trivialization in (\ref{eq:triv_normal}) and $\mathrm{pr}_1:\R^3\setminus\{0\}\times\Delta_M\to \R^3\setminus\{0\}$ is the projection. Here, the standard orientation $o(\Delta_M)$ of $\Delta_M$ is given as follows. If $T_x^*M$ is spanned by $e_1,e_2,e_3$ and if $o(M)_x=e_1\wedge e_2\wedge e_3$, then $T^*_{(x,x)}\Delta_M\subset T_x^*M\oplus T_x^*M$ is spanned by $\mathrm{pr}_1^*e_i+\mathrm{pr}_2^*e_i$, $i=1,2,3$, and we define $o(\Delta_M)_{(x,x)}=\bigwedge_{i=1}^3(\mathrm{pr}_1^*e_i+\mathrm{pr}_2^*e_i)$. It is easy to check that $\phi^*(\mathrm{pr}_1^*\omega\wedge o(\Delta_M))$ is equivalent to
\[ \iota(n)\,\mathrm{pr}_2^*o(M)\wedge \mathrm{pr}_1^*o(M), \]
where $n$ is a vector field on $N_{\Delta_M}\setminus\Delta_M$ that is outward normal with respect to $\Delta_M$. Similarly, we orient $ST(\gamma)$ for an AL-cycle $\gamma$ by
\[ \phi^*(\mathrm{pr}_1^*\omega\wedge o(\Delta_\gamma)), \]
where $o(\Delta_\gamma)=\mathrm{pr}_1^* o(\gamma)+\mathrm{pr}_2^* o(\gamma)$. 

For an AL-cycle $\gamma$, we denote by $\gamma^\irr$ the minimal AL-cycle such that $\gamma$ is equivalent to the iteration $(\gamma^\irr)^k$ for a positive integer $k$ and we call $\gamma^\irr$ the {\it irreducible} factor of $\gamma$. This is unique up to equivalence. 

\begin{Thm}\label{thm:1}
The boundary of the 4-dimensional $\Q(t)$-chain $Q(\wf)$ is given by
\[ \partial Q(\wf)=s_{\xi}^*(M)+\sum_\gamma (-1)^{\mathrm{ind}\,\gamma}\ve(\gamma)\,t^{p(\gamma)}\,ST(\gamma^\irr), \]
where the sum is taken over equivalence classes of AL-cycles in $\wM$.
\end{Thm}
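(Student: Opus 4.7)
The plan is to leverage Lemma~\ref{lem:bd_diag}, which says that $\partial P(\wf)$ is supported on the lift of $\Delta_M$. Consequently, $\partial Q(\wf)$ arises entirely from the blow-up of $\bar{b}^{-1}(\widetilde{\Delta}_M)\subset\bacalM_2(\wf)_\Z$ along $\widetilde{\Delta}_M$; my task is to identify the resulting 3-chain in $\partial\bConf_2(\wM)_\Z\cong \Z\times ST(M)/\Z$ together with its $\Lambda$-coefficients and orientation. A point of $\bar{b}^{-1}(\widetilde{\Delta}_M)$ is the lift of a piecewise smooth closed path in $M$, and splits into two disjoint types: (a) degenerate rank-$0$ paths, obtained as the $y\to x$ limit of a flow segment of $-\xi$, lying in the closure of $\bG_{00}$, and (b) honest AL-cycles of positive rank that enumerate the sum in Theorem~\ref{thm:1}.

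For case (a), the direction $(x-y)/\|x-y\|$ as $y$ approaches $x$ along a flow line of $-\xi$ tends to $-\xi/\|\xi\|$ wherever $\xi\neq 0$, so the blow-up of this locus produces the graph of $s_\xi$ over $M_0$. Near a critical locus $\gamma$ of $\xi$, I would use the parametrized Morse model from the proof of Lemma~\ref{lem:G_chain} (cf.~Figure~\ref{fig:corner_diag}) to identify the closure of the blow-up fiber with the lower-hemisphere disk bundle $E_\gamma^-$, matching the definition of $s_\xi^*(M)$ on the nose. The overall coefficient is $+1$: the orientation of $\bS_{0,0}$ fixed in \S\ref{ss:coori_S} was designed precisely to extend $(s_\xi^{-1})^*o(M)$, and the blow-up preserves this by the conventions of Remark~\ref{rem:oMM} and the definition of $o(ST(M))$ via $\phi^*(\mathrm{pr}_1^*\omega\wedge o(\Delta_M))$.

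For case (b), an equivalence class of AL-cycles $\gamma$ admits a one-parameter family of reparametrizations by base point, forming a circle diffeomorphic to $\bar\gamma(S^1)\subset M$; blowing up $\bar{b}^{-1}(\widetilde{\Delta}_M)$ replaces this circle by the unit-sphere bundle over $\bar\gamma^\irr(S^1)$, giving exactly the chain $ST(\gamma^\irr)$. Two computations pin down the coefficient. First, in $\wM\times_\Z\wM$ the two endpoints of $\gamma$ satisfy $y = t^{-\langle [d\kappa],[\bar\gamma]\rangle}x$, so passing to the $\Z$-quotient records the monomial $t^{p(\gamma)}$, with $\ve(\gamma)$ detecting the direction of wrapping relative to the reference knot $K$. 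Second, the coorientation of the positive-rank stratum at the blow-up face reads $o^*(\wcalA_\gamma)\wedge o^*(\wcalD_\gamma)$ by \S\ref{ss:coori_S}; matching this against the canonical fiber orientation of $ST(M)$ coming from $\omega$ produces the factor $(-1)^{\ind\gamma}$ via the formula $o^*(\wcalA_p)_x = (-1)^{\ind p}o(\calD_x)_x$ from \S\ref{ss:ori_DA}. Summing these local contributions over all equivalence classes of AL-cycles yields the second term of the claimed formula.

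The principal obstacle is the sign and multiplicity bookkeeping in case (b). One must verify that the orientations induced on the exceptional $S^2$-fibers from the various strata $\bS_{r,0}(\wf;\rho)$ for possibly overlapping descending routes $\rho$ all agree with the orientation of $ST(M)$ defined via $\phi^*(\mathrm{pr}_1^*\omega\wedge o(\Delta_M))$, and that the identification of the reparametrization circle with $\bar\gamma^\irr(S^1)$ (rather than with its $k$-fold cover for $\gamma=(\gamma^\irr)^k$) is consistent with the $t$-exponent $p(\gamma)$ that already accounts for the wrapping. Once this is established, the internal boundary cancellations furnished by Lemma~\ref{lem:bd_diag} persist after blow-up, and only the two contributions above survive.
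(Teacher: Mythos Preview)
Your outline matches the paper's proof: invoke Lemma~\ref{lem:bd_diag} to localize $\partial Q(\wf)$ to the blow-up faces over $\widetilde\Delta_M$, then split into the degenerate rank-$0$ locus (giving $s_\xi^*(M)$, the paper's Lemma~\ref{lem:bQ_diag}) and the genuine AL-cycle locus (giving the sum, the paper's Lemma~\ref{lem:bQ_cycle}). The paper carries out (b) by explicit local-coordinate computations in the parametrized Morse model for each index separately, rather than by your conceptual coorientation argument, but the strategy is the same and your identification of the multiplicity issue with $\gamma^\irr$ is exactly what the last paragraph of the proof of Lemma~\ref{lem:bQ_cycle} addresses.

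One point needs correction. Your reading of $\ve(\gamma)$ as ``detecting the direction of wrapping relative to $K$'' is not right: every AL-cycle is descending with respect to $\widetilde\kappa$, so all of them wrap in the same direction as $K$. The sign $\ve(\gamma)$ is defined by comparing $[\bar\gamma]$ with $p(\gamma)[K]$, but the orientation on $\bar\gamma$ is the one \emph{induced from the Morse data} (the orientations of $\wcalD$ and $\wcalA$), not the direction of traversal. Concretely, $\ve(\gamma)$ is the product of the $1/1$-intersection signs $\ve_i$ along the horizontal segments of $\gamma$, in the sense of Example~\ref{exa:1} and \S\ref{ss:sign_AL}. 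This is precisely why Lemma~\ref{lem:bQ_cycle} is stated only for AL-cycles \emph{without} horizontal segments: for those, $\ve(\gamma)=1$ automatically and the local computation isolates the $(-1)^{\mathrm{ind}\,\gamma}$ factor cleanly; for a general AL-cycle the coorientation collapses (as in Example~\ref{exa:1}) to $\ve(\gamma)\cdot o^*(\wcalA)_x\wedge o^*(\wcalD)_y$, reducing the sign question to the vertical case already handled.
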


\begin{Lem}\label{lem:bQ_cycle}
The face of $\partial Q(\wf)$ at an AL-cycle $\gamma$ without horizontal segments contributes as $(-1)^{\mathrm{ind}\,\gamma}t^{p(\gamma)}ST(\gamma^\irr)$.
\end{Lem}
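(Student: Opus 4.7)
The AL-cycle $\gamma$ without horizontal segments is a single descending vertical segment traversing a critical locus lift $\widetilde p\subset\wM$ from $x_0$ to $y_0=t^{p(\gamma)}x_0$; set $k=\mathrm{ind}\,\gamma$. The plan is (i) to locate $\gamma$ in a $4$-dimensional rank-one stratum of $\bacalM_2(\wf)$ and identify the $1$-dimensional piece of $\bar b^{-1}(\widetilde\Delta_M)$ that gets blown up, (ii) to verify that $\bar b$ meets $\widetilde\Delta_M$ transversally so the blow-up face is an $S^2$-bundle diffeomorphic to $ST(\gamma^{\irr})$ in the sheet $t^{p(\gamma)}ST(M)$, and (iii) to extract the sign $(-1)^{\mathrm{ind}\,\gamma}$ by tracking the orientation conventions of \S\ref{s:coori}.

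For (i), by \S\ref{s:mod_AL} the relevant stratum near $\gamma$ is $\bS_{1,0}(\wf;\rho)$ for a descending route $\rho$ whose cells cover the vertical segment; locally it embeds into $[(\wcalA_{\widetilde p}\cap\wW(B(1)))\times(\wcalD_{\widetilde p}\cap\wW(B(r)))]_{\geq}$, a $4$-manifold since $\dim\wcalA_{\widetilde p}=3-k$ and $\dim\wcalD_{\widetilde p}=k+1$. The parametrized Morse--Smale condition together with the $t^{p(\gamma^{\irr})}$-invariance of $\wcalA_{\widetilde p}$ and $\wcalD_{\widetilde p}$ implies that if $(x,t^m x)$ lies in this stratum then $t^{-m}\widetilde p=\widetilde p$, i.e.\ $m\in p(\gamma^{\irr})\Z_{\geq 0}$, and then $\wcalA_{\widetilde p}\cap\wcalD_{\widetilde p}=\widetilde p$ forces $x\in\widetilde p$. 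The connected component with $m=p(\gamma)$ gives the $1$-manifold $\mathcal{C}=\{(x,t^{p(\gamma)}x):x\in\widetilde p\}$, which projects via the $\Z$-action onto $p\subset M\cong\widetilde\Delta_M^{(p(\gamma))}$.

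For (ii), in parametrized Morse coordinates $(u_1,u_2,u_3)$ at $x_0$ with $\widetilde p$ the $u_1$-axis and $\wcalA_{\widetilde p},\wcalD_{\widetilde p}$ complementary coordinate $2$-planes, together with corresponding coordinates at $y_0$ matched by $t^{p(\gamma)}$, a direct linear algebra check yields $T_{(x_0,y_0)}\bS_{1,0}+T_{(x_0,y_0)}\widetilde\Delta_M^{(p(\gamma))}=T_{(x_0,y_0)}(\wM\times_\Z\wM)$. Hence $\bar b$ is transversal to $\widetilde\Delta_M^{(p(\gamma))}$ and $d\bar b$ induces a linear isomorphism between the two $3$-dimensional normal bundles. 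Therefore $B\ell_{\bar b^{-1}(\widetilde\Delta_M)}(\bacalM_2(\wf)_\Z)$ acquires an $S^2$-bundle boundary face over $\mathcal{C}/\Z\cong p$, and $\bar b$ extends to a diffeomorphism from this face onto $ST(M)|_p$ sitting in the sheet $t^{p(\gamma)}ST(M)$ of $\partial\bConf_2(\wM)_\Z$; since $\bar\gamma^{\irr}:S^1\to p$ has degree one, this face is identified with $t^{p(\gamma)}ST(\gamma^{\irr})$, with a map of degree $\pm 1$ on each $S^2$ fiber. The multiplier $t^{p(\gamma)}$ simply records the sheet of $\widetilde\Delta_M$ that contains the image of $\mathcal{C}$.

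For (iii), \S\ref{ss:ori_DA} gives $o^*(\wcalA_{\widetilde p})_{x_0}=(-1)^{k}\,o(\calD_{x_0})_{x_0}$ and $o^*(\wcalD_{\widetilde p})_{y_0}=o(\calA_{y_0})_{y_0}$, so by \S\ref{ss:coori_S} the coorientation $o^*(\bS_{1,0}(\wf;\rho))_{(x_0,y_0)}$ carries a factor $(-1)^{k}$. Converting to an orientation of $\bS_{1,0}$ via Remark~\ref{rem:oMM}, applying the inward-normal-first convention to the blow-up face at $r=0$, and comparing with the orientation of $ST(M)$ specified by $\phi^*(\mathrm{pr}_1^*\omega\wedge o(\Delta_M))$, the induced map on $S^2$ fibers has degree $(-1)^{k}$. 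Noting that $\ve(\gamma)=1$ since a descending vertical cycle satisfies $\langle[d\kappa],[\bar\gamma]\rangle=-p(\gamma)$, the face contributes $(-1)^{\mathrm{ind}\,\gamma}\,t^{p(\gamma)}\,ST(\gamma^{\irr})$ as asserted. The principal obstacle is this last sign computation: one must compose the four orientation conventions (parametrized Morse chart for $\wcalA,\wcalD$; product for $\wM\times\wM$; framing-independent form for $ST(M)$; inward-normal-first for the blow-up boundary), and verify that it is precisely the factor $(-1)^{\mathrm{ind}\,p}$ appearing in $o^*(\wcalA_p)$ that survives in the final degree.
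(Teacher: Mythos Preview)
Your overall strategy matches the paper's: locate the AL-cycle inside the rank-one stratum $\bS_{1,0}(\wf;\rho)=\wcalA_{\widetilde p}\times t^{p(\gamma)}\wcalD_{\widetilde p}$, identify the face created by blowing up along the $1$-dimensional locus over $\widetilde\Delta_M^{(p(\gamma))}$ with $ST(\gamma^{\irr})$, and then extract the sign. Steps (i) and (ii) are essentially what the paper does (though the paper handles the $\gamma^{\irr}$ issue via an explicit covering-space argument on $\R\times_\Z\R$ rather than your quotient description $\mathcal C/\Z\cong p$; both are fine).

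The gap is in step (iii). You correctly isolate the source of the sign as the factor $(-1)^{\mathrm{ind}\,p}$ in $o^*(\wcalA_p)=(-1)^{\mathrm{ind}\,p}o(\calD_x)$, but you then merely assert that this factor ``survives'' the composition of the four orientation conventions, and you yourself flag this as the ``principal obstacle.'' That assertion is the entire content of the lemma, and it is not obvious: several signs interact along the way. For instance, in the paper's index-$1$ computation the coorientation works out to $+dx_2\wedge dx_3'$ (the $\alpha^2$ cancels the $(-1)^1$), while in index $2$ it is $-dx_2\wedge dx_3$; the outward-normal contractions and the restrictions of $\phi^*\omega$ to the fiber $F$ also differ between the two cases. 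So one cannot simply track a single $(-1)^k$ through the pipeline.

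The paper resolves this by doing the computation explicitly in parametrized Morse coordinates, separately for $\mathrm{ind}\,\gamma=1$ and $\mathrm{ind}\,\gamma=2$ (and noting index $0$ is identical to index $2$): write down $o(\widetilde A\times\widetilde D)$ from the coorientation via Remark~\ref{rem:oMM}, contract with the explicit outward normal $(-x_1',0,x_3,x_1',x_2',0)$ (resp.\ $(-x_1',0,0,x_1',x_2',x_3')$), and compare directly with $\phi^*\omega|_F\wedge o(\Delta_\gamma)$. In index $1$ the two orientations are opposite, in index $2$ they agree, yielding $(-1)^{\mathrm{ind}\,\gamma}$. To complete your proof you need to carry out this coordinate computation (or an equivalent one) rather than asserting its outcome.
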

\begin{proof}
Suppose that $\gamma$ is an AL-cycle from $x_0$ to $t^{i}x_0$, where $i$ is the period of $\gamma$. Let $\rho=(B(1),B(2),\ldots,B(N))$ be a descending route with one block such that for every $i$, $\wW(B(i))$ intersects $\gamma$ and $\wW(B(N))=t^{i}\wW(B(1))$. Then it follows that $X_\rho=\wW(B(1))\times\wW(B(N))=\wW(B(1))\times t^{i}\wW(B(1))\approx \wW(B(1))\times\wW(B(1))$.

First, we consider the case where $\mathrm{ind}\,\gamma=1$. Let $x_0$, $U_{x_0}$, $\widetilde{A}$, $\widetilde{D}$ be as in the first half ($\mathrm{ind}\,\gamma=1$ case) of the proof of Lemma~\ref{lem:coori_consistent}. Then
\[ \begin{split}
  \bS_{1,0}(\wf;\rho)\cap (U_{x_0}\times t^{i}U_{x_0})
  =\widetilde{A}\times t^{i}\widetilde{D}
\end{split} \]
if $i\geq 1$. By convention of \S\ref{ss:ori_DA}, the coorientation of $\widetilde{A}\times \widetilde{D}$ is given by
\[ o^*(\widetilde{A})_x\wedge o^*(\widetilde{D})_{x'}=dx_2\wedge dx_3'. \]
By Remark~\ref{rem:oMM}, this gives 
\[ o(\widetilde{A}\times\widetilde{D})_{(x,x')}
=dx_1\wedge dx_3\wedge dx_1'\wedge dx_2'. \]
The outward normal vector field to $\Delta_\gamma$ in $(\widetilde{A}\times\widetilde{D})\setminus\Delta_\gamma$ is given by $(-x_1',0,x_3,x_1',x_2',0)$. So the induced orientation at the boundary of the blow-up is given by the formula
\begin{equation}\label{eq:o(STgamma)}
 \begin{split}
  &\iota(-x_1',0,x_3,x_1',x_2',0)\,dx_1\wedge dx_3\wedge dx_1'\wedge dx_2'\\
&=-(dx_1+dx_1')\wedge (x_1'\,dx_2'\wedge dx_3 -x_2'\,dx_1'\wedge dx_3 +x_3\,dx_1'\wedge dx_2').
\end{split}
\end{equation}
Let $\phi:(\widetilde{A}\times \widetilde{D})\setminus\Delta_{U_{x_0}}\to \R^3\setminus\{0\}$ be the map given by
\[ \phi(x_1,0,x_3,x_1',x_2',0)
=(x_1'-x_1,x_2',-x_3). \]
Then 
\[ \phi^*\omega(x,x')=\frac{1}{\|\phi(x,x')\|^3}
\Bigl(
-(x_1'-x_1)\,dx_2'\wedge dx_3 + x_2'\, (dx_1'-dx_1)\wedge dx_3 
-x_3\,(dx_1'-dx_1)\wedge dx_2'
\Bigr).\]
We consider the induced 2-form on the fiber $F$ of $N_{\Delta_M}\setminus \Delta_M$. So we may impose the relation $x_1=-x_1'$. Then the form $\phi^*\omega$ induces
\[ \phi^*\omega|_F(x,x')=\frac{1}{\|\phi(x,x')\|^3}
(
-2x_1'\,dx_2'\wedge dx_3 + 2x_2'\, dx_1'\wedge dx_3 
-2x_3\,dx_1'\wedge dx_2'
).
\]
Since the orientation of $\Delta_\gamma$ is given by $-(dx_1'+dx_1)$, the orientation of $ST(\gamma)$ is given by
\[ \frac{2}{\|\phi(x,x')\|^3}(dx_1'+dx_1)\wedge(
x_1'\,dx_2'\wedge dx_3 -x_2'\, dx_1'\wedge dx_3 
+x_3\,dx_1'\wedge dx_2'
).
\]
This is opposite to (\ref{eq:o(STgamma)}). Hence the smooth extensions of these forms to the boundary give opposite orientations. 

Next, we consider the case where $\mathrm{ind}\,\gamma=2$. Let $x_0$, $U_{x_0}$, $\widetilde{A}$, $\widetilde{D}$ be as in the last half (case $\mathrm{ind}\,\gamma=2$) of the proof of Lemma~\ref{lem:coori_consistent}. Then
\[ \begin{split}
  \bS_{1,0}(\wf;\rho)\cap (U_{x_0}\times t^{i}U_{x_0})
  =\widetilde{A}\times t^{i}\widetilde{D}
\end{split} \]
if $i\geq 1$. By convention of \S\ref{ss:ori_DA}, the coorientation of $\widetilde{A}\times \widetilde{D}$ is given by
\[ o^*(\widetilde{A})_x\wedge o^*(\widetilde{D})_{x'}=-dx_2\wedge dx_3. \]
By Remark~\ref{rem:oMM}, this gives 
\[ o(\widetilde{A}\times\widetilde{D})_{(x,x')}
=dx_1\wedge dx_1'\wedge dx_2'\wedge dx_3'. \]
The induced orientation at the boundary is given by
\begin{equation}\label{eq:o(STgamma2)}
 \begin{split}
  &\iota(-x_1',0,0,x_1',x_2',x_3')\,dx_1\wedge dx_1'\wedge dx_2'\wedge dx_3'\\
&=-(dx_1+dx_1')\wedge (x_1'\,dx_2'\wedge dx_3' -x_2'\,dx_1'\wedge dx_3' +x_3'\,dx_1'\wedge dx_2')
\end{split}
\end{equation}
Let $\phi:(\widetilde{A}\times \widetilde{D})\setminus\Delta_{U_{x_0}}\to \R^3\setminus\{0\}$ be the map given by
\[ \phi(x_1,0,0,x_1',x_2',x_3')
=(x_1'-x_1,x_2',x_3'). \]
Then 
\[ \phi^*\omega(x,x')=\frac{1}{\|\phi(x,x')\|^3}
\Bigl(
(x_1'-x_1)\,dx_2'\wedge dx_3' - x_2'\, (dx_1'-dx_1)\wedge dx_3' 
+x_3'\,(dx_1'-dx_1)\wedge dx_2'
\Bigr).\]
We consider the induced 2-form on the fiber $F$ of $N_{\Delta_M}$. So we may impose the relation $x_1=-x_1'$. Then the form $\phi^*\omega$ induces
\[ \phi^*\omega|_F(x,x')=\frac{1}{\|\phi(x,x')\|^3}
\Bigl(
2x_1'\,dx_2'\wedge dx_3' - 2x_2'\, dx_1'\wedge dx_3' 
+2x_3'\,dx_1'\wedge dx_2'
\Bigr).
\]
Since the orientation of $\Delta_\gamma$ is given by $-(dx_1'+dx_1)$, the orientation of $ST(\gamma)$ is given by
\[ -\frac{2}{\|\phi(x,x')\|^3}
(dx_1'+dx_1)\wedge(
x_1'\,dx_2'\wedge dx_3' - x_2'\, dx_1'\wedge dx_3' 
+x_3'\,dx_1'\wedge dx_2'
).\]
This is equivalent to (\ref{eq:o(STgamma2)}). Hence the smooth extensions of these forms to the boundary give equivalent orientations. 

The case where $\mathrm{ind}\,\gamma=0$ is the same as the case where $\mathrm{ind}\,\gamma=2$. 

The reason that we must consider the irreducible factor $\gamma^\irr$ would be clear if one considers a $p$-fold covering $c:S^1\to S^1$. A choice of base point $v\in S^1$ gives a lift $\widetilde{c}_v:[0,1]\to \R$ of $c$ in the universal covering $\pi:\R\to S^1$. The endpoints of $\widetilde{c}_v$ defines a point $v_0\times_\Z v_1$ in $\R\times_\Z\R$, which is defined analogously to $\wM\times_\Z\wM$. Conversely, the set of points $v_0\times_\Z v_1\in \R\times_\Z\R$ such that $\pi(v_0)=\pi(v_1)$ and $v_1=t^p v_0(=v_0-p)$ is of the form $t^p\, \widetilde{c}_v([0,\frac{1}{p}])$, which can be written as $t^p\,c^\irr$. This explains the reason for the term $t^{p(\gamma)}ST(\gamma^\irr)$.  
\end{proof}

\begin{Lem}\label{lem:bQ_diag}
The face of $\partial Q(\wf)$ at $ST(M)$ contributes as $s_\xi^*(M)$. 
\end{Lem}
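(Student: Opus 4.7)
The $ST(M)$ face of $\partial Q(\wf)$ is by construction the image in $\bConf_2(\wM)_\Z$ of the exceptional divisor produced when blowing up $\bar b^{-1}(\widetilde{\Delta}_M)$ along its $i=0$ component (i.e., the diagonal pairs $[x,x]_\Z$). The plan is to stratify this component of $\bar b^{-1}(\widetilde{\Delta}_M)$ by whether the base point in $M$ lies on a critical locus of $\xi$ or not, and to identify the two resulting strata of the blow-up face with $\overline{s_\xi(M_0)}$ and $\bigcup_\gamma E^-_\gamma$ respectively.

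Outside the critical loci, the only AL-paths whose endpoints both approach a given diagonal point $(x,x)$ are short rank-$0$ paths. By the proof of Lemma~\ref{lem:M2WW}, a neighborhood of the diagonal in $\bS_{0,0}(\wf)$ is parametrized by $(x,s)\mapsto (x,\Phi^{-s}_{\widetilde{\xi}}(x))$ for small $s\geq 0$, and blowing up at $s=0$ replaces the collapsed point with the direction $-\widetilde{\xi}(x)/\|\widetilde{\xi}(x)\|\in ST_x M$. As $x$ varies over $M_0$, this recovers $s_\xi(M_0)$, whose closure is the open piece $\overline{s_\xi(M_0)}$ of $s_\xi^*(M)$.

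Near a critical locus $\gamma$, the rank-$1$ stratum $\bS_{1,0}(\wf)$ also contributes. Using the parametrized Morse coordinates of Lemma~\ref{lem:coori_consistent}, the intersection of $\bS_{1,0}(\wf)$ with the $i=0$ portion of $\bar b^{-1}(\widetilde{\Delta}_M)$ is the diagonal of $\gamma$, and the exceptional fiber at $x_0\in\gamma$ is the image of the unit normal map $(a,d)\mapsto (d-a)/\|d-a\|$ defined on a neighborhood of $(x_0,x_0)$ in $(\wcalA_\gamma\times\wcalD_\gamma)\cap\{\widetilde{\kappa}(a)\geq\widetilde{\kappa}(d)\}$. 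The descending constraint $\widetilde{\kappa}(a)\geq\widetilde{\kappa}(d)$ forces the first coordinate of $d-a$ to be non-positive, so this image is exactly the closed lower hemisphere of $ST_{x_0}M$, i.e., the fiber of $E^-_\gamma$ at $x_0$. Assembling these disks over $\gamma$ gives $E^-_\gamma$, and the rank-$0$ contributions on a punctured neighborhood of $\gamma$ extend continuously to $\partial E^-_\gamma$, so the two strata together produce the piecewise smooth manifold $s_\xi^*(M)$.

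The main obstacle is the orientation check. Adapting the local computation from Lemma~\ref{lem:bQ_cycle} to the case $i=0$, where $\wcalA_\gamma$ and $\wcalD_\gamma$ intersect along $\gamma$ itself rather than being shifted apart by $t^i$, the outward normal to the lift of the diagonal in $\wcalA_\gamma\times\wcalD_\gamma$ takes a different form, and a direct contraction with the coorientations from \S\ref{ss:coori_S} and \S\ref{ss:ori_DA} yields on $E^-_\gamma$ a form equivalent to the defining $\phi^*(\mathrm{pr}_1^*\omega\wedge o(\Delta_\gamma))$ restricted to the lower hemisphere, matching the orientation that $s_\xi^*(M)$ inherits from the extension of $(s_\xi^{-1})^*o(M)$. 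On the rank-$0$ piece, the coorientation $*\,o(S_{0,0})$ from \S\ref{ss:coori_S} similarly induces $(s_\xi^{-1})^*o(M)$ on $s_\xi(M_0)$ under the blow-up, and the agreement across the equator $\partial E^-_\gamma$ is the same compatibility proved in Lemma~\ref{lem:coori_consistent}. Together these matchings identify the $ST(M)$ face of $\partial Q(\wf)$ with $s_\xi^*(M)$ as an oriented chain, with coefficient $+1$ on every critical locus (regardless of $\mathrm{ind}\,\gamma$), which is why no sign $(-1)^{\mathrm{ind}\,\gamma}$ appears on this face in Theorem~\ref{thm:1}.
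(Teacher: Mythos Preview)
Your proof is correct, but it does considerably more work than the paper's own argument. The paper's proof of Lemma~\ref{lem:bQ_diag} is only a few lines: it takes the set-theoretic identification of the $ST(M)$ face with $s_\xi^*(M)$ as essentially clear from the construction (indeed, $s_\xi^*(M)$ was defined precisely so that this would hold), and the entire proof consists of checking the induced orientation at a generic point $(x,x')$ away from the critical loci. Using the formula $o(Q(\wf))_{(x,x')}=(-d\wf)_{x'}\wedge o(\Delta_{\wM})_{(x,x)}+O(d(x,x'))$ from \S\ref{ss:coori_S} and contracting with the outward normal $-\widetilde{\xi}_{x'}$, the paper obtains $o(\Delta_{\wM})_{(x,x)}$, which matches the orientation of $s_\xi^*(M)$; that is the whole argument.

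Your approach instead carries out the set-theoretic identification explicitly, stratifying by whether the base point lies on a critical locus and matching the two strata with $\overline{s_\xi(M_0)}$ and $\bigcup_\gamma E^-_\gamma$. This is a genuine addition: you explain \emph{why} the rank-$1$ stratum, blown up along $\Delta_\gamma$, produces exactly the lower hemisphere (via the constraint $\widetilde{\kappa}(a)\geq\widetilde{\kappa}(d)$), which the paper leaves implicit in its definition of $E^-_\gamma$. On the other hand, your orientation paragraph is less concrete than the paper's: you gesture at adapting Lemma~\ref{lem:bQ_cycle} and invoking the coorientation conventions, whereas the paper writes out the contraction explicitly at a generic point. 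So the two proofs are complementary---yours is more transparent about the geometry of the face, the paper's is more explicit about the sign.
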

\begin{proof}
It suffices to check that the induced orientation on $\partial Q(\wf)\cap ST(M)$ is equivalent to the standard one on $s_\xi^*(M)$. So we consider a pair $(x,x')\in \wM\times \wM$ of points both not close to any critical loci but close to each other. By convention of \S\ref{ss:coori_S}, the orientation of $Q(\wf)$ at $(x,x')$ is given by
\[ o(Q(\wf))_{(x,x')}=(-d\wf)_{x'}\wedge o(\Delta_{\wM})_{(x,x)}+O(d(x,x')), \]
where $d(x,x')$ is the geodesic distance on $\wM$. 
The outward normal vector field in $Q(\wf)$ near $\bar{b}^{-1}(\Delta_{\wM})$ is given by $-\widetilde{\xi}_{x'}$. Thus the induced orientation on the boundary is
\[ \iota(-\widetilde{\xi}_{x'})\,(-d\wf)_{x'}\wedge o(\Delta_{\wM})_{(x,x)}=o(\Delta_{\wM})_{(x,x)}. \]
This is equivalent to the orientation of $s_\xi^*(M)$. 
\end{proof}

\begin{proof}[Proof of Theorem~\ref{thm:1}]
By Lemma~\ref{lem:bd_diag}, the boundary of $Q(\wf)$ concentrates on $\partial \bConf_2(\wM)_\Z$. The boundary of $\bConf_2(\wM)_\Z$ consists of faces made by the blow-up along $\bar{b}^{-1}(\widetilde{\Delta}_M)$. By Lemma~\ref{lem:bQ_diag}, the face made by the blow-up along $\bar{b}^{-1}(\Delta_{\wM})$ contributes as $s_\xi^*(M)$. The other faces correspond to the tangent sphere bundle over AL-cycles. Then Lemma~\ref{lem:bQ_cycle} finishes the proof.
\end{proof}

\subsection{Relation with the Lefschetz zeta function}

We shall see that the homology class of the term $\sum_\gamma(-1)^{\mathrm{ind}\,\gamma}\ve(\gamma)t^{p(\gamma)}ST(\gamma^\irr)$ in the formula of Theorem~\ref{thm:main} can be rewritten in terms of the Lefschetz zeta function.

\begin{Prop}\label{prop:gamma_zeta}
Consider $M$ as the mapping torus of an orientation preserving diffeomorphism $\varphi:\Sigma\to \Sigma$, where $\Sigma=\kappa^{-1}(0)$. Then the following identity holds.
\begin{equation}\label{eq:gamma_zeta1}
 \sum_\gamma(-1)^{\mathrm{ind}\,\gamma}\ve(\gamma)\,p(\gamma^\irr)\,t^{p(\gamma)}
=\frac{t\zeta'_\varphi}{\zeta_\varphi}, 
\end{equation}
where the sum is taken over equivalence classes of AL-cycles in $\wM$, or equivalently,
\[ \exp\left(\sum_\gamma(-1)^{\mathrm{ind}\,\gamma}\frac{\ve(\gamma)\,p(\gamma^\irr)}{p(\gamma)}t^{p(\gamma)}\right)=\zeta_\varphi. \]
\end{Prop}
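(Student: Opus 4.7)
The identity to be proved is
\[ \sum_\gamma (-1)^{\mathrm{ind}\,\gamma}\,\ve(\gamma)\,p(\gamma^\irr)\,t^{p(\gamma)} = \frac{t\zeta_\varphi'(t)}{\zeta_\varphi(t)}. \]
My first move is to simplify the sign $\ve(\gamma)$. Every AL-path is descending with respect to $\widetilde\kappa$, so for an AL-cycle $\gamma$ of period $p(\gamma)$ the projection $\bar\gamma\colon S^1\to M$ satisfies $\langle[d\kappa],[\bar\gamma]\rangle = -p(\gamma)$. Together with the normalization $\langle[d\kappa],[K]\rangle = -1$, the defining equation $[\bar\gamma] = \ve(\gamma)\,p(\gamma)\,[K]$ forces $\ve(\gamma) = 1$ uniformly. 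Next, the product formula (\ref{eq:zeta_prod}) combined with (\ref{eq:dlog_zeta}) gives $t\zeta_\varphi'/\zeta_\varphi = \sum_{k\geq 1} L(\varphi^k) t^k$. So the proposition reduces to the coefficientwise statement
\[ \sum_{\gamma:\,p(\gamma) = k} (-1)^{\mathrm{ind}\,\gamma}\,p(\gamma^\irr) = L(\varphi^k), \qquad k\geq 1. \]

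Second, since all vertical segments of a single AL-cycle share an index (as noted in the text), I split this sum as $\sum_{i=0}^{2}(-1)^i N_i(k)$, where $N_i(k) := \sum_{\gamma:\,\mathrm{ind}\,\gamma=i,\,p(\gamma)=k} p(\gamma^\irr)$. For each $i$ I introduce the transition operator $T_i$ on the free $\Q$-module $\Q\langle \mathrm{Crit}_i(f_0)\rangle$ by letting $(T_i)_{pq}$ be the signed count, with the convention of \S\ref{ss:sign_AL}, of AL-paths in $\wM$ from $p\in\Sigma_0=\kappa^{-1}(0)$ to $tq\in\Sigma_{-1}$. Iterating, $(T_i^k)_{pq}$ counts AL-paths from $p$ to $t^k q$ with signs, so $\mathrm{tr}(T_i^k)=\sum_p(T_i^k)_{pp}$. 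A transversality argument for a generic reference fiber shows that any AL-cycle of index $i$ and period $k$ meets $\Sigma_0$ transversally in exactly $k$ points, all of which must lie on vertical segments and hence at index-$i$ critical points of $f_0$; for the $n$-th iterate of a minimal AL-cycle $\gamma_0$ of period $m$ (so $k=nm$) these $k$ parametric intersections collapse to $m=p(\gamma^\irr)$ distinct underlying points on $\Sigma_0$, each producing exactly one AL-path to its $t^k$-translate. This yields $\mathrm{tr}(T_i^k) = N_i(k)$.

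Third, I identify $\sum_i(-1)^i\mathrm{tr}(T_i^k)$ with $L(\varphi^k)$ by realizing $T = \bigoplus T_i$ as a Cerf-type continuation chain map on the Morse complex $C_*(f_0)$. Using the decomposition $S^1 = \bigcup I_j$ of \S\ref{ss:M2(f)}: on each even interval $I_{2j}$ the continuation is parallel transport of critical loci; on each odd interval $I_{2j-1}$ a level exchange either has no effect on the complex or, in the presence of a $1/1$-intersection between $p$ and $q$, contributes a handle-slide transvection whose sign agrees with the sign $\ve_j$ attached to AL-paths in \S\ref{ss:sign_AL}. Composing the $2r$ transitions around $S^1$ and using the gluing relation $f_1 = f_0\circ\varphi$ identifies $T$ with a chain map of $C_*(f_0)$ representing $\varphi_*^{-1}$ on $H_*(\Sigma)$. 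Hopf's trace theorem and the Lefschetz trace formula then give
\[ \sum_i (-1)^i \mathrm{tr}(T_i^k) = \sum_i (-1)^i \mathrm{tr}(\varphi^{-k}_{*i}) = L(\varphi^{-k}) = L(\varphi^k), \]
the last equality coming from $\zeta_\varphi = \zeta_{\varphi^{-1}}$ for orientation-preserving $\varphi$, which follows from Poincar\'e duality on $\Sigma$ (equivalently, the symplectic symmetry of $\varphi_{*1}$). Combining the three stages proves the proposition.

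\textbf{Main obstacle.} The delicate step is the third: showing that the combinatorial operator $T_i$, whose entries are signed counts of AL-paths via \S\ref{ss:sign_AL}, agrees on the nose with a Cerf continuation chain map on $C_*(f_0)$. In particular, at each $1/1$-intersection between index-1 critical points $p,q$ one must verify that the sign $\ve_j$ coming from $o^*(\wcalA_p)\wedge o^*(\wcalD_q)$ (and the conventions of \S\ref{ss:ori_DA}) coincides with the sign of the corresponding handle-slide transvection in the continuation map; this requires a local model computation at the moment of level exchange. Once the sign match is established, the remainder is the standard trace-formula bookkeeping.
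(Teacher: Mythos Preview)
Your overall architecture matches the paper's: introduce a transfer matrix counting AL-paths from one copy of $\Sigma$ to the next, relate its trace powers to counts of AL-cycles, and identify it with the monodromy action on the Morse complex. The paper carries this out by defining $\Theta(\xi)$ on $C_1(\Sigma)$ (your $T_1$), proving directly that $\Theta(\xi)=\varphi_{\sharp 1}$ via a local handle-slide computation (its Lemma~\ref{lem:phi=Theta}), and then invoking $\sum_i(-1)^i\mathrm{Tr}\,\varphi_{\sharp i}^k=\sum_i(-1)^i\mathrm{Tr}\,\varphi_{*i}^k$. Your route through $L(\varphi^k)$, Hopf trace, and the Poincar\'e-duality identity $L(\varphi^{-k})=L(\varphi^k)$ is a legitimate variant that avoids pinning down whether $T$ realizes $\varphi_\sharp$ or $\varphi_\sharp^{-1}$.

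There is, however, a genuine gap in your first step. You argue that $\ve(\gamma)=1$ because AL-paths are descending for $\widetilde\kappa$. But $\ve(\gamma)$ is defined via $[\bar\gamma]=\ve(\gamma)p(\gamma)[K]$ where the orientation of $\bar\gamma$ is \emph{not} the parametric one: the text specifies that ``an AL-cycle has an orientation that is determined by the orientations of descending and ascending manifold loci of $\widetilde\xi$.'' For an index-$1$ AL-cycle this orientation picks up the product of the $1/1$-intersection signs $\ve_j$ from \S\ref{ss:sign_AL}, and that product need not be $+1$. The paper's own proof makes this explicit: it asserts $\ve(\gamma)=1$ only for $i=0,2$, and for $i=1$ it keeps $\ve(\gamma)$ on the left of the key identity $\sum_{\mathrm{ind}\,\gamma=1}\ve(\gamma)p(\gamma^\irr)t^{p(\gamma)}=\sum_k t^k\mathrm{Tr}\,\Theta(\xi)^k$, precisely because $\Theta(\xi)$ has signed entries. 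With your simplification, your $N_1(k)$ becomes an unsigned count while $\mathrm{tr}(T_1^k)$ is signed, so your claimed equality $\mathrm{tr}(T_i^k)=N_i(k)$ fails for $i=1$. The fix is simple: retain $\ve(\gamma)$ in the definition of $N_i(k)$; then the rest of your argument goes through. (A small side remark: in the paper's decomposition the $1/1$-intersections occur on the \emph{even} intervals $I_{2j}$, with level exchanges on the odd ones; your description swaps these.)
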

\begin{proof}
Here, we assume that all the homology groups are considered with coefficients in $\Q$. The restriction of the fiberwise gradient $\xi$ to $\Sigma$ defines a handle filtration $\emptyset=\Sigma^{(-1)}\subset\Sigma^{(0)}\subset \Sigma^{(1)}\subset \Sigma^{(2)}=\Sigma$. Put $C_i(\Sigma)=H_i(\Sigma^{(i)},\Sigma^{(i-1)})$. Then $\varphi$ induces endomorphisms $\varphi_{\sharp i}:C_i(\Sigma)\to C_i(\Sigma)$ and $\varphi_{*i}:H_i(\Sigma)\to H_i(\Sigma)$. Note that $\varphi_{\sharp i}$ is uniquely determined for $\xi$ because $\varphi_{\sharp 0}$ and $\varphi_{\sharp 2}$ are induced by the permutation given by the graphic. Then $\varphi_{\sharp 1}$ can be seen as the induced map on $H_1$ of the corresponding base pointed homotopy equivalence $\bigvee S^1\to \bigvee S^1$, which is uniquely determined. (See \cite[Ch.~9]{Pa2}.)

By (\ref{eq:dlog_zeta}), the right hand side of (\ref{eq:gamma_zeta1}) can be rewritten as
\[ \frac{t\zeta'_\varphi}{\zeta_\varphi}
=\sum_{i=0}^2(-1)^i\mathrm{Tr}\frac{t\varphi_{*i}}{1-t\varphi_{*i}}
=\sum_{i=0}^2(-1)^i\mathrm{Tr}\frac{t\varphi_{\sharp i}}{1-t\varphi_{\sharp i}}
=\sum_{i=0}^2(-1)^i\sum_{k=1}^\infty t^k\mathrm{Tr}\,\varphi_{\sharp i}^k.
\]
Hence it suffices to check the identity
\begin{equation}\label{eq:sum_ep_tr}
 \sum_{{\gamma}\atop{\mathrm{ind}\,\gamma=i}}\ve(\gamma)\,p(\gamma^\irr)\,t^{p(\gamma)}
=\sum_{k=1}^\infty t^k\mathrm{Tr}\,\varphi_{\sharp i}^k. 
\end{equation}
If $i=0$ or $2$, then $\ve(\gamma)=1$ for any AL-cycle $\gamma$ and $\varphi_{\sharp i}$ is given by a permutation matrix since an AL-cycle having a vertical segment of index 0 or 2 can not have horizontal segments. Then the identity (\ref{eq:sum_ep_tr}) is immediate. The case $i=1$ is more complicated since there may be AL-cycles that pass through $1/1$-intersections. Let $p_1,p_2,\ldots,p_N$ be the critical points of $\xi|_\Sigma$. We identify $C_1(\Sigma)$ with the free abelian group generated by $\{p_1,\ldots,p_N\}$. We define an endomorphism $\Theta(\xi):C_1(\Sigma)\to C_1(\Sigma)$ by
\[ \Theta(\xi)(p_i)=\sum_{j=1}^N N(p_i,p_j)\,p_j, \]
where $N(p_i,p_j)\in\Z$ is the count of AL-paths from $p_i$ to $tp_j$ counted with orientations as in \S\ref{ss:sign_AL}. Then by definition of $\Theta(\xi)$, we have
\[ \sum_{{\gamma}\atop{\mathrm{ind}\,\gamma=1}}\ve(\gamma)\,p(\gamma^\irr)\,t^{p(\gamma)}
=\sum_{k=1}^\infty t^k\mathrm{Tr}\,\Theta(\xi)^k. \]
Indeed, the right hand side counts AL-cycles of period $k$ with a base point on $\Sigma$. On the other hand, the sum in the left hand side is over AL-cycles without base point. The multiplicity of an AL-cycle $\gamma$ without base point in the right hand side is exactly $p(\gamma^\irr)$. This proves the identity above. Therefore, it suffices to check that $\varphi_{\sharp 1}$ agrees with $\Theta(\xi)$. This is proved in Lemma~\ref{lem:phi=Theta} below.
\end{proof}

\begin{Lem}\label{lem:phi=Theta}
$\varphi_{\sharp 1}=\Theta(\xi)$.
\end{Lem}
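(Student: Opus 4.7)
My plan is to show that both $\varphi_{\sharp 1}$ and $\Theta(\xi)$ arise as the same composition of elementary matrices, one for each subinterval $I_j$ in the decomposition $S^1=\bigcup I_j/{\sim}$ coming from the graphic of $f$. Restricting $f$ to the fiber $\Sigma=\kappa^{-1}(0)$ gives a Morse function whose index-$1$ critical points are exactly $p_1,\ldots,p_N$, and these generate $C_1(\Sigma)$. Over each $I_j$, the labeling of critical loci by $\{p_i(s)\}_{s\in I_j}$ provides a canonical identification of $C_1(\Sigma)$ with the free abelian group on the index-$1$ critical loci over any $s\in I_j$. Going once around $S^1$ and then applying the mapping-torus identification recovers $\varphi_{\sharp 1}$ on the nose, by the standard Pajitnov-type description (\cite[Ch.~9]{Pa2}) of the induced map on the Morse complex via a one-parameter family of gradients.

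Next I would compute the contribution of each piece. Over an even interval $I_{2j}$ there are no bifurcations, the critical loci $p_i(s)$ extend smoothly, and the piece contributes the identity on $C_1$; correspondingly, every AL-path in $\widetilde{\kappa}^{-1}(I_{2j})$ going from $p_i$ on the left face to an index-$1$ critical point on the right face must be a single vertical segment on the same critical locus, contributing $\delta_{ij}$ to $\Theta$ restricted to $I_{2j}$. Over an odd interval $I_{2j-1}$, only the pair $(p_{k_j},p_{k_j+1})$ exchanges levels. A standard handle-slide / Whitney-disk argument shows that, on the Morse complex, the induced map is the identity on all other $p_i$ and acts on $\{p_{k_j},p_{k_j+1}\}$ by an elementary transvection whose off-diagonal entry is the signed count of flow lines of $\xi$ between $p_{k_j}$ and $p_{k_j+1}$ that lie inside $\kappa^{-1}(I_{2j-1})$ — precisely the $1/1$-intersections of $\widetilde{\calD}_{p_{k_j}}$ with $\widetilde{\calA}_{p_{k_j+1}}$. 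These $1/1$-intersections are exactly the horizontal segments that appear in AL-paths crossing $I_{2j-1}$, so the matrix of $\Theta(\xi)$ on this slab is the same transvection.

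Finally I would multiply the contributions. Any AL-path from $p_i\in\Sigma[0]$ to $tp_j\in\Sigma[1]$ decomposes, by restriction to each slab $\widetilde{\kappa}^{-1}(I_\ell)$, into an ordered sequence of elementary contributions of the two kinds above, and the signs multiply by the coorientation convention of \S\ref{ss:ori_DA} and \S\ref{ss:coori_S} (which are set up so that each $1/1$-intersection contributes $\pm 1$ as in the classical handle-slide formula); this factorization of $\Theta(\xi)$ coincides slab-by-slab with the factorization of $\varphi_{\sharp 1}$, proving the identity.

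The main obstacle I expect is the sign bookkeeping at each level-exchange bifurcation: one must check that the sign of a $1/1$-intersection, as fixed in \S\ref{ss:sign_AL} from the orientations of $\widetilde{\calD}_p$ and $\widetilde{\calA}_q$ via \S\ref{ss:ori_DA}, agrees with the sign appearing in the transvection that represents the corresponding elementary handle slide on $C_1(\Sigma)$. This is a local check near each level-exchange point, reducible by the parametrized Morse lemma to a normal form computation in a neighborhood of the bifurcation, but it requires careful comparison of the orientation conventions used in the two definitions.
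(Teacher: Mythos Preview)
Your overall strategy---factor both sides as compositions of local contributions over the intervals $I_j$ and compare slab by slab---is exactly the paper's approach, and you correctly identify that the nontrivial content is the local sign computation at a handle slide. However, you have the roles of the odd and even intervals reversed relative to the paper's conventions, and more importantly you have conflated two distinct phenomena.

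In the paper's setup (\S\ref{ss:M2(f)}), the \emph{odd} intervals $I_{2j-1}$ contain the level-exchange bifurcations and are chosen small enough to contain \emph{no} $1/1$-intersections; the \emph{even} intervals $I_{2j}$ contain the $1/1$-intersections. A level-exchange bifurcation is merely a crossing of critical \emph{values} (so the ordering of the $p_i$ by $f_s$ changes); there is no gradient trajectory between the two critical points, and the induced map on $C_1$ is just a permutation of the ordered basis, not a transvection. A $1/1$-intersection, by contrast, is an actual flow line between two index-$1$ critical points at a single parameter value, and this is what produces an elementary transvection (handle slide). Your claim that over an even interval every AL-path from $p_i$ to an index-$1$ critical point on the other face must be a single vertical segment is therefore false: precisely when a $1/1$-intersection occurs in $I_{2j}$, there is an AL-path consisting of a vertical segment on $p_i$, a horizontal segment along the $1/1$ flow line, and a vertical segment on $p_j$, contributing the off-diagonal entry.

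The paper's proof then subdivides each even $I_{2j}$ further so that each subinterval contains exactly one $1/1$-intersection, and carries out the explicit local sign check you anticipate, using the parametrized Morse lemma in coordinates near the receiving critical point $q_0$ to verify that the sign of the $1/1$-intersection (as fixed in \S\ref{ss:sign_AL}) matches the sign in the handle-slide transvection on $C_1(\Sigma)$.
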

\begin{proof}
Recall the decomposition $S^1=\bigcup_{j=1}^{2r}I_j$ considered in \S\ref{ss:M2(f)}. The both sides of Lemma~\ref{lem:phi=Theta} can be decomposed as the compositions of corresponding morphisms $\varphi_{\sharp 1}^{I_j}$ and $\Theta^{I_j}(\xi)$ on $I_j$ from top to bottom. We check that $\varphi_{\sharp 1}^{I_j}$ and $\Theta^{I_j}(\xi)$ coincide for each $j$. 

If $j$ is odd, then there are no $1/1$-intersections in $\kappa^{-1}(I_j)$. $\varphi_{\sharp 1}^{I_j}$ and $\Theta^{I_j}(\xi)$ are given by the same permutation on the set of the critical points, totally ordered by the fiberwise Morse function $f$. So we need only to consider the case that $j$ is even. By slicing $I_j$ further into small intervals $I_{jk}=[a_{jk},b_{jk}]$ each containing just one $1/1$-intersection, it suffices to check Lemma~\ref{lem:phi=Theta} for the case when there is only one $1/1$-intersection and there is no level exchange bifurcations in $I_{jk}$. It can be seen by using the Morse lemma that a $1/1$-intersection corresponds to a slide of a 1-handle over another 1-handle, as is well known (e.g. \cite[Theorem~7.6]{Mi}). There are two possibilities for a handle-sliding, as shown in Figure~\ref{fig:1-handle-slide}. Moreover, there are four possibilities for the coorientations of the two descending manifold loci that are the cores of the two 1-handles. 
\begin{figure}
\fig{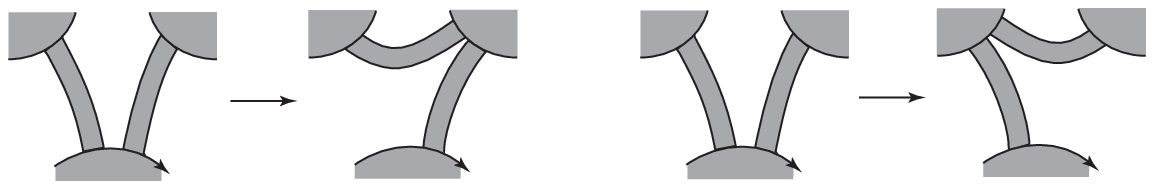}
\caption{}\label{fig:1-handle-slide}
\end{figure}

We consider the first case in Figure~\ref{fig:1-handle-slide}. The second case is similar to the first one. Let $p,q$ be critical loci of $\xi|_{M_{I_{jk}}}$ and suppose that $\wcalD_p$ slides over $\wcalD_q$ in $\xi|_{M_{I_{jk}}}$, as $s$ decreases from $b_{jk}$ to $a_{jk}$. Let $p_0\in p,q_0\in q$ be the endpoints of the flow-line of $\xi|_{M_{I_{jk}}}$ from $p$ to $q$ and take a local coordinate $(x_1,x_2,x_3)$ around $q_0$ such that
\begin{itemize}
\item the $x_2x_3$-plane agrees with the level surface $\Sigma_{q_0}$ of $\kappa$ at $q_0$,
\item $x_1$-axis points the upward direction,
\item the $x_1x_2$-plane agrees with $\wcalD_q$,
\item the $x_1x_3$-plane agrees with $\wcalA_q$.
\end{itemize}
We only consider one special case about the orientations out of the four since the other cases can be checked by the same argument as the special case. So we assume the following, applying the convention in \S\ref{ss:ori_DA}.
\[ \left\{\begin{array}{ll}
o(\calD_{q_0})_x=dx_2,&o(\wcalD_q)_x=-dx_1\wedge dx_2,\\
o(\calD_{p_0})_x=-dx_3,&o(\wcalD_p)_x=-(a\,dx_1+dx_2)\wedge (-dx_3)
\end{array}\right. \]
for a real number $a>0$. See Figure~\ref{fig:slide_local}. This gives
\[ \left\{\begin{array}{l}
o^*(\wcalD_q)_x=-dx_3,\\
o^*(\wcalD_p)_x=dx_1-a\,dx_2,
\end{array}\right. \]
and
\[ o^*(\wcalD_p)_x\wedge o^*(\wcalA_q)_x=(dx_1-a\,dx_2)\wedge (-dx_2)=-dx_1\wedge dx_2=-o(\wL)_x, \]
where $\wL$ is the level surface locus of $f$ in $\kappa^{-1}(I_{jk})$ including $x$. From this and \S\ref{ss:sign_AL}, we have
\[ \Theta^{I_{jk}}(\xi)(\gamma)=\left\{\begin{array}{ll}
p-q & \mbox{if $\gamma=p$}\\
\gamma & \mbox{other critical locus}
\end{array}\right. \]
This agrees with the homological action $\varphi_{\sharp 1}^{I_{jk}}$ of the homotopy equivalence $\varphi^{I_{jk}}:\Sigma^{(1)}(b_{jk})/\Sigma^{(0)}(b_{jk})\to \Sigma^{(1)}(a_{jk})/\Sigma^{(0)}(a_{jk})$. More precisely, for the 1-handles $h_p,h_q,h_p',h_q'$ in Figure~\ref{fig:slide_local}, we have
\[ \varphi_{\sharp 1}^{I_{jk}}([h_p])=[h_p']-[h_q']\in H_1(\Sigma^{(1)}(a_{jk}),\Sigma^{(0)}(a_{jk})). \]
\begin{figure}
\fig{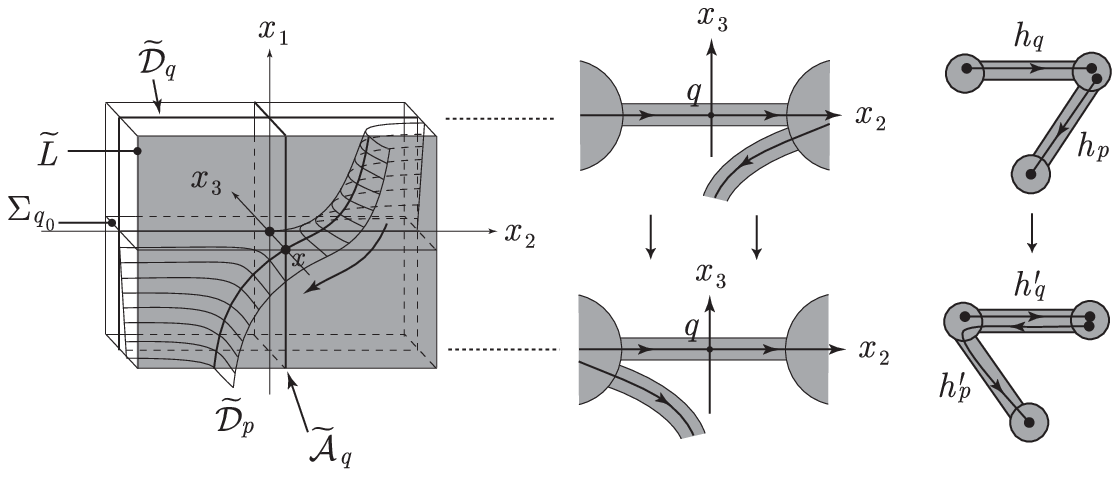}
\caption{}\label{fig:slide_local}
\end{figure}
\end{proof}

\begin{Cor}\label{cor:1}
Let $K$ be a knot in $M$ such that $\langle[d\kappa],[K]\rangle=-1$. Then
\[ [\partial Q(\wf)]=[s_{\hat{\xi}}(M)]+\frac{t\zeta'_\varphi}{\zeta_\varphi}[ST(K)] \]
in $H_3(\partial\bConf_2(\wM)_\Z)\otimes_\Lambda\Q(t)$.
\end{Cor}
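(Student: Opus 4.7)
The plan is to upgrade the chain-level identity of Theorem~\ref{thm:1} to the claimed identity in $H_3(\partial\bConf_2(\wM)_\Z)\otimes_\Lambda\Q(t)$ by (a) replacing $s_\xi^*(M)$ by the homotopic section $s_{\hat\xi}(M)$ and (b) expressing each class $[t^{p(\gamma)}ST(\gamma^\irr)]$ as a $\Q(t)$-multiple of $[ST(K)]$. The last step then reduces the sum over AL-cycles to the generating function computed in Proposition~\ref{prop:gamma_zeta}.

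First, recall from the construction of $s_\xi^*(M)$ that the piecewise smooth map $s_\xi^*(M)\to ST(M)$ is homotopic to the smooth section $s_{\hat\xi}(M)\to ST(M)$, both regarded as maps from $M$. Consequently these two chains represent the same class in $H_3(\partial\bConf_2(\wM)_\Z)\otimes_\Lambda\Q(t)$, which handles the first summand in the formula of Theorem~\ref{thm:1}.

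Next, handle the sum over AL-cycles. Because every AL-path is descending, integrating $d\kappa$ along $\bar\gamma$ yields $\widetilde{\kappa}(\gamma_{\mathrm{end}})-\widetilde{\kappa}(\gamma_{\mathrm{start}})=-p(\gamma)$, so $\langle[d\kappa],[\bar\gamma]\rangle=-p(\gamma)$ and therefore $\ve(\gamma)=1$ for every AL-cycle, and similarly $\ve(\gamma^\irr)=1$. In particular $[\bar\gamma^\irr]=p(\gamma^\irr)\,[K]$ in $H_1(M;\Q)$. Since $M$ is parallelizable, the $S^2$-bundle $ST(M)\to M$ is a product in cohomology and the Leray--Hirsch theorem gives $H_3(ST(M);\Q)\cong H_3(M;\Q)\oplus H_1(M;\Q)$, where the second summand is generated by classes of the form $[ST(C)]$ for 1-cycles $C\subset M$. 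Under this identification the assignment $C\mapsto[ST(C)]$ is linear in $[C]$, hence $[ST(\gamma^\irr)]=p(\gamma^\irr)\,[ST(K)]$ in $H_3(ST(M);\Q)$. Combined with the natural identification $H_3(\partial\bConf_2(\wM)_\Z)\otimes_\Lambda\Q(t)\cong H_3(ST(M);\Q)\otimes_\Q\Q(t)$ coming from the quoted universal coefficient argument, and the fact that a chain placed in the $p(\gamma)$-th component of $\partial\bConf_2(\wM)_\Z$ is acted on as multiplication by $t^{p(\gamma)}$, we get
\[ [t^{p(\gamma)}ST(\gamma^\irr)]=p(\gamma^\irr)\,t^{p(\gamma)}[ST(K)]. \]

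Substituting back into Theorem~\ref{thm:1} and using $\ve(\gamma)=1$,
\[ [\partial Q(\wf)]=[s_{\hat\xi}(M)]+\Bigl(\sum_\gamma (-1)^{\mathrm{ind}\,\gamma}p(\gamma^\irr)\,t^{p(\gamma)}\Bigr)[ST(K)]. \]
Since $\ve(\gamma)=1$, the parenthesized sum equals $\sum_\gamma (-1)^{\mathrm{ind}\,\gamma}\ve(\gamma)\,p(\gamma^\irr)\,t^{p(\gamma)}$, which by Proposition~\ref{prop:gamma_zeta} is exactly $\frac{t\zeta'_\varphi}{\zeta_\varphi}$, giving the desired equality. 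The main subtlety in this plan is the bookkeeping of the $\Lambda$-module structure: one must verify cleanly that the chain $t^{p(\gamma)}ST(\gamma^\irr)$ sitting in the $p(\gamma)$-th component of $\partial\bConf_2(\wM)_\Z$ really corresponds to $t^{p(\gamma)}\otimes[ST(\gamma^\irr)]$ under the isomorphism $H_3(\partial\bConf_2(\wM)_\Z)\otimes_\Lambda\Q(t)\cong H_3(ST(M);\Q)\otimes_\Q\Q(t)$, with consistent orientation conventions for $ST(\gamma^\irr)$ and $ST(K)$; everything else is formal.
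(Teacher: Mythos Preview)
Your overall approach coincides with the paper's: the paper's one-line proof says ``This follows from Theorem~\ref{thm:1}, Proposition~\ref{prop:gamma_zeta} and $[ST(\gamma^\irr)]=p(\gamma^\irr)[ST(K)]$,'' and you are unpacking exactly these three ingredients, together with the homotopy $s_\xi^*(M)\simeq s_{\hat\xi}(M)$ that the paper records just before Theorem~\ref{thm:1}. So structurally there is no divergence.

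There is, however, a genuine gap in your justification of $\ve(\gamma)=1$. Your computation $\langle[d\kappa],[\bar\gamma]\rangle=-p(\gamma)$ integrates $d\kappa$ along $\bar\gamma$ using the \emph{parametrization} direction of the AL-path (which is indeed descending). But the paper is explicit that ``an AL-cycle has an orientation that is determined by the orientations of descending and ascending manifold loci of $\widetilde{\xi}$,'' i.e.\ the orientation entering into $[\bar\gamma]$---and hence into $\ve(\gamma)$---is the Morse-theoretic one, not the parametrization direction. These need not coincide a priori; in fact, in the proof of Proposition~\ref{prop:gamma_zeta} the case $i=1$ is handled by matching $\sum_\gamma \ve(\gamma)p(\gamma^\irr)t^{p(\gamma)}$ with $\sum_k t^k\,\Tr\,\Theta(\xi)^k$, where $\Theta(\xi)$ has \emph{signed} entries coming from the $1/1$-intersection signs of \S\ref{ss:sign_AL}. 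So $\ve(\gamma)$ is not obviously identically $1$ from the descending property alone, and your argument for it is circular with respect to the paper's conventions.

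What you actually need---and what the paper simply asserts---is the homological identity $[ST(\gamma^\irr)]=p(\gamma^\irr)[ST(K)]$. This is a statement about the orientation on $ST(\gamma^\irr)$ fixed in the paragraph before Theorem~\ref{thm:1} (via $o(\Delta_{\gamma})$), and the proof of Lemma~\ref{lem:bQ_cycle} shows that this orientation is governed by $-(dx_1+dx_1')$, i.e.\ the descending direction along the critical locus. Your Leray--Hirsch step is fine once that orientation is pinned down, but you should derive it from the conventions of \S\ref{s:coori} and Lemma~\ref{lem:bQ_cycle} rather than from the descending property of AL-paths. With that correction, your detour through $\ve(\gamma)=1$ becomes unnecessary: you can pass directly from $[ST(\gamma^\irr)]=p(\gamma^\irr)[ST(K)]$ to the sum $\sum_\gamma(-1)^{\ind\gamma}\ve(\gamma)p(\gamma^\irr)t^{p(\gamma)}$ and then invoke Proposition~\ref{prop:gamma_zeta}.
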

\begin{proof}
This follows from Theorem~\ref{thm:1}, Proposition~\ref{prop:gamma_zeta} and $[ST(\gamma^\irr)]=p(\gamma^\irr)[ST(K)]$.
\end{proof}

\begin{appendix}

\mysection{Homology of a 3-manifold fibered over $S^1$}{s:appendix_1}

\begin{Lem}\label{lem:homology-mtorus}
Let $M$ be a closed, connected, oriented 3-manifold that is the mapping torus of a diffeomorphism $\varphi:\Sigma\to \Sigma$ of a closed, connected, oriented surface $\Sigma$. Then $b_1(M)=1$ if and only if $\varphi_{*1}:H_1(\Sigma;\Q)\to H_1(\Sigma;\Q)$ does not have eigenvalue $1$. More precisely, $b_1(M)=\dim{V_1}+1$, where $V_1\subset H_1(\Sigma;\Q)$ is the eigenspace of $\varphi_{*1}$ associated with eigenvalue $1$.
\end{Lem}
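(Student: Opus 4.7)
The plan is to apply the Wang exact sequence of the fibration $\Sigma\hookrightarrow M\to S^1$. Since $M$ is the mapping torus of $\varphi$, the Leray--Serre spectral sequence of this fibration collapses into the long exact sequence
\[
 \cdots\to H_i(\Sigma;\Q)\xrightarrow{\varphi_{*i}-\mathrm{id}}H_i(\Sigma;\Q)\to H_i(M;\Q)\to H_{i-1}(\Sigma;\Q)\xrightarrow{\varphi_{*(i-1)}-\mathrm{id}}H_{i-1}(\Sigma;\Q)\to\cdots,
\]
which one can also derive by Mayer--Vietoris applied to the decomposition of $M$ obtained by cutting along two transverse fibers. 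Extracting the portion around degree $1$ gives the exact sequence
\[
 H_1(\Sigma;\Q)\xrightarrow{\varphi_{*1}-\mathrm{id}}H_1(\Sigma;\Q)\to H_1(M;\Q)\to H_0(\Sigma;\Q)\xrightarrow{\varphi_{*0}-\mathrm{id}}H_0(\Sigma;\Q).
\]

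Next, because $\Sigma$ is connected, $\varphi_{*0}=\mathrm{id}$, so the last map is zero. Hence the displayed sequence yields a short exact sequence
\[
 0\to \mathrm{coker}(\varphi_{*1}-\mathrm{id})\to H_1(M;\Q)\to H_0(\Sigma;\Q)\to 0,
\]
and the right-hand term has $\Q$-dimension $1$.

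Finally, for any linear endomorphism $T$ of a finite-dimensional $\Q$-vector space, $\dim\ker T=\dim\mathrm{coker}\,T$. Applying this to $T=\varphi_{*1}-\mathrm{id}$, whose kernel is by definition the eigenspace $V_1$ associated with eigenvalue $1$, one gets
\[
 \dim\mathrm{coker}(\varphi_{*1}-\mathrm{id})=\dim V_1,
\]
so the short exact sequence above gives $b_1(M)=\dim V_1+1$. The equivalence $b_1(M)=1\iff V_1=0\iff \varphi_{*1}$ has no eigenvalue $1$ follows immediately. The only mild point to check is the validity of the Wang sequence with $\Q$-coefficients for a smooth mapping torus, which is standard; beyond that the argument is purely linear algebra.
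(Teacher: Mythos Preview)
Your proof is correct and is essentially the same argument as the paper's: the paper invokes the short exact sequence $0\to H_0(S^1;H_1(\Sigma;\Q))\to H_1(M;\Q)\to H_1(S^1;\Q)\to 0$ with local coefficients (from the Serre spectral sequence, citing \cite{DK}) and then computes the twisted $H_0$ via the cellular boundary $\partial_1=\varphi_{*1}-1$, which is exactly your $\mathrm{coker}(\varphi_{*1}-\mathrm{id})$. Your Wang sequence is the same spectral-sequence output in slightly different packaging, and the remaining linear algebra step is identical.
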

\begin{proof}
Let $\kappa:M\to S^1$ be the projection of the fibration. By \cite[Corollary~9.14]{DK}, there is an exact sequence
\[ 0\to H_0(S^1;H_1(\Sigma;\Q))\to H_1(M;\Q)\stackrel{\kappa_*}{\to} H_1(S^1;\Q)\to 0, \]
where the action of $\pi_1S^1$ on the local coefficient $H_1(\Sigma;\Q)$ is given by $\varphi_{*1}$. Hence it follows that the map $\kappa_*$ is an isomorphism if and only if the twisted homology $H_0(S^1;H_1(\Sigma;\Q))$ vanishes. Moreover, this condition is equivalent to the condition that the boundary homomorphism $\partial_1:C_1(S^1;H_1(\Sigma;\Q))\to C_0(S^1;H_1(\Sigma;\Q))$ is surjective. Decompose $S^1$ into one 0-cell $v$ and one 1-cell $c$ and consider the groups $C_0(S^1;H_1(\Sigma;\Q))$ and $C_1(S^1;H_1(\Sigma;\Q))$ as the cellular chain complex for this decomposition. For $a_c\in H_1(\Sigma_v;\Q)$, $\Sigma_v=\kappa^{-1}(v)$, we have
\[ \partial_1(a_c c)=\varphi_{*1}(a_c)v-a_cv=(\varphi_{*1}-1)(a_c)v. \]
Now the surjectivity (or equivalently, bijectivity) of $\partial_1$ is equivalent to $\det(\varphi_{*1}-1)\neq 0$. This completes the proof.
\end{proof}

In \cite{Ri}, it is shown that there are many closed 3-manifolds $M$ fibered over $S^1$ with $b_1(M)=1$. 

\mysection{Some facts on smooth manifolds with corners}{s:mfd_corners}

We follow the convention in \cite[Appendix]{BT} for manifolds with corners, smooth maps between them and their transversality. We write down some necessary terms from \cite[Appendix]{BT}, some of which are specialized than those in \cite[Appendix]{BT}.
\begin{Def}\label{def:mfd_corners}
\begin{enumerate}
\item A map between manifolds with corners is {\it smooth} if it has a local extension, at any point of the domain, to a smooth map from a manifold without boundary, as usual. 
\item Let $Y,Z$ be smooth manifolds with corners, and let $f:Y\to Z$ be a bijective smooth map. This map is a {\it diffeomorphism} if both $f$ and $f^{-1}$ are smooth.
\item Let $Y,Z$ be smooth manifolds with corners, and let $f:Y\to Z$ be a smooth map. This map is {\it strata preserving} if the inverse image by $f$ of a connected component $S$ of a stratum of $Z$ is a union of connected components of strata of $Y$. 
\item Let $X,Y$ be smooth manifolds with corners and $Z$ be a smooth manifold without boundary. Let $f:X\to Z$ and $g:Y\to Z$ be smooth maps. Say that $f$ and $g$ are {\it (strata) transversal} when the following is true: Let $U$ and $V$ be connected components in stratums of $X$ and $Y$ respectively. Then $f:U\to S$ and $g:V\to S$ are transversal.
\end{enumerate}
\end{Def}
We use the following proposition, which is a corollary of \cite[Proposition~A.5]{BT}.
\begin{Prop}\label{prop:BT}
Let $X,Y$ be smooth manifolds with corners and $Z$ be a smooth manifold without boundary. Let $f:X\to Z$ and $g:Y\to Z$ be smooth maps that are transversal. Then the fiber product
\[ X\times_Z Y=\{(x,y);f(x)=g(y)\}\subset X\times Y\]
is a smooth manifold with corners, whose strata have the form $U\times_Z V$ where $U\subset X$ and $V\subset Y$ are strata.
\end{Prop}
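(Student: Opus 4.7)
My plan is to deduce this as a direct corollary of [BT, Proposition~A.5], which establishes the analogous fiber product statement allowing corners on all three manifolds $X$, $Y$, $Z$. The key observation is that when $Z$ has no boundary, $Z$ is a single stratum of itself, so the general BT strata-transversality hypothesis collapses: all the corner-compatibility conditions relating boundary faces of $Z$ to those of $X$ and $Y$ are vacuous, and what remains is precisely the condition of Definition~\ref{def:mfd_corners}(4), namely that $f|_U$ and $g|_V$ are transversal for every pair of strata $U\subset X$, $V\subset Y$. With the hypothesis so matched, [BT, Proposition~A.5] immediately yields that $X\times_Z Y$ is a smooth manifold with corners with strata $U\times_Z V$.

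If one prefers a self-contained argument, I would proceed locally at each point $(x,y)\in X\times_Z Y$. Choose adapted corner charts representing $X$ near $x$ as an open subset of $\R^a_{\geq 0}\times\R^{\dim X-a}$, and $Y$ near $y$ as an open subset of $\R^b_{\geq 0}\times\R^{\dim Y-b}$; since $Z$ is boundaryless, pick an ordinary chart on $Z$ around $f(x)=g(y)$. Extend $f$ and $g$ smoothly to the ambient Euclidean neighborhoods (possible by definition of smoothness on manifolds with corners), obtaining extensions $\widetilde f,\widetilde g$. Transversality of the restrictions of $f$ and $g$ to the strata through $(x,y)$ forces transversality of $\widetilde f$ and $\widetilde g$ at $(x,y)$, so the classical transversal fiber product theorem gives a smooth submanifold local model $\widetilde X\times_Z\widetilde Y$. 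Intersecting with the half-space inequalities $x_i\geq0$ $(1\leq i\leq a)$ and $y_j\geq0$ $(1\leq j\leq b)$ recovers a corner chart on $X\times_Z Y$, and the face structure is visibly the expected $U\times_Z V$.

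The main subtlety lies at points $(x,y)$ on positive-codimension strata of $X\times Y$: one must confirm that stratumwise transversality, rather than merely transversality at the open stratum through $(x,y)$, forces the half-space inequalities to cut the smooth model $\widetilde X\times_Z\widetilde Y$ cleanly in every codimension, producing a corner chart without creating tangential or singular intersections between the smooth local model and the boundary faces. This compatibility is exactly the nontrivial content absorbed from [BT, Proposition~A.5], and is what the stratumwise transversality hypothesis in Definition~\ref{def:mfd_corners}(4) is designed to provide.
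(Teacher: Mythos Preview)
Your proposal is correct and matches the paper's approach exactly: the paper does not give an independent proof but simply states this proposition as a corollary of \cite[Proposition~A.5]{BT}. Your additional explanation of why the boundaryless hypothesis on $Z$ collapses the BT transversality conditions to those of Definition~\ref{def:mfd_corners}(4), together with the optional local-chart sketch, goes beyond what the paper provides but is entirely consistent with it.
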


If $f,g$ are inclusions then $X\times_Z Y=(X\times Y)\cap \Delta_Z=\Delta_{X\cap Y}$, which is canonically diffeomorphic to $X\cap Y$. Thus we obtain the following corollary.

\begin{Cor}\label{cor:BT2}
Let $X,Y$ be smooth manifolds with corners that are submanifolds of a smooth manifold $Z$ without boundary. Suppose that the inclusions $X\to Z$ and $Y\to Z$ are transversal. Then the intersection $X\cap Y$ is a smooth manifold with corners, whose strata have the form $U\cap V$ where $U\subset X$ and $V\subset Y$ are strata.
\end{Cor}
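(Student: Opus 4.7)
The plan is to deduce Corollary~\ref{cor:BT2} directly from Proposition~\ref{prop:BT} by specializing to the case when the two maps into $Z$ are inclusions. Let $f:X\hookrightarrow Z$ and $g:Y\hookrightarrow Z$ denote the inclusions, which are smooth maps of manifolds with corners into a smooth manifold without boundary, and are transversal by hypothesis. Proposition~\ref{prop:BT} then guarantees that the fiber product
\[
X\times_Z Y=\{(x,y)\in X\times Y;\ f(x)=g(y)\}\subset X\times Y
\]
is a smooth manifold with corners, whose strata have the form $U\times_Z V$ for strata $U\subset X$ and $V\subset Y$.

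Next I would identify this fiber product with the intersection $X\cap Y$. Since $f$ and $g$ are inclusions, the defining condition $f(x)=g(y)$ forces $x=y\in X\cap Y$. Thus $X\times_Z Y$ coincides with the diagonal $\Delta_{X\cap Y}=\{(z,z);\ z\in X\cap Y\}\subset X\times Y$, and the restriction of the projection $\mathrm{pr}_1:X\times Y\to Z$ to this diagonal gives a canonical bijection $\Delta_{X\cap Y}\to X\cap Y$. I would check that this bijection is a diffeomorphism of topological spaces equipped with the structure of manifolds with corners: smoothness in one direction follows because $\mathrm{pr}_1$ is smooth, while smoothness in the other direction follows because the map $z\mapsto (z,z)$ is the restriction of the smooth diagonal map $Z\to Z\times Z$. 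Hence $X\cap Y$ inherits from $X\times_Z Y$ the structure of a smooth manifold with corners.

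Finally, to identify the strata, I would transport the stratification of $X\times_Z Y$ through the diffeomorphism above. Under the identification $(z,z)\leftrightarrow z$, a stratum of the form $U\times_Z V$ corresponds exactly to $\{z;\ z\in U\text{ and }z\in V\}=U\cap V$, where $U\subset X$ and $V\subset Y$ are strata. Thus each stratum of $X\cap Y$ has the form $U\cap V$ as asserted.

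I do not anticipate a serious obstacle: the entire argument is a direct specialization of Proposition~\ref{prop:BT} combined with the elementary identification of a fiber product of inclusions with an intersection. The only minor point to verify carefully is the diffeomorphism $X\times_Z Y\cong X\cap Y$ as manifolds with corners, but this is immediate from the smoothness of the diagonal embedding and the projections.
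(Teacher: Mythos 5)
Your proposal is correct and follows essentially the same route as the paper: the paper also deduces the corollary by specializing Proposition~\ref{prop:BT} to inclusions and identifying $X\times_Z Y=(X\times Y)\cap\Delta_Z=\Delta_{X\cap Y}$, which is canonically diffeomorphic to $X\cap Y$, with the strata transported accordingly.
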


\section{\bf Blow-up}\label{s:blow-up}

\subsection{Blow-up of $\R^i$ along the origin}

Let $\wgamma^1(\R^i)$ denote the total space of the tautological oriented half-line ($[0,\infty)$) bundle over the oriented Grassmannian $\widetilde{G}_1(\R^i)\cong S^{i-1}$. Namely, 
\[ \wgamma^1(\R^i)=\{(x,y)\in S^{i-1}\times \R^i; \exists t\in[0,\infty), y=tx\}. \]
Then the tautological bundle is trivial and that $\wgamma^1(\R^i)$ is diffeomorphic to $S^{i-1}\times [0,\infty)$. We put 
\[ B\ell_0(\R^i)=\wgamma^1(\R^i) \]
and call $B\ell_0(\R^i)$ the {\it blow-up} of $\R^i$ along $0$. Let $\pi:\wgamma^1(\R^i)\to \R^i$ be the map defined by $\pi=\mathrm{pr}_2\circ \varphi$ in the following commutative diagram:
\[ \xymatrix{
	\wgamma^1(\R^i) \ar[r]^-{\varphi} \ar[rd]_-{\pi}&
		S^{i-1}\times \R^i  \ar[d]^-{\mathrm{pr}_2} \ar[r]^-{\mathrm{pr}_1} & S^{i-1}\\
	& \R^i &
	}\]
where $\varphi:\wgamma^1(\R^i)\to S^{i-1}\times\R^i$ is the inclusion. We call $\pi$ the {\it projection} of the blow-up. Here, $\pi^{-1}(0)=\partial\wgamma^1(\R^i)$ is the image of the zero section of the tautological bundle $\mathrm{pr}_1\circ \varphi:\wgamma^1(\R^i)\to S^{i-1}$ and is diffeomorphic to $S^{i-1}$. 
\begin{Lem}\label{lem:bl_extention}\begin{enumerate}
\item The restriction of $\pi$ to the complement of $\pi^{-1}(0)=\partial\wgamma^1(\R^i)$ is a diffeomorphism onto $\R^i\setminus\{0\}$. 
\item The restriction of $\varphi$ to the complement of $\pi^{-1}(0)$ has the image in $S^{i-1}\times \R^i$ whose closure agrees with the full image of $\varphi$ from $\wgamma^1(\R^i)$. 
\end{enumerate}
\end{Lem}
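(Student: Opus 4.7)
\emph{Plan of proof.} Both assertions are elementary and follow by unwinding the definition $\wgamma^1(\R^i) = \{(x,y) \in S^{i-1}\times\R^i : \exists\, t\in[0,\infty),\ y = tx\}$ together with the observation that for each $(x,y)$ in this set the scalar $t$ with $y = tx$ is unique (since $x \in S^{i-1}$ has unit length, we must have $t = \|y\|$).

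For part (1), I would argue as follows. The set $\pi^{-1}(0)$ consists of pairs $(x,y)$ with $y = 0$, which forces $t = 0$; conversely every $(x,0)$ with $x \in S^{i-1}$ lies in $\wgamma^1(\R^i)$. Hence $\pi^{-1}(0) = S^{i-1}\times\{0\} = \partial\wgamma^1(\R^i)$. On the complement, any $(x,y)$ has $y \neq 0$, and from $y = tx$ with $t \geq 0$ one reads off $t = \|y\| > 0$ and $x = y/\|y\|$. Thus $\pi$ restricted to $\wgamma^1(\R^i)\setminus\pi^{-1}(0)$ is a bijection onto $\R^i\setminus\{0\}$ with explicit inverse $y\mapsto (y/\|y\|,\,y)$, which is smooth on $\R^i\setminus\{0\}$. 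Since $\pi$ is clearly smooth (it is the restriction of the projection $\mathrm{pr}_2$), this yields a diffeomorphism.

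For part (2), the image of $\varphi$ restricted to the complement of $\pi^{-1}(0)$ is $A := \{(x,tx) : x \in S^{i-1},\ t > 0\}\subset S^{i-1}\times\R^i$, while the full image of $\varphi$ is $\overline{A} \cup (S^{i-1}\times\{0\})$ at most, namely $B := \{(x,tx): x\in S^{i-1},\ t\geq 0\}$. To identify $\overline{A}$ with $B$, note that $B\setminus A = S^{i-1}\times\{0\}$, and for any $(x,0) \in S^{i-1}\times\{0\}$ the sequence $(x,x/n)\in A$ converges to $(x,0)$ as $n \to \infty$; hence $S^{i-1}\times\{0\}\subset \overline{A}$, so $\overline{A} \supset B$. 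The reverse inclusion $\overline{A}\subset B$ holds because $B$ is closed in $S^{i-1}\times\R^i$: it is the preimage of $\{0\}$ under the continuous map $(x,y)\mapsto y - \langle y, x\rangle x$ intersected with the closed half-space $\{(x,y): \langle y,x\rangle \geq 0\}$. Therefore $\overline{A} = B = \mathrm{Im}\,\varphi$.

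Neither step presents a genuine obstacle; the only thing to be careful about is distinguishing boundary and interior of $\wgamma^1(\R^i)$ using the sign condition $t\geq 0$, which is what ensures $\pi^{-1}(0)$ is precisely the boundary sphere and that the closure of $A$ in $S^{i-1}\times\R^i$ does not accrue spurious points $(x,tx)$ with $t < 0$.
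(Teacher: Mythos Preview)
Your proof is correct. The paper states this lemma without proof, treating both assertions as immediate from the definitions, so your argument simply supplies the routine details that the paper omits.
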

\subsection{Blow-up along a submanifold}

When $d>i\geq 0$, we put
\[ B\ell_{\R^i}(\R^d)=\wgamma^1(\R^i)\times \R^{d-i} \]
(the blow-up of $\R^d$ along $\R^i$) and define the projection $\varpi:B\ell_{\R^i}(\R^d)\to \R^d$ by $\pi\times\mathrm{id}_{\R^{d-i}}$. This can be straightforwardly extended to the blow-up $B\ell_X(Y)$ of a manifold $Y$ along a submanifold $X$ having oriented normal bundle, by replacing the normal bundle with the associated $\wgamma^1(\R^d)$-bundle over $X$. 

\end{appendix}

\section*{\bf Acknowledgments.}
This work was started during my stay at the Institut Fourier of the University of Grenoble. I am grateful to the Institut for hospitality. Especially, I would like to express my sincere gratitude to Professor Christine Lescop for kindly helping me understand her work and for helpful conversations. I would like to thank Professor Osamu Saeki for helpful comments. I have been supported by JSPS Grant-in-Aid for Young Scientists (B) 70467447 and by the JSPS International Training Program organized by Department of Mathematics, Hokkaido University.

\end{document}